\newcommand{\icml}[2]{\ifthenelse{\boolean{icml_template}}{#1}{#2}}
\newcommand{\RUNTITLE}{Sampling-based Nystr{\"o}m Approximation and
Kernel Quadrature}
\newcommand{\TITLE}{Sampling-based Nystr{\"o}m Approximation and Kernel Quadrature}
\newtheorem{thm}{Theorem}[]
\newtheorem*{thm*}{Theorem}
\newtheorem{m-thm}[thm]{Meta-Theorem}
\newtheorem*{m-thm*}{Meta-Theorem}
\newtheorem{lem}{Lemma}[]
\newtheorem{remark}{Remark}[]
\newenvironment{rem}{\begin{remark}\rm}{\end{remark}}
\newtheorem{prop}{Proposition}[]
\newtheorem*{prop*}{Proposition}
\newtheorem{Definition}{Definition}
\newtheorem{Corollary}[]{Corollary}
\newenvironment{cor}{\begin{Corollary}}{\end{Corollary}}
\newtheorem{Example}[]{Example}
\newtheorem{algor}[thm]{Method}
\newtheorem{Condition}[thm]{Condition}
\newcommand{\Hil}{\mathcal{H}}
\newcommand{\R}{\mathbb{R}}
\newcommand{\dd}{\,\mathrm{d}}
\newcommand{\ve}{\varepsilon}
\newcommand{\F}{\mathcal{F}}
\renewcommand{\phi}{\varphi}
\newcommand{\X}{\mathcal{X}}
\newcommand{\bm}[1]{{\mbox{\boldmath $#1$}}}
\newcommand{\lmid}{\,\middle|\,}
\newcommand{\E}[1]{\mathbb{E}\!\left[#1\right]}
\renewcommand{\P}[1]{\mathbb{P}\!\left(#1\right)}
\newcommand{\ord}[1]{\mathcal{O}\!\left(#1\right)}
\newcommand{\K}{\mathcal{K}}
\newcommand{\ip}[1]{\left\langle #1 \right\rangle}
\newcommand{\vertiii}[1]{{\left\vert\kern-0.25ex\left\vert\kern-0.25ex\left\vert #1 
    \right\vert\kern-0.25ex\right\vert\kern-0.25ex\right\vert}}
\renewcommand{\tilde}{\widetilde}
\DeclareMathOperator{\wce}{wce}
\theoremstyle{plain}
\theoremstyle{definition}
\icmltitlerunning{\RUNTITLE}
\begin{document}

\twocolumn[
\icmltitle{\TITLE}





\begin{icmlauthorlist}
\icmlauthor{Satoshi Hayakawa}{o}
\icmlauthor{Harald Oberhauser}{o}
\icmlauthor{Terry Lyons}{o}
\end{icmlauthorlist}

\icmlaffiliation{o}{Mathematical Institute, University of Oxford,
Oxford, United Kingdom}

\icmlcorrespondingauthor{Satoshi Hayakawa}{hayakawa@maths.ox.ac.uk}

\icmlkeywords{Machine Learning, ICML}

\vskip 0.3in
]



\printAffiliationsAndNotice{}  
}{
    \input{others/header_neurips} 
}

\renewcommand{\H}{\mathcal{H}}
\newcommand{\N}{\mathcal{N}}
\newcommand{\mmd}{\mathrm{MMD}}


\begin{abstract}
    We analyze the Nystr{\"o}m approximation
    of a positive definite
    kernel associated with a probability measure.
    We first prove an improved error bound for the conventional Nystr{\"o}m approximation with i.i.d.~sampling and singular-value decomposition in the continuous regime; the proof techniques are borrowed from statistical learning theory.
    We further introduce a refined selection of subspaces in
    Nystr{\"o}m approximation with theoretical guarantees
    that is applicable to non-i.i.d.~landmark points.
    Finally, we discuss their application to convex kernel quadrature
    and give novel theoretical guarantees as well as numerical observations.
\end{abstract}

\begin{table*}[!t]
    \centering
    \renewcommand{\arraystretch}{1.5}
    \caption{Main quantitative results. Individual bounds are available in
    Remark~\ref{rem:result}, Theorem~\ref{thm:k-s-mu-z-decay}, and Proposition~\ref{prop:emp-eig-decay-x}.
    For the explanation on each kernel, see at the end of {\bf Contribution} section.
    Here are remarks on the notation.
    (a) $\sigma_i$ is the $i$-th eigenvalue of the integral operator
    $\K:L^2(\mu)\to L^2(\mu); g\mapsto \int_\X k(\cdot, x)g(x)\dd\mu(x)$.
    (b) $\mu_X$ denotes the equally weighted empirical measure
    $\frac1N\sum_{i=1}^N\delta_{x_i}$ given by $X=(x_i)_{i=1}^N$.
    (c) $\mu(\cdot)$ and $\mu_X(\cdot)$ denote the integrals over the diagonal. See \eqref{eq:notation-measure}.
    }
    \label{table}
    \begin{tabular}{cccc}
        Quantity & Bound & Assumption \\\hline
        \begin{tabular}{c}
            $\mathbb{E}[\mu(\sqrt{k - k_s^Z})]$ \\
            $\mathbb{E}[\mu(k - k_s^Z)]$
        \end{tabular}
        & $\mathcal{O}\Biggl(
            \displaystyle\sqrt{\sum_{i>s}\sigma_i} + \frac{(\log\ell)^{2d+1}}\ell
            \Biggr)$
        & $\begin{cases}
            Z\sim_\mathrm{iid}\mu,\ \text{$k$: bounded} \\
            \sigma_i\lesssim \exp(-\beta i^{1/d})
        \end{cases}$
        \\
        \begin{tabular}{c}
            $\bigl( \mu(\sqrt{k^Z - \smash[b]{k_{s, \mu}^Z}})^2 \le \bigr)$
            $\mu(k^Z - k_{s, \mu}^Z)$ \\
            $\bigl(\mathbb{E}[\mu_X(\sqrt{k^Z - \smash[b]{k_{s, X}^Z}})]^2 \le \bigr)$
            $\mathbb{E}[\mu_X(k^Z - k_{s, X}^Z)]$
        \end{tabular}
         & $\displaystyle\sum_{i>s}\sigma_i$ &
        \begin{tabular}{c}
             $Z$: fixed\\
             $Z$: fixed, $X\sim_\mathrm{iid}\mu$
        \end{tabular}
    \end{tabular}
\end{table*}

\section{Introduction}
Kernel methods form a prominent part among modern machine learning tools. 
However, making kernel methods scalable to large datasets is an ongoing challenge. 
The main bottleneck is that the kernel Gram matrix scales quadratically in the number of data points. 
For large scale problems the number of matrix entries can easily be of the order hundred-thousands or millions so that even storing the full Gram matrix can become too costly. 
Several approaches have been developed to deal with these, among the most prominent are the Random Fourier Features and the Nystr\"om method. 
In this article, we revisit and generalize the Nystr{\"o}m method and provide new error estimates.
Consequences are theoretical guarantees for kernel quadrature and improvements on the standard Nystr{\"o}m method that go beyond uniform subsampling of data points.

\paragraph{Nystr{\"o}m Approximation.}
The main idea of the Nystr{\"o}m method is to replace the original kernel $k$ by another kernel $k_\mathrm{app}$ that is constructed by random projection of the elements in the (in general infinite-dimensional) RKHS associated with $k$ into a low-dimensional RKHS.
A consequence of this is that the Gram matrix of $k_\mathrm{app}$ is a low-rank approximation of the original Gram matrix. 
Concretely, let $\mu$ denote a probability measure on a (Hausdorff) space $\X$ and $k$ a kernel on $\X$; then the standard Nystr{\"o}m approximation uses the random kernel 
\begin{equation}
    k^Z(x, y) := k(x, Z)k(Z, Z)^+k(Z, y).
    \label{eq:def-k-z}
\end{equation}
where $Z=(z_i)_{i=1}^\ell$ is an $\ell$-point subset of $\X$ usually taken i.i.d.~from $\mu$
\citep{dri05,kum12}.

\paragraph{Further $\boldsymbol{s}$-rank Approximation.}
While less common,
the following rank-reduced version is of our interest:
\begin{align}\label{eq:nys}
    k_\mathrm{app}(x,y)= k^Z_s(x, y):= k(x, Z)k(Z, Z)^+_sk(Z, y),
\end{align}
where $k(Z, Z)^+_s$ is the Moore--Penrose pseudo-inverse of the best $s$-rank approximation of the Gram matrix
$k(Z, Z)=(k(z_i, z_j))_{i,j=1}^\ell$ with $s\le \ell$.
Note that $k_\ell^Z = k^Z$.

Our motivation for this rank reduction comes from kernel-based numerical integration.
Indeed, if we are given an $s$-rank kernel $k_\mathrm{app}$ and a probability measure $\mu$,
by Tchakaloff's theorem
there is a discrete probability measure $\nu$ supported over at most $s+1$ points
satisfying $\int_\X f\dd\mu = \int_\X f\dd\nu$ for all $f\in\H_{k_\mathrm{app}}$,
where $\H_{k_\mathrm{app}}$ is the finite-dimensional RKHS associated with the kernel $k_\mathrm{app}$.
Such a measure $\nu$ works as a kernel quadrature rule if the $k_\mathrm{app}$ well approximates the original kernel $k$,
and the rank $s$ directly affects the number of (possibly expensive) function evaluations we need to estimate each integral.
The primary error criterion in this paper is
\begin{align}
&\int_\X\sqrt{k(x, x) - k_\mathrm{app}(x, x)}\dd\mu(x)\label{eq:sqrt error},
\end{align}
which arises from the error estimate in kernel/Bayesian quadrature
\citep{hayakawa21b,ada22}.

\paragraph{Contribution.}
Our first theoretical result is that the expectation of \eqref{eq:sqrt error} is of the order 
$\ord{\sqrt{\sum_{i>s}\sigma_i} + \mathrm{polylog}(\ell)/\ell}$
when the eigenvalues $(\sigma_i)_{i=1}^\infty$ of the kernel integral operator induced by $(k,\mu)$ enjoy exponential convergence (the expectation is taken over the empirical sample $Z$).
Key to the proof of this bound is the use of concepts from statistical learning theory; in particular, the (local) Rademacher complexity.
This error estimate is far better than the bound $\ord{\text{spectral term} + s^{1/2}/\ell^{1/4}}$
that follows from the existing high-probability estimate
$\int_\X (k(x, x)-k_s^Z(x, x))\dd\mu(x)=\mathcal{O}(s\sigma_s + \sum_{i>s}\sigma_i
+ s/\sqrt{\ell})$
\citep[][Corollary 4]{hayakawa21b}.
By combining our new bound with known kernel quadrature estimates this explains the strong empirical performance of the random kernel quadrature, see \citet{hayakawa21b}; previously the theoretical bounds were not even better than Monte-Carlo
in terms of $\ell$.

Our second contribution
is the use of other $k_\mathrm{app}$ than $k_s^Z$
with better bounds of \eqref{eq:sqrt error},
for a general class of landmark points $Z$
rather than just an i.i.d.~sample from $\mu$.
This generalization allows to use other sets $Z$ in \eqref{eq:nys} to
achieve better overall performance; e.g.~sampling $Z$ from determinantal point processes (DPPs) on $\X$ is known to be advantageous in applications.
To construct and provide theoretical guarantees for such improved Nystr{\"o}m constructions we revisit and generalize a method that was proposed in \citet{san16} and give further theoretical guarantees
applicable to kernel quadrature rules.

The following is the list of low-rank approximations presented in the paper:
\begin{itemize}
    \item $k^Z$ and $k^Z_s$: Usual Nystr{\"o}m approximations using landmark points $Z$.
    See \eqref{eq:def-k-z} and \eqref{eq:nys}.
    \item $k_{s,\mu}^Z$: The $s$-rank truncated Mercer decomposition of
    the kernel $k^Z$ with respect to the measure $\mu$.
    See \eqref{eq:k-s-mu-formal}.
    \item $k_{s, X}^Z$: A version of $k_{s,\mu}^Z$ with $\mu$
    given by the empirical measure $\frac1N
    \sum_{i=1}^N \delta_{x_i}$ of the set $X=(x_i)_{i=1}^N$.
    This actually coincides with $k_s^Z$ when $X=Z$; see \eqref{eq:mu-z-mercer-empirical}.
\end{itemize}
See Table~\ref{table} for a summary of our quantitative results.

\paragraph{Outline.}
Section \ref{sec:literature} discusses the existing literature and introduces some notation.
Section \ref{sec:main estimate} contains our first main result,
namely the analysis of $k_s^Z$ for an i.i.d.~$Z$;
Appendix \ref{sec:stat-learn} provides the necessary background from statistical learning theory. 
In Section \ref{sec:new LR}, we then treat a general $Z$
to give refined low-rank approximations together with theoretical guarantees, rather than the conventional $k_s^Z$.
In Section \ref{sec:kernel quadrature}, we discuss how our bounds yields
new theories and methods for the recent random kernel quadrature construction, which enables us to explain the empirical performance
as well as to build some strong candidates
whose performance is assessed by numerical experiments.
All the omitted proofs are given in Appendix~\ref{sec:proofs}.

\section{Related Literature and Notation}\label{sec:literature}
To simplify the notation, we denote
\begin{align}
    \nu(f) := \int_\X f(x)\dd\nu(x),
    \ \ \ 
    \nu(h) := \int_\X h(x, x)\dd\nu(x)
    \label{eq:notation-measure}
\end{align}
for any functions $f:\X\to\R$, $h:\X\times\X\to\R$ and a (probability) measure $\nu$ on $\X$, whenever the integrals are well-defined.
In this notation, the aim of this paper is to bound
$\mu(\sqrt{k - k_\mathrm{app}})$
or $\mu(k - k_\mathrm{app})$
for a class of low-rank approximation $k_\mathrm{app}$.
Also, $A^+$ denotes the Moore--Penrose pseudo-inverse of a matrix $A$.

\paragraph{Approximation of the Gram Matrix.}
The standard use of the Nystr{\"o}m method in ML is to replace the Gram matrix $k(X, X)$ for a set $X = (x_i)_{i=1}^N$ by the low-rank matrix $k^Z(X,X)$ where $k^Z$ is defined as in \eqref{eq:def-k-z}.
A well-developed literature studies the case when $Z=(z_i)_{i=1}^\ell$ is uniformly and independently sampled from $X$, see \citet{dri05,kum12,yan12,jin13,li15}. 
Further, the cases of leverage-based sampling \citep{git16}, DPPs \citep{li16}, and kernel $K$-means samples \citep{ogl17} have received attention. 
Moreover, two variants of the standard Nystr{\"o}m method have been studied:
the first replaces the Moore-Penrose inverse of $k(Z,Z)$ in \eqref{eq:def-k-z} with the pseudo-inverse of the best $s$-rank approximation of $k(Z, Z)$ as in \eqref{eq:nys} via SVD \citep{dri05,kum12,li15}; the second uses the best $s$-rank approximation of $k^Z(X,X)$, see \citep{tro17,wan19}.
For a brief overview in this regard, see \citet[][Remark 1]{wan19}.
\paragraph{Approximation of the Integral Operator.}
The matrix $k(X,X)$ can be regarded as a finite-dimensional representation of the linear (integral) operator
\[
  \K: L^2(\mu)\to L^2(\mu),\quad  (\K f)(x) = \int_\X k(x, y)f(y)\dd\mu(y).
\]
We denote with $(\sigma_i, e_i)_{i=1}^\infty$ the eigenpairs of the operator $\K$, and assume the eigenvalues are ordered $\sigma_1\ge\sigma_2\ge\cdots\ge0$.
The Mercer decomposition exists under mild assumptions (for example, $\mathop\mathrm{supp}\mu=\X$, $k$ is continuous and $\int_\X k(x, x)\dd\mu(x)<\infty$ \citep{ste12} are sufficient) and gives the representation
\begin{equation}
    k(x, y) = \sum_{i=1}^\infty \sigma_i e_i(x)e_i(y),
    \label{eq:mu-mercer}
\end{equation}
where $\lVert e_i\rVert_{L^2(\mu)}=1$, and $(\sqrt{\sigma_i}e_i)_{i=1}^\infty$ is an orthonormal basis of the RKHS $\H_k$ of $k$.
Hence, a natural approach is to just truncate this expansions after $s$ terms, $k_\mathrm{app}=\sum_{i=1}^s\sigma_ie_i(x)e_i(y)$, to get a finite-dimensional approximation of the kernel $k$.
This approach is natural since the approximation quality of the operator $\K$ determines the resulting error estimates. 
Unfortunately, it is often rendered useless since the Mercer decomposition depends on the tuple $(k,\mu)$ and while explicit expression are known for special choices, in general it is unlikely to have a closed-form representation of the eigenpairs $(\sigma_i, e_i)_{i=1}^\infty$.

\paragraph{Other Approximations.}
A compromise which is relevant to our work is proposed in  \citet{san16}.
Instead of using the Mercer decomposition of $\K$ one uses the Mercer decomposition of \eqref{eq:def-k-z}.
Our main result allows to generalize this approach and to provide theoretical guarantees missing in the reference.
Related is the article \citet{gau21} that studies the interactions of
several Hilbert-Schmidt spaces of (integral) operators
given by a Nystr{\"o}m approximation/projection of a kernel-measure pair
as in the present paper; further, \citet{cha22} considers a low-rank approximation of an empirical kernel mean embedding by using a Nystr{\"o}m-based projection.
The leverage-based sampling studied in \citet{git16}
has continuous counterparts. One with a slight modification is in the kernel literature \citep{bac17}, while the exact counterpart can be found in a context from approximation theory \citep{coh17} under the name of {\it optimally-weighted sampling}, which essentially proposes sampling from $s^{-1}\sum_{i=1}^se_i^2(x)\dd\mu(x)$.
\paragraph{The Power Function.}
Finally, the square root of the diagonal term
$\sqrt{k(x, x) - k^Z(x, x)}$ or its generalization
is known as the {\it power function} in the literature on kernel-based interpolation \citep{dem03,san17,kar21}.
There the primary interest is its $L^\infty$ (uniform) norm, rather than the $L^1(\mu)$ norm, $\mu(\sqrt{k - k_\mathrm{app}})$,
or the $L^2(\mu)$ norm, $\mu(k - k_\mathrm{app})$, that appear in kernel quadrature estimates and error estimates of the Nystr{\"o}m/Mercer type decompositions.
\paragraph{Kernel Quadrature.}
The literature on kernel quadrature includes
herding \citep{che10,bac12,hus12,tsu22},
weighted/correlated sampling \citep{bac17,bel19,bel20,bel21},
a subsampling method called thinning \citep{dwi21,dwi22,she22}
and a positively weighted kernel quadrature \citep{hayakawa21b}
that motivated our work.
We refer to \citet[][Table 1]{hayakawa21b} for comparison of existing algorithms in terms of their convergence guarantees
and computational complexities.

\section{Analyzing $k_s^Z$ for i.i.d.~$Z$ via Statistical Learning Theory}\label{sec:main estimate}
Let $Z=(z_i)_{i=1}^\ell\subset\X$
and $k^Z_s$ be the $s$-dimensional kernel given by
$k^Z_s(x, y) = k(x, Z)k(Z, Z)_s^+k(Z, y)$ as in the usual Nystr{\"o}m approximation.
Throughout the paper,
suppose we are provided the singular value decomposition of the matrix
$k(Z, Z) = U\mathop\mathrm{diag}(\lambda_1, \ldots,\lambda_\ell)U^\top$
with an orthogonal matrix $U = [u_1, \ldots, u_\ell]$
and $\lambda_1\ge\cdots\ge\lambda_\ell\ge0$.
Note that 
\begin{equation}
    k_s^Z(x, y) = \sum_{i=1}^s \bm{1}_{\{\lambda_i>0\}}\frac1\lambda_i
    (u_i^\top k(Z, x))(u_i^\top k(Z, y))
    \label{eq:mu-z-mercer-empirical}
\end{equation}
is actually a truncated Mercer decomposition of $k^Z$ with regard to the measure
$\mu_Z = \frac1\ell\sum_{i=1}^\ell\delta_{z_i}$,
since
\begin{align*}
    &\ip{u_i^\top k(Z, \cdot), u_j^\top k(Z, \cdot)}_{L^2(\mu_Z)}\\
    &= \frac1\ell u_i^\top k(Z, Z) k(Z, Z)u_j = \frac{\lambda_i\lambda_j}\ell \delta_{ij}.
\end{align*}
This fact is at the heart of our analysis:
$k_s^Z$ is `optimal' $s$-rank approximation for the measure $\mu_Z$,
and the statistical learning theory connects estimates in empirical measure and the original measure.

Let us denote by $P_{Z, s}:\H_k\to\H_k$
the linear operator given by $k(\cdot, x) \mapsto k^Z_s(\cdot, x)$
for all $x\in\X$.
We shall also simply write $P_Z = P_{Z,\ell}$.
\begin{lem}\label{lem:proj-trivial}
    $P_{Z, s}$ is an orthogonal projection in $\H$.
\end{lem}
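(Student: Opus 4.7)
The plan is to exhibit $P_{Z,s}$ explicitly as the orthogonal projection onto a finite-dimensional subspace of $\H_k$ spanned by a natural orthonormal family built from the spectral decomposition of $k(Z,Z)$. Concretely, for every index $i$ with $\lambda_i>0$, define
\[
  \phi_i(\cdot) := \frac{1}{\sqrt{\lambda_i}}\, u_i^\top k(Z, \cdot) \in \H_k,
\]
which lies in $\H_k$ because it is a finite linear combination of the canonical sections $k(z_a,\cdot)$.

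The first step is to verify that $\{\phi_i : \lambda_i>0\}$ is an orthonormal system in $\H_k$. Using the reproducing property and writing out the double sum, one computes
\[
  \ip{u_i^\top k(Z,\cdot),\, u_j^\top k(Z,\cdot)}_{\H_k}
  = u_i^\top k(Z,Z)\, u_j = \lambda_j\, u_i^\top u_j = \lambda_j \delta_{ij},
\]
so the normalization by $\sqrt{\lambda_i}$ produces an orthonormal family. The next step is to rewrite \eqref{eq:mu-z-mercer-empirical} in these terms as
\[
  k_s^Z(\cdot, x) = \sum_{i=1}^s \bm{1}_{\{\lambda_i>0\}}\, \phi_i(x)\, \phi_i(\cdot),
\]
simply by grouping factors and using $\phi_i(x) = \lambda_i^{-1/2} u_i^\top k(Z,x)$.

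The final step is to identify the linear extension of $k(\cdot,x)\mapsto k_s^Z(\cdot,x)$ with the orthogonal projection $\Pi_s$ onto $V_s := \mathop{\mathrm{span}}\{\phi_i : 1\le i\le s,\ \lambda_i>0\}$. For any $f\in\H_k$, the orthonormality just established gives
\[
  \Pi_s f = \sum_{i=1}^s \bm{1}_{\{\lambda_i>0\}}\, \ip{f, \phi_i}_{\H_k}\, \phi_i.
\]
Applying this to $f = k(\cdot, x)$ and using the reproducing property $\ip{k(\cdot,x),\phi_i}_{\H_k}=\phi_i(x)$ yields $\Pi_s k(\cdot, x) = k_s^Z(\cdot, x)$, which coincides with $P_{Z,s}$ on the dense subspace $\mathop{\mathrm{span}}\{k(\cdot,x):x\in\X\}$. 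Since $\Pi_s$ is bounded, this also establishes that the naive linear extension defining $P_{Z,s}$ is well-defined on this dense span and extends uniquely to all of $\H_k$ as $\Pi_s$, which is visibly an orthogonal projection.

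The only potentially delicate point is bookkeeping the well-definedness of $P_{Z,s}$ as a linear map on the (non-free) span of the $k(\cdot,x)$: different representations $\sum_j c_j k(\cdot, x_j) = 0$ must be sent to zero. This is automatic once we observe that the candidate projection $\Pi_s$ agrees with $P_{Z,s}$ on each generator, so the formula for $P_{Z,s}$ inherits linearity and boundedness from $\Pi_s$. No further difficulty arises.
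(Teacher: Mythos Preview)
Your proof is correct and follows essentially the same approach as the paper: both construct the orthonormal family $\{\lambda_i^{-1/2}u_i^\top k(Z,\cdot):\lambda_i>0\}$ via the key computation $\ip{u_i^\top k(Z,\cdot),u_j^\top k(Z,\cdot)}_{\H_k}=u_i^\top k(Z,Z)u_j=\lambda_j\delta_{ij}$, and then identify $P_{Z,s}$ with the orthogonal projection onto the span of the first $s$ of these. Your explicit discussion of well-definedness on the dense span is a nice touch that the paper leaves implicit.
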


This projection is related the quantity of interest, in that
$k_s^Z(x, x) = \ip{k(\cdot, x), P_{Z, s} k(\cdot, x)}_{\H_k}
= \lVert P_{Z,s}k(\cdot, x)\rVert_{\H_k}^2$.
Thus, we have
$
    k(x, x) - k_s^Z(x, x)
    = \lVert P_{Z,s}^\perp k(\cdot, x)\rVert_{\H_k}^2
$
by using $P_{Z, s}^\perp$, the orthogonal complement of $P_{Z, s}$.
So we are now interested in estimating the integral
$\mu(\sqrt{k-k_s^Z})
= \int_\X \lVert P_{Z,s}^\perp k(\cdot, x)\rVert_{\H_k}\dd\mu(x)$
from the viewpoint of the projection operator.
We first estimate its empirical counterpart $
\mu_Z(\sqrt{k - k_s^Z}) = \frac1\ell\sum_{i=1}^\ell
\lVert P_{Z,s}^\perp k(\cdot, z_i)\rVert_{\H_k}$,
where $\mu_Z = \frac1\ell\sum_{i=1}^\ell \delta_{z_i}$
is the empirical measure.
Indeed, we have the following identity regarding $\mu_Z(k-k_s^Z)$:

\begin{lem}\label{lem:emp}
    For any $\ell$-point sample $Z\subset \X$,
    we have
    \[
        \mu_Z(\sqrt{k - k_s^Z})^2
        \le \mu_Z(k-k_s^Z)
        = \displaystyle\frac1\ell\sum_{i=s+1}^\ell \lambda_i
    \]
    where $\lambda_1\ge\cdots\ge\lambda_\ell$ are
    eigenvalues of $k(Z, Z)$.
\end{lem}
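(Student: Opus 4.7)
The plan is to split the claim into the inequality $\mu_Z(\sqrt{k-k_s^Z})^2\le\mu_Z(k-k_s^Z)$ and the equality $\mu_Z(k-k_s^Z)=\tfrac1\ell\sum_{i>s}\lambda_i$, and treat each by a short direct computation.

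For the inequality, note that $k-k_s^Z$ is pointwise nonnegative on the diagonal since by Lemma~\ref{lem:proj-trivial}, $k(x,x)-k_s^Z(x,x)=\lVert P_{Z,s}^\perp k(\cdot,x)\rVert_{\H_k}^2\ge 0$. The inequality is then just the Cauchy--Schwarz (equivalently, Jensen's) inequality applied to the probability measure $\mu_Z$ and the nonnegative function $\sqrt{k(\cdot,\cdot)-k_s^Z(\cdot,\cdot)}$:
\[
  \left(\int_\X\sqrt{k(x,x)-k_s^Z(x,x)}\dd\mu_Z(x)\right)^2\le\int_\X(k(x,x)-k_s^Z(x,x))\dd\mu_Z(x).
\]

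For the equality, I would compute the two integrals separately. Since $\mu_Z$ is the uniform measure on $Z$, we have $\mu_Z(k)=\tfrac1\ell\sum_{j=1}^\ell k(z_j,z_j)=\tfrac1\ell\operatorname{tr}k(Z,Z)=\tfrac1\ell\sum_{i=1}^\ell\lambda_i$. For $\mu_Z(k_s^Z)$, observe that the $\ell\times\ell$ matrix $\bigl(k_s^Z(z_i,z_j)\bigr)_{i,j=1}^\ell$ equals $k(Z,Z)\,k(Z,Z)^+_s\,k(Z,Z)$, and by the SVD $k(Z,Z)=\sum_{i=1}^\ell\lambda_i u_iu_i^\top$ together with $k(Z,Z)^+_s=\sum_{i=1}^s\mathbf{1}_{\{\lambda_i>0\}}\lambda_i^{-1}u_iu_i^\top$, this simplifies to $\sum_{i=1}^s\lambda_i u_iu_i^\top$, the best rank-$s$ approximation of $k(Z,Z)$. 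Taking the trace and dividing by $\ell$ gives $\mu_Z(k_s^Z)=\tfrac1\ell\sum_{i=1}^s\lambda_i$. Subtracting yields the claimed identity.

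There is no real obstacle in this lemma; it is essentially bookkeeping, but two small points are worth being careful about. First, one must check that the indicator $\mathbf{1}_{\{\lambda_i>0\}}$ in the expression \eqref{eq:mu-z-mercer-empirical} does not matter: terms with $\lambda_i=0$ contribute zero to both $k(Z,Z)$ and to $k(Z,Z)^+_sk(Z,Z)k(Z,Z)^+_s$, so they can be dropped. Second, one should verify the algebraic identity $k(Z,Z)\,k(Z,Z)^+_s\,k(Z,Z)=\sum_{i=1}^s\lambda_iu_iu_i^\top$, which follows from orthonormality of the $u_i$'s. With these in place, the lemma is immediate.
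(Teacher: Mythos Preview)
Your proof is correct. The inequality via Cauchy--Schwarz is exactly what the paper does. For the equality, however, you take a different and somewhat more direct route: you compute the traces of the two Gram matrices $k(Z,Z)$ and $k_s^Z(Z,Z)=k(Z,Z)k(Z,Z)_s^+k(Z,Z)=\sum_{i\le s}\lambda_i u_iu_i^\top$ separately and subtract. The paper instead stays in the RKHS picture it has just set up: it writes $k(z_i,z_i)-k_s^Z(z_i,z_i)=\lVert P_{Z,s}^\perp k(\cdot,z_i)\rVert_{\H_k}^2$, uses $P_Zk(\cdot,z_i)=k(\cdot,z_i)$ to identify $P_{Z,s}^\perp k(\cdot,z_i)=\sum_{j>s}Q_jk(\cdot,z_i)$, and then expands this norm using the eigenvectors $u_j$. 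Your argument is shorter and purely linear-algebraic, avoiding any reference to $\H_k$ or the projections $Q_j$; the paper's version keeps the computation aligned with the operator framework used in the rest of Section~\ref{sec:main estimate}. Either way the content is the same identity $\operatorname{tr}\bigl(k(Z,Z)-k_s^Z(Z,Z)\bigr)=\sum_{i>s}\lambda_i$.
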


When $Z$ is given by an i.i.d.~sampling,
the decay of eigenvalues $\lambda_i$ enjoys the rapid decay
given by $\sigma_i$ in the following sense:
\begin{lem}\label{lem:trace}
    Let $Z=(z_i)_{i=1}^\ell$ be an $\ell$-point independent sample from $\mu$.
    Then, for the eigenvalues $\lambda_1\ge\cdots\ge \lambda_\ell$ of $k(Z, Z)$,
    we have
    \[
        \E{\frac1\ell\sum_{i=s+1}^\ell\lambda_i}\le \sum_{i>s} \sigma_i.
    \]
\end{lem}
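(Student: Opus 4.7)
The plan is to lift the finite-matrix trace $\frac{1}{\ell}\sum_{i>s}\lambda_i$ to a spectral quantity of a positive self-adjoint trace-class operator on $\H_k$ that can be compared with $\K$ in expectation. Define the empirical and population covariance-type operators on $\H_k$ by
\[
T_Z := \frac{1}{\ell}\sum_{i=1}^\ell k(\cdot, z_i)\otimes k(\cdot, z_i), \quad T_\mu := \int_\X k(\cdot, x)\otimes k(\cdot, x)\dd\mu(x),
\]
where $(f\otimes g)(h) = \ip{g, h}_{\H_k} f$. Two standard identifications, which I would state and briefly justify via the factorisation $T_Z = S_Z^* S_Z$ with the sampling operator $S_Z \colon \H_k \to \R^\ell$, $f\mapsto \ell^{-1/2}(f(z_i))_{i=1}^\ell$, give: (a) the nonzero eigenvalues of $T_Z$ on $\H_k$ are exactly the $\lambda_i/\ell$ (since $S_Z S_Z^* = k(Z,Z)/\ell$); (b) the nonzero eigenvalues of $T_\mu$ on $\H_k$ coincide with the eigenvalues $(\sigma_i)_{i\ge 1}$ of $\K$ on $L^2(\mu)$, under the Mercer assumptions already in force. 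Moreover, by linearity of expectation and the fact that $Z$ is i.i.d., $\E{T_Z} = T_\mu$.

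The core ingredient is the Ky Fan maximum principle: for any positive trace-class operator $T$ on $\H_k$ and any $s$-dimensional closed subspace $V$ with orthogonal projection $P_V$,
\[
\sum_{i>s}\lambda_i(T) \le \mathrm{tr}\bigl((I - P_V)T\bigr),
\]
with equality when $V$ is the top-$s$ eigenspace of $T$. I would then choose $V_s$ to be the \emph{deterministic} top-$s$ eigenspace of $T_\mu$ and apply this inequality to $T = T_Z$:
\[
\frac{1}{\ell}\sum_{i>s}\lambda_i = \sum_{i>s}\lambda_i(T_Z) \le \mathrm{tr}\bigl((I - P_{V_s})T_Z\bigr).
\]
Taking expectation over $Z$, and using that $P_{V_s}$ does not depend on $Z$ together with $\E{T_Z}=T_\mu$, one obtains
\[
\E{\frac{1}{\ell}\sum_{i>s}\lambda_i} \le \mathrm{tr}\bigl((I - P_{V_s})T_\mu\bigr) = \sum_{i>s}\sigma_i,
\]
which is exactly the claim.

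The main difficulty is not the inequality itself but the spectral bookkeeping in (a) and (b): one must either stay on $\H_k$ throughout or carefully translate the Mercer decomposition between $L^2(\mu)$ and $\H_k$. Once those identifications are in hand, the proof is essentially a one-line application of Ky Fan plus the observation that the fixed eigenspace $V_s$ of the population operator $T_\mu$ provides a suboptimal---but now deterministic, hence easily evaluable in expectation---rank-$s$ comparison subspace for the random operator $T_Z$.
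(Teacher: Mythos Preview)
Your proof is correct and follows essentially the same strategy as the paper's: both apply the variational characterization of tail eigenvalue sums (Ky Fan / min-max) with the \emph{deterministic} top-$s$ Mercer subspace as the comparison subspace, and then take expectation. The only difference is the ambient space---you work with the covariance operator $T_Z$ on $\H_k$, whereas the paper stays with the Gram matrix $k(Z,Z)$ on $\R^\ell$ and chooses $V_s=\mathrm{span}\{e_1(Z),\ldots,e_s(Z)\}$; unwinding your trace $\mathrm{tr}\bigl((I-P_{V_s})T_Z\bigr)$ gives exactly the paper's pointwise bound $\frac{1}{\ell}\mathrm{tr}\,k_{s+1}(Z,Z)$.
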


For a general random orthogonal projection operator,
we can prove the following bound by using arguments in statistical learning theory
(Section \ref{sec:stat-learn}):
\begin{thm}\label{thm:low-dim-proj}
    Let $Z = (z_i)_{i=1}^\ell$ be an $\ell$-point
    independent sample from $\mu$
    and $P$ be a random orthogonal projection in $\H_k$
    possibly depending on $Z$.
    For any integer $m\ge1$, we have the following bound:
    \begin{align*}
        \E{\int_\X \lVert Pk(\cdot, x)\rVert_{\Hil_k} \dd\mu(x)}
        \le \E{\frac2\ell\sum_{i=1}^\ell \lVert Pk(\cdot, z_i)\rVert_{\Hil_k}} & \\
        +\ 4\sqrt{\sum_{i>m}\sigma_i} 
        + \frac{\sqrt{k_{\max}}}{\ell}\left(\frac{80m^2\log(1+2\ell)}9 + 69\right)&,
    \end{align*}
    where the expectation is taken regarding the draws of $Z$.
\end{thm}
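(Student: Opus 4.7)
The plan is to reduce the theorem to a uniform convergence statement over a finite-dimensional class of PSD matrices, and then apply the local-Rademacher-style machinery developed in Appendix~\ref{sec:stat-learn}.

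First I would split the kernel via its Mercer decomposition \eqref{eq:mu-mercer}: write $k(\cdot, x) = k^{(m)}(\cdot, x) + r^{(m)}(\cdot, x)$, where $k^{(m)}(\cdot, x) = \sum_{i \le m}\sigma_i e_i(x) e_i(\cdot)$ lies in the $m$-dimensional subspace $V_m = \mathrm{span}\{\sqrt{\sigma_i}e_i\}_{i \le m}$ of $\H_k$. Since $\{\sqrt{\sigma_i}e_i\}_{i \ge 1}$ is an orthonormal basis of $\H_k$, we have $\|r^{(m)}(\cdot, x)\|_{\H_k}^2 = \sum_{i > m}\sigma_i e_i(x)^2$, and by Jensen's inequality both $\mu(\|r^{(m)}(\cdot, x)\|_{\H_k})$ and $\E{\mu_Z(\|r^{(m)}(\cdot, z)\|_{\H_k})}$ are bounded by $\sqrt{\sum_{i>m}\sigma_i}$. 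Because $P$ is an orthogonal projection we have $\bigl|\|Pk(\cdot, x)\|_{\H_k} - \|Pk^{(m)}(\cdot, x)\|_{\H_k}\bigr| \le \|r^{(m)}(\cdot, x)\|_{\H_k}$ by the triangle inequality and $\|P\|\le 1$. This reduces the task to an analogous bound for the truncated object $\|Pk^{(m)}(\cdot, x)\|_{\H_k}$, at a total cost absorbed into the $4\sqrt{\sum_{i>m}\sigma_i}$ term (accounting for the factor $2$ on the empirical side and the slack from Jensen).

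Next I would parametrize. Set $\phi(x) = (\sqrt{\sigma_i}e_i(x))_{i \le m}^\top \in \R^m$, so that $\|\phi(x)\|^2 \le k(x, x) \le k_{\max}$, and for every orthogonal projection $P$ on $\H_k$ define the symmetric PSD matrix $M = M(P) \in \R^{m\times m}$ by $M_{ij} = \langle \sqrt{\sigma_i}e_i, P\sqrt{\sigma_j}e_j\rangle_{\H_k}$. Then $M$ lies in the convex body $\M_m := \{M \in \R^{m\times m} : M = M^\top,\ 0 \preceq M \preceq I_m\}$, and a direct computation gives $\|Pk^{(m)}(\cdot, x)\|_{\H_k}^2 = \phi(x)^\top M \phi(x)$. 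Writing $g_M(x) := \sqrt{\phi(x)^\top M \phi(x)}$, we have $0 \le g_M(x) \le \sqrt{k_{\max}}$ uniformly, and the problem reduces to controlling $\E{\mu(g_M)} - 2\E{\mu_Z(g_M)}$ uniformly over random $M \in \M_m$ possibly depending on $Z$.

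The main technical step, and the part I expect to be the hardest, is this uniform fast-rate bound. The crucial structural feature is that $g_M^2$ is \emph{linear} in $M$ and satisfies $g_M^2(x) \le \sqrt{k_{\max}}\, g_M(x)$, so for every $M$ the second moment of $g_M$ under $\mu$ is bounded by $\sqrt{k_{\max}}$ times its mean, which is precisely the variance-mean control needed to obtain fast rates of order $1/\ell$ rather than $1/\sqrt{\ell}$. Combined with covering-number estimates of $\M_m$ in an appropriate metric, which scale like $(\sqrt{k_{\max}}/\varepsilon)^{m(m+1)/2}$ and hence produce the $m^2 \log(1+2\ell)$ factor via Dudley-type chaining, the generic Talagrand/localization bound in Appendix~\ref{sec:stat-learn} applied to the class $\{g_M : M \in \M_m\}$ yields exactly the $\frac{\sqrt{k_{\max}}}{\ell}\bigl(\tfrac{80 m^2 \log(1+2\ell)}{9} + 69\bigr)$ contribution, with the explicit constants coming from carefully tracking the entropy integral and the peeling step. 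Concatenating this uniform bound with the Mercer-truncation reduction of the first step yields the stated inequality.
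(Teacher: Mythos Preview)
Your proposal is correct and follows essentially the same route as the paper. The paper also truncates via the Mercer projection $P_m$ onto $\mathrm{span}\{e_1,\dots,e_m\}$, parametrizes $\|PP_mk(\cdot,x)\|_{\H_k}^2$ as $\bm v_x^\top S\bm v_x$ for a PSD matrix $0\preceq S\preceq I_m$ (their $S=A_{P,Q}^*A_{P,Q}$ is exactly your $M$), and then applies Proposition~\ref{prop:sh} with $F=\sqrt{k_{\max}}$ and $\varepsilon=F/\ell$ after a covering argument. The only cosmetic differences are that the paper introduces an explicit minorant $f_-$ in the truncation step (where you use the triangle inequality directly, which in fact yields the slightly better constant $3$ instead of $4$), and that the paper covers the square-root parametrization $\{U:\|U\|_2\le1\}$ rather than $\M_m$ itself, which is convenient because $|\,\|U\bm v_x\|-\|V\bm v_x\|\,|\le\|U-V\|_2\sqrt{k_{\max}}$ gives a clean Lipschitz bound and produces the $m^2\log(1+2\ell)$ factor exactly.
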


Recall that $\mu(\sqrt{k-k_s^Z})=\int_\X \lVert
P_{Z, s}^\perp k(\cdot, x)\rVert_{\H_k}\dd\mu(x)$.
By combining this theorem when $P=P_{Z, s}^\perp$
and Lemma \ref{lem:emp} \& \ref{lem:trace},
we can obtain the following:
\begin{cor}\label{cor:wce-iid}
    Let $Z=(z_i)_{i=1}^\ell$ be an $\ell$-point independent sample from $\mu$.
    Then, for any integer $m\ge1$,
    we have
    \begin{align*}
        \E{\mu(\sqrt{k - k_s^Z})}
        &\le 2\sqrt{\sum_{i>s}\sigma_i}
        + 4\sqrt{\sum_{i>m}\sigma_i}\\
       &\quad + \frac{\sqrt{k_{\max}}}{\ell}\left(\frac{80m^2\log(1+2\ell)}9 + 69\right).
    \end{align*}
\end{cor}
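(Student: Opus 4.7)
The plan is to apply Theorem~\ref{thm:low-dim-proj} with the random projection $P := P_{Z,s}^\perp$, and then to control the resulting empirical term by chaining Lemma~\ref{lem:emp} and Lemma~\ref{lem:trace} through two uses of Jensen's inequality. Recall the identification noted just before Theorem~\ref{thm:low-dim-proj}: since $k_s^Z(x,x) = \lVert P_{Z,s} k(\cdot,x)\rVert_{\H_k}^2$, we have
\[
  \mu(\sqrt{k-k_s^Z}) = \int_\X \lVert P_{Z,s}^\perp k(\cdot,x)\rVert_{\H_k}\dd\mu(x),
\]
and analogously for $\mu_Z$. Feeding $P = P_{Z,s}^\perp$ into Theorem~\ref{thm:low-dim-proj} immediately yields
\[
  \E{\mu(\sqrt{k-k_s^Z})} \le 2\,\E{\mu_Z(\sqrt{k-k_s^Z})} + 4\sqrt{\sum_{i>m}\sigma_i} + \frac{\sqrt{k_{\max}}}{\ell}\left(\frac{80m^2\log(1+2\ell)}{9} + 69\right),
\]
so it remains to show $\E{\mu_Z(\sqrt{k-k_s^Z})} \le \sqrt{\sum_{i>s}\sigma_i}$.

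For this, first apply the Cauchy--Schwarz (equivalently Jensen) inequality under the empirical probability measure $\mu_Z$:
\[
  \mu_Z(\sqrt{k-k_s^Z}) \le \sqrt{\mu_Z(k-k_s^Z)}.
\]
By Lemma~\ref{lem:emp}, $\mu_Z(k-k_s^Z) = \tfrac{1}{\ell}\sum_{i>s}\lambda_i$, where $\lambda_1\ge\cdots\ge\lambda_\ell$ are the eigenvalues of $k(Z,Z)$. Now apply Jensen's inequality a second time to the concave function $\sqrt{\cdot}$ and invoke Lemma~\ref{lem:trace}:
\[
  \E{\sqrt{\tfrac{1}{\ell}\sum_{i>s}\lambda_i}} \le \sqrt{\E{\tfrac{1}{\ell}\sum_{i>s}\lambda_i}} \le \sqrt{\sum_{i>s}\sigma_i}.
\]
Substituting this back gives the stated corollary.

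The proof is essentially a bookkeeping exercise: the three building blocks (Theorem~\ref{thm:low-dim-proj}, Lemma~\ref{lem:emp}, Lemma~\ref{lem:trace}) were engineered precisely to fit together this way, so there is no real obstacle beyond tracking constants. The only step that costs something is moving the square root past the expectation via Jensen, which is responsible for the bound being expressed in terms of $\sqrt{\sum_{i>s}\sigma_i}$ rather than a potentially smaller $\E{\sqrt{\mu_Z(k-k_s^Z)}}$; but since the remaining terms of Theorem~\ref{thm:low-dim-proj} are already of the same form, this concession is harmless for the overall rate.
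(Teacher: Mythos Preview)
Your proof is correct and matches the paper's approach exactly: the paper states that the corollary follows ``by combining this theorem when $P=P_{Z,s}^\perp$ and Lemma~\ref{lem:emp} \& \ref{lem:trace},'' which is precisely the chain you wrote out, including the two Jensen/Cauchy--Schwarz steps needed to pass from $\E{\mu_Z(\sqrt{k-k_s^Z})}$ to $\sqrt{\sum_{i>s}\sigma_i}$.
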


\begin{rem}\label{rem:result}
    When $\sigma_j\lesssim e^{-\beta i^{1/d}}$ with a constant
    $\beta>0$ and a positive integer $d$
    \citep[typical for $d$-dimensional Gaussian kernel, see, e.g.,][Section A.2]{ada22},
    by taking $m \sim (\log\ell)^d$, we have a bound
    \[
        \E{\mu(\sqrt{k-k_s^Z})}
        = \ord{ \sqrt{\sum_{i>s}\sigma_i} + \frac{(\log\ell)^{2d+1}}\ell}
    \]
    for $\ell\ge3$; see Appendix~\ref{sec:proof-rem} for the proof.
    Since $k-k_s^Z \le \sqrt{k_{\max}}\sqrt{k-k_s^Z}$, the same estimate applies to
    $\mathbb{E}[\mu(\sqrt{k-k_s^Z})]$.
    These also lead to an $(s+1)$-point randomized convex kernel quadrature $Q_{s+1}$ with the same order of $\E{\wce(Q_{s+1})}$.
    See Section \ref{sec:kernel quadrature} for details.
\end{rem}

\section{A Refined Low-rank Approximation with General $Z$}\label{sec:new LR}
The process of obtaining a good approximation $k_\mathrm{app}$
of $k$ using $k^Z$ can be decomposed into two parts:
\[
    k - k_\mathrm{app} = \underbrace{k - k^Z}_{\mathrm{A}} + \underbrace{k^Z - k_\mathrm{app}}_{\mathrm{B}}.
\]
In the previous section, we have analyzed the case $Z$ is i.i.d.~and
$k_\mathrm{app}=k_s^Z$.
However, we can consider
more general $Z$,
and indeed we actually have a better way to select a subspace
(i.e., $k_\mathrm{app}$)
from the finite-rank kernel $k^Z$
rather than just using $k_s^Z$.

\subsection{Part A: Estimating the Error of $k^Z$ for General $Z$}
This part is relatively well-studied.
Indeed, $\mu(k - k^Z) = \int_\X (k(x, x) - k^Z(x, x))\dd\mu(x)$
for some non-i.i.d.~$Z$ can be bounded by using the results
of weighted kernel quadrature.
For example, \citet{bel19}
consider the worst-case error for the weighted integral
\begin{equation}
    \mu(fg)=\int_\X f(x)g(x)\dd\mu(x) \approx \sum_{i=1}^\ell w_if(z_i)
    \label{eq:kq-weighted}
\end{equation}
for any $\lVert f \rVert_{\H_k} \le 1$ and a fixed $g\in L^2(\mu)$
with $Z=(z_i)_{i=1}^\ell$ following a certain DPP.
Now consider the optimal worst-case error in the above approximation
for the fixed point configuration $Z$:
\begin{align}
    \icml{ & }{}
    \inf_{w_i}\sup_{\lVert f\rVert_{\H_k}\le1}\left\lvert \mu(fg)-
    \sum_{i=1}^\ell w_if(z_i)\right\rvert
    \icml{ \nonumber\\ }{}
    &= \sup_{\lVert f\rVert\le1}
    \left\lvert\ip{
        f, \int_\X k(\cdot, x)g(x)\dd\mu(x) - \sum_{i=1}^\ell w_ik(\cdot, z_i)
    }_{\H_k}\right\rvert \nonumber \\
    &= \inf_{w_i}\left\lVert
        \K g - \sum_{i=1}^\ell w_i k(\cdot, z_i)
    \right\rVert_{\H_k} = \lVert P_Z^\perp \K g\rVert_{\H_k}.
    \label{eq:weighted-wce}
\end{align}
By using this, we can prove the following estimate:
\begin{prop}\label{prop:wce-decomp}
    For any finite subset $Z\subset \X$ and any integer $m\ge0$,
    we have
    \begin{align*}
        \mu(k - k^Z)
        = \sum_{i=1}^\infty \lVert P_Z^\perp \K e_i\rVert_{\H_k}^2
        \le \sum_{i=1}^m \lVert P_Z^\perp \K e_i\rVert_{\H_k}^2
        + \sum_{i>m} \sigma_i
    \end{align*}
    where $(\sigma_i, e_i)_{i=1}^\infty$ are the eigenpairs of $\K$.
\end{prop}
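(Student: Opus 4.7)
The strategy is to combine the Mercer decomposition \eqref{eq:mu-mercer} with the identity
\[
k(x,x) - k^Z(x,x) = \lVert P_Z^\perp k(\cdot, x)\rVert_{\H_k}^2
\]
already noted just after Lemma~\ref{lem:proj-trivial} (applied with $s=\ell$, so that $P_{Z,\ell}=P_Z$). Integrating against $\mu$ reduces the proposition to evaluating
\[
\mu(k - k^Z) = \int_\X \lVert P_Z^\perp k(\cdot, x)\rVert_{\H_k}^2 \dd\mu(x),
\]
and then identifying this integral with the series in the statement.

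For the equality, I would use that by \eqref{eq:mu-mercer} the system $\phi_i := \sqrt{\sigma_i}\,e_i$ is orthonormal in $\H_k$ and $k(\cdot, x) = \sum_i \phi_i(x)\,\phi_i$ holds in $\H_k$. Applying the self-adjoint projection $P_Z^\perp$ and expanding the squared norm gives
\[
\lVert P_Z^\perp k(\cdot, x)\rVert_{\H_k}^2 = \sum_{i,j} \phi_i(x)\phi_j(x)\,\langle P_Z^\perp \phi_i, P_Z^\perp \phi_j\rangle_{\H_k}.
\]
Integrating in $x$ and using $\langle e_i, e_j\rangle_{L^2(\mu)}=\delta_{ij}$, equivalently $\int_\X \phi_i\phi_j\dd\mu = \sigma_i\delta_{ij}$, the off-diagonal contributions cancel, leaving
\[
\mu(k-k^Z) = \sum_{i=1}^\infty \sigma_i \lVert P_Z^\perp \phi_i\rVert_{\H_k}^2.
\]
Since $\K e_i = \sigma_i e_i$ as an element of $\H_k$, one has $\lVert P_Z^\perp \K e_i\rVert_{\H_k}^2 = \sigma_i^2\lVert P_Z^\perp e_i\rVert_{\H_k}^2 = \sigma_i\lVert P_Z^\perp \phi_i\rVert_{\H_k}^2$, matching the series in the proposition and completing the equality.

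For the inequality, I would split the series at index $m$ and bound each tail term trivially by $\lVert P_Z^\perp \K e_i\rVert_{\H_k}^2 \le \lVert \K e_i\rVert_{\H_k}^2 = \sigma_i^2\lVert e_i\rVert_{\H_k}^2 = \sigma_i$, where the last step uses $\lVert \phi_i\rVert_{\H_k}=1$. There is no real obstacle: the only subtlety is justifying the interchange of summation and integration and the convergence of the Mercer expansion in the appropriate senses, both of which follow from the standard hypotheses under which \eqref{eq:mu-mercer} was invoked in Section~\ref{sec:literature} (e.g.~$k$ continuous and bounded with $\supp\mu=\X$ and finite trace).
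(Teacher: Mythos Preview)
Your argument is correct. The inequality step and the final identification $\lVert P_Z^\perp \K e_i\rVert_{\H_k}^2 = \sigma_i\lVert P_Z^\perp(\sqrt{\sigma_i}e_i)\rVert_{\H_k}^2$ are exactly what the paper uses for the tail bound. For the equality, however, the route differs slightly. The paper computes $\mu(k-k^Z)$ as a trace in $L^2(\mu)$: it writes $\mu(k)=\sum_i\ip{e_i,\K e_i}_{L^2(\mu)}$ and $\mu(k^Z)=\sum_i\ip{e_i,\K^Z e_i}_{L^2(\mu)}$, identifies $\K^Z=P_Z\K$, and then uses the duality $\ip{f,\K g}_{\H_k}=\ip{f,g}_{L^2(\mu)}$ to rewrite $\ip{e_i,P_Z^\perp\K e_i}_{L^2(\mu)}=\lVert P_Z^\perp\K e_i\rVert_{\H_k}^2$. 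You instead stay entirely in $\H_k$: you expand $k(\cdot,x)$ in the Mercer orthonormal basis $(\sqrt{\sigma_i}e_i)$ of $\H_k$, apply Parseval, and integrate using the $L^2(\mu)$-orthonormality of the $e_i$ to kill the off-diagonal terms. Your approach is a bit more elementary in that it avoids introducing $\K^Z$ and the operator identity $\K^Z=P_Z\K$; the paper's route, on the other hand, makes the link to the weighted quadrature interpretation of $\lVert P_Z^\perp\K g\rVert_{\H_k}$ in \eqref{eq:weighted-wce} more transparent, which is the context in which the proposition is used.
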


The papers \citet{bel19,bel20,bel21}
give bounds on the worst-case error of the weighted kernel quadrature
\eqref{eq:weighted-wce}
when $Z$ is given by some correlated sampling,
whereas \citet{bac17} gives another bound
when $Z$ is given by an optimized weighted sampling rather than sampling from $\mu$.
By using \eqref{eq:weighted-wce} and Proposition \ref{prop:wce-decomp},
we can import their bounds on
weighted kernel quadrature with non-i.i.d.~$Z$
to the estimate of $\mu(k - k^Z) = \int_X \lVert P_Z^\perp k(\cdot, x)\rVert_{\H_k}^2\dd\mu(x)$.
Here, we just give one such example:
\begin{cor}\label{cor:dpp}
    Let $Z = (z_i)_{i=1}^\ell$ be taken from a DPP
    given by the projection kernel $p(x, y) = \sum_{i=1}^\ell e_i(x)e_i(y)$
    with a reference measure $\mu$,
    i.e., $\P{Z\in A} = \frac1{\ell!}\int_A\det p(Z, Z)\dd\mu^{\otimes\ell}(Z)$
    for any Borel set $A\subset\X^d$.
    Then, for any integer $m\ge0$, we have
    \[
        \E{\mu(k - k^Z)}
        \le \sum_{i>m}\sigma_i + 4m\sum_{i>\ell}\sigma_i,
    \]
    where the expectation is taken regarding the draws of $Z$.
\end{cor}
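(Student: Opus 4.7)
The plan is to combine the deterministic decomposition in Proposition~\ref{prop:wce-decomp} with a per-eigenfunction expected worst-case error bound for projection-DPP quadrature taken from the work of Belhadji and coauthors that is cited just before the corollary.

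First I would apply Proposition~\ref{prop:wce-decomp} pointwise in $Z$: for every realization and every integer $m\ge 0$,
\[
    \mu(k - k^Z) \le \sum_{i=1}^m \lVert P_Z^\perp \K e_i \rVert_{\H_k}^2 + \sum_{i>m}\sigma_i,
\]
and then take expectation over the DPP. The tail $\sum_{i>m}\sigma_i$ is deterministic, so the problem collapses to bounding $\E{\lVert P_Z^\perp \K e_i\rVert_{\H_k}^2}$ for each $i\le m$.

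The key reduction for the second step is the identity~\eqref{eq:weighted-wce}: the quantity $\lVert P_Z^\perp \K e_i\rVert_{\H_k}$ is precisely the optimal weighted worst-case error for approximating $\mu(f e_i)$ over $f$ in the unit ball of $\H_k$ by a quadrature rule supported on $Z$ (i.e.\ \eqref{eq:kq-weighted} with $g=e_i$). This is exactly the object that is analyzed for the projection DPP with kernel $p=\sum_{i=1}^\ell e_i\otimes e_i$ in \citet{bel19,bel20,bel21}. I would invoke their per-eigenfunction bound, which gives
\[
    \E{\lVert P_Z^\perp \K e_i \rVert_{\H_k}^2} \le 4 \sum_{j>\ell}\sigma_j
\]
uniformly in $i$. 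Plugging this into the expectation of the decomposition and summing over $i=1,\ldots,m$ then produces the advertised estimate.

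The main obstacle is this invocation of the DPP quadrature bound: one must match the present formulation (optimal weights, projection DPP with exactly the first $\ell$ Mercer eigenfunctions of $(k,\mu)$, test function $g=e_i$) to the results in \citet{bel19,bel20,bel21}, and in particular verify that the absolute constant is at most $4$ uniformly in $i\le m$ and that the controlling quantity is the $\ell$-term tail $\sum_{j>\ell}\sigma_j$ rather than a shifted tail depending on $i$. Once this uniform-in-$i$ bound is pinned down, the combinatorics is trivial and the $4m$ prefactor in front of $\sum_{i>\ell}\sigma_i$ simply records the sum over the $m$ chosen eigenfunctions.
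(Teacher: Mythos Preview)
Your proposal is correct and follows the same route as the paper: apply Proposition~\ref{prop:wce-decomp}, identify each term $\lVert P_Z^\perp \K e_i\rVert_{\H_k}^2$ with the optimal weighted quadrature error via~\eqref{eq:weighted-wce}, and then invoke the projection-DPP bound of Belhadji et al.\ (specifically \citet[Theorem~4]{bel21}) to get $\E{\lVert P_Z^\perp \K g\rVert_{\H_k}^2}\le 4\sum_{j>\ell}\sigma_j$ for each unit-norm $g\in L^2(\mu)$, hence in particular for each $e_i$. Your discussion of the ``main obstacle'' is exactly the citation the paper makes, so once that theorem is accepted the argument is complete.
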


In any case, by using those non-i.i.d.~points,
we can obtain a better $Z$ in the
sense that $\int_\X (k(x, x) - k^Z(x, x))\dd\mu(x)$
attains a sharper upper bound than 
the bound given in the previous section for
an $\ell$-point i.i.d.~sample from $\mu$.
However, for a general $Z$,
it is not necessary sensible to execute the SVD of $k(Z, Z)$ and get $k_s^Z$ accordingly,
as a SVD of $k(Z, Z)$ corresponds
to approximating $\mu$ by the empirical measure $\frac1\ell\sum_{i=1}^\ell \delta_{z_i}$
(indeed, this observation is the key to the results in the previous section).
Thus, for points $Z$ not given by i.i.d. sampling,
there should exist a better choice of $k_\mathrm{app}$ than $k_s^Z$.
We discuss this in the following section.

\subsection{Part B: Mercer Decomposition of $k^Z$}
Instead of using $k_s^Z$, we propose to compute the Mercer decomposition of $k^Z$
with respect to $\mu$
and truncate it to get $k_{s, \mu}^Z$, which is defined in the following.
This is doable if we have knowledge of $h_\mu(x, y):=\int_\X k(x, t)k(t, y)\dd\mu(t)$,
since $k^Z$ is a finite-dimensional kernel.
We can prove the following:
\begin{lem}\label{rem:square-kernel}
We have $h_\mu(x, y) = \sum_{i=1}^\infty
\sigma_i^2 e_i(x)e_i(y)$.
\end{lem}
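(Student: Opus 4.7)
The plan is to substitute the Mercer decomposition \eqref{eq:mu-mercer} into the integral defining $h_\mu$ and exploit $L^2(\mu)$-orthonormality of the eigenfunctions. Concretely, I would write
\begin{align*}
    h_\mu(x,y) &= \int_\X k(x,t)k(t,y)\dd\mu(t) \\
    &= \int_\X \Bigl(\sum_{i=1}^\infty \sigma_i e_i(x) e_i(t)\Bigr) \Bigl(\sum_{j=1}^\infty \sigma_j e_j(t) e_j(y)\Bigr) \dd\mu(t),
\end{align*}
and then interchange summation and integration to obtain
\[
    h_\mu(x,y) = \sum_{i,j}\sigma_i\sigma_j e_i(x) e_j(y)\ip{e_i, e_j}_{L^2(\mu)} = \sum_{i=1}^\infty \sigma_i^2 e_i(x) e_i(y),
\]
using $\ip{e_i,e_j}_{L^2(\mu)}=\delta_{ij}$.

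The only nontrivial step is justifying the interchange of the two series with the integral. The cleanest route is to fix $x,y\in\X$ and consider the functions $u(t):=k(x,t)=\sum_i \sigma_i e_i(x) e_i(t)$ and $v(t):=k(t,y)=\sum_j \sigma_j e_j(y) e_j(t)$ as elements of $L^2(\mu)$; the Mercer assumptions (continuity of $k$, $\mu(k)<\infty$, $\mathop{\mathrm{supp}}\mu=\X$) ensure that these series converge in $L^2(\mu)$ with coefficients $\sigma_i e_i(x)$ and $\sigma_j e_j(y)$ respectively. Parseval's identity for $\ip{u,v}_{L^2(\mu)}$ with respect to the orthonormal basis $(e_i)$ then immediately yields $\int_\X u(t)v(t)\dd\mu(t)=\sum_i \sigma_i^2 e_i(x) e_i(y)$, avoiding any delicate pointwise series manipulation.

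I do not expect any real obstacle here; the main thing to be careful about is to phrase the argument via $L^2(\mu)$ inner products of the sections $k(x,\cdot)$ and $k(\cdot,y)$ rather than trying to commute two series and an integral by dominated convergence, since the latter would require an extra absolute-convergence argument (e.g.\ boundedness of $k$ and square-summability of $\sigma_i$) that is not strictly necessary for the conclusion.
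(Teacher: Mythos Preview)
Your proposal is correct and follows essentially the same approach as the paper: substitute the Mercer expansion into the integral and use $L^2(\mu)$-orthonormality of the $e_i$. Your Parseval-based justification for exchanging the series with the integral is in fact more careful than the paper's own argument, which simply swaps the sums and the integral without further comment.
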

    
We now discuss how $h_\mu$ can be used to derive the Mercer decomposition of $k^Z$.
Note that this can be regarded as a generalization of \citet[][Section 6]{san16}.
Let $\K^Z:L^2(\mu)\to L^2(\mu)$ be the integral operator given by
$g\mapsto \int_\X k^Z(\cdot, x)g(x)\dd\mu(x)$.

For functions of the form $f=a^\top k(Z, \cdot)$
and $g =  b^\top k(Z, \cdot)$
with $a, b\in\R^\ell$,
we have
\begin{align}
    \ip{f, g}_{L^2(\mu)} &=
    \int_\X a^\top k(Z, x)k(x, Z)b\dd\mu(x)
    \icml{ \nonumber\\& }{}
    = 
    a^\top h_\mu(Z, Z) b.
    \label{eq:ip-z}
\end{align}
So, if we write $h_\mu(Z, Z)=H^\top H$ by using an $H\in\R^{\ell\times\ell}$
(since $h_\mu(Z, Z)$ is positive semi-definite),
an element $f = a^\top k(Z, \cdot)\in L^2(\mu)$
is non-zero if and only if $Ha\ne 0$.
Furthermore, we have
\begin{align}
    \mathcal{K}^Zf
    &= \int_{\X} k(\cdot, Z)k(Z, Z)^+k(Z, x)k(x, Z)a
        \dd\mu(x)\nonumber\\
    &= k(\cdot, Z) k(Z, Z)^+ h_\mu(Z, Z)a \nonumber\\
    &= \left[k(Z, Z)^+ h_\mu(Z, Z)a\right]^\top k(Z, \cdot).
    \label{eq:k-z}
\end{align}
Thus, $f$ is a nontrivial eigenfunction of $\K^Z$,
if $Ha\ne0$
and $a$ is an eigenvector of $k(Z, Z)^+ h_\mu(Z, Z)$.
It is equivalent to $c = Ha$ being an eigenvector of
$Hk(Z, Z)^+H^\top$.

Let us decompose this matrix
by SVD as $H k(Z, Z)^+ H^\top = V\mathop\mathrm{diag}(\kappa_1, \ldots, \kappa_\ell)V^\top$,
where the $V = [v_1, \ldots, v_\ell]\in\R^{\ell\times\ell}$ is an orthogonal matrix
and $\kappa_1\ge\cdots\ge\kappa_\ell\ge0$.
Then, we have
\[
    Hk(Z, Z)^+H^\top = \sum_{i=1}^\ell \kappa_i v_iv_i^\top.
\]
Let us consider $f_i = (H^+v_i)^\top k(Z, \cdot) = v_i^\top (H^+)^\top k(Z, \cdot)$
for $i=1,\ldots, \ell$
as candidates of eigenfunctions of $\K^Z$.
We can actually prove the following:
\begin{lem}\label{lem:or-eig}
    The set $\{f_i\mid i\ge1,\,\kappa_i>0\}$ forms an orthornomal subset of
    $L^2(\mu)$
    whose elements are eigenfunctions of $\K^Z$.
\end{lem}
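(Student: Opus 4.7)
The plan is to verify orthonormality and the eigenfunction property separately by invoking the two identities \eqref{eq:ip-z} and \eqref{eq:k-z} on the coefficient vectors $a_i := H^+ v_i$, using only elementary facts about the SVD $H k(Z,Z)^+ H^\top = \sum_i \kappa_i v_i v_i^\top$ and the orthogonal projector $HH^+$ onto $\mathrm{range}(H)$.

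First I would record the crucial preliminary observation: whenever $\kappa_i > 0$, the vector $v_i$ lies in $\mathrm{range}(H)$, because $\kappa_i v_i = H(k(Z,Z)^+ H^\top v_i)$ is explicitly of the form $Hu$. It follows that $HH^+ v_i = v_i$, and this single identity drives both halves of the proof.

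For orthonormality, I would apply \eqref{eq:ip-z} with $a = H^+ v_i$, $b = H^+ v_j$ to rewrite $\ip{f_i, f_j}_{L^2(\mu)}$ as $v_i^\top (HH^+)^\top (HH^+) v_j$, and then collapse this to $v_i^\top v_j = \delta_{ij}$ using the symmetry and idempotency of $HH^+$ combined with $HH^+ v_j = v_j$. For the eigenfunction property, I would apply \eqref{eq:k-z} with $a = H^+ v_i$ to obtain $\K^Z f_i = [k(Z,Z)^+ H^\top H H^+ v_i]^\top k(Z, \cdot) = [k(Z,Z)^+ H^\top v_i]^\top k(Z, \cdot)$, and then recognise that this coefficient vector represents the same $L^2(\mu)$ element as $\kappa_i H^+ v_i$. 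The last step reduces, via \eqref{eq:ip-z}, to verifying the vector identity $H k(Z,Z)^+ H^\top v_i = \kappa_i v_i$, which is exactly the defining SVD equation for $v_i$.

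The one point that requires care is the bookkeeping of the two pseudoinverse projectors: the identity $HH^+ v_i = v_i$ is genuine (it rests on $v_i \in \mathrm{range}(H)$ when $\kappa_i > 0$), whereas the superficially analogous $H^+ H v_i = v_i$ is generally false and, crucially, is never needed. Isolating this asymmetry is really the only non-routine ingredient; once it is in hand the rest is straightforward matrix algebra together with the observation that $L^2(\mu)$-equivalence of linear combinations of the functions $k(z_i, \cdot)$ depends only on the image of the coefficient vector under $H$.
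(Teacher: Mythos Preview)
Your proposal is correct and follows essentially the same approach as the paper: both arguments hinge on the identity $HH^+ v_i = v_i$ for $\kappa_i > 0$, then use \eqref{eq:ip-z} for orthonormality and \eqref{eq:k-z} for the eigenfunction property. The only cosmetic difference is that you derive $HH^+ v_i = v_i$ by reading $v_i \in \mathrm{range}(H)$ directly off the eigenvalue equation $\kappa_i v_i = H(k(Z,Z)^+ H^\top v_i)$, whereas the paper shows $v_i \perp \ker H^\top$ via the quadratic form $0 = v^\top H k(Z,Z)^+ H^\top v = \sum_i \kappa_i (v^\top v_i)^2$; these are two sides of the same coin.
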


Let us define $k^Z_\mu(x, y):= \sum_{i=1}^\ell \kappa_i f_i(x)f_i(y)$;
note that this is computable.
From the above lemma,
this expression is a natural candidate for ``Mercer decomposition'' of $k^Z$.
We can prove that it actually coincides with $k^Z(x, y)$ $\mu$-almost everywhere,
and so the decomposition is independent of the choice of $H$ up to $\mu$-null sets:
\begin{prop}\label{prop:ae}
    There exists a measurable set $A\subset\X$ depending on $Z$
    with
    $\mu(A)=1$ such that
    $k^Z(x, y) = k^Z_\mu(x, y)$
    holds for all $x,y\in A$.
    Moreover, we can take $A = \X$ if $\ker h_\mu(Z, Z)\subset \ker k(Z, Z)$.
\end{prop}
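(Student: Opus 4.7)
The plan is to recast both sides of the claimed identity as expressions of the form $k(x, Z)\, Q\, k(Z, y)$ and then track when the $P$-factors on either side of $k(Z,Z)^+$ are either absorbed automatically (the $\mu$-a.e.~case) or can be inserted on the nose (the ``moreover'' case). Set $P := H^+ H$, the orthogonal projection onto $\mathrm{range}(H^\top) = (\ker H)^\perp$; since $h_\mu(Z, Z) = H^\top H$, we have $\ker P = \ker h_\mu(Z, Z)$. Using $f_i(x) = v_i^\top (H^+)^\top k(Z, x)$ and $\sum_i \kappa_i v_i v_i^\top = H k(Z, Z)^+ H^\top$, direct expansion gives
\[
    k^Z_\mu(x, y) = k(x, Z)\, H^+ H k(Z, Z)^+ H^\top (H^+)^\top\, k(Z, y) = k(x, Z)\, P\, k(Z, Z)^+ P\, k(Z, y),
\]
where the second equality uses $H^\top (H^+)^\top = (H^+ H)^\top = P$. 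Compared with $k^Z(x, y) = k(x, Z)\, k(Z, Z)^+\, k(Z, y)$, it suffices to show that the two $P$'s can be removed on an appropriate set of arguments.

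For the $\mu$-a.e.~claim, \eqref{eq:ip-z} gives that for any $a \in \ker h_\mu(Z, Z)$ the function $a^\top k(Z, \cdot)$ has zero $L^2(\mu)$-norm and hence vanishes outside a $\mu$-null set $N_a$. Picking a finite basis $a_1, \dots, a_r$ of $\ker h_\mu(Z, Z) \subset \R^\ell$, the union $N := \bigcup_{j \le r} N_{a_j}$ is still $\mu$-null, and on $A := \X \setminus N$ every element of $\ker h_\mu(Z, Z)$ is orthogonal to $k(Z, x)$. Thus $k(Z, x) \in (\ker P)^\perp = \mathrm{range}(P)$ and $P k(Z, x) = k(Z, x)$ for $x \in A$; applying this at both $x$ and $y$ in $A$ turns the display into $k(x, Z) k(Z, Z)^+ k(Z, y) = k^Z(x, y)$.

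For the ``moreover'' part, the hypothesis $\ker h_\mu(Z, Z) \subset \ker k(Z, Z)$, together with the identity $\ker k(Z, Z)^+ = \ker k(Z, Z)$ (valid because $k(Z, Z)$ is symmetric PSD), gives $\ker P \subset \ker k(Z, Z)^+$. Decomposing $k(Z, y) = P k(Z, y) + (I - P) k(Z, y)$ then kills the second term under $k(Z, Z)^+$, so $k(Z, Z)^+ k(Z, y) = k(Z, Z)^+ P k(Z, y)$; symmetry of $k(Z, Z)^+$ supplies the analogous $P$ on the left, and hence $k^Z(x, y) = k(x, Z) P k(Z, Z)^+ P k(Z, y) = k^Z_\mu(x, y)$ for every $x, y \in \X$, so one may take $A = \X$. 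The main obstacle is the bookkeeping in the opening display: getting the conjugations by $H^+$ and $(H^+)^\top$ right and keeping the pseudo-inverse identities ($H^+H$ is the orthogonal projection onto $(\ker H)^\perp$, $\ker k(Z,Z)^+ = \ker k(Z,Z)$) straight without assuming any structure on $H$ beyond $H^\top H = h_\mu(Z, Z)$.
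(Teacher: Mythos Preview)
Your proof is correct and follows essentially the same route as the paper: both reduce $k_\mu^Z(x,y)$ to $k(x,Z)\,P\,k(Z,Z)^+\,P\,k(Z,y)$ with $P=H^+H$ and then argue that the projections $P$ can be removed. For the a.e.\ part the paper computes $\int_\X (u^\top(I-P)k(Z,x))^2\dd\mu(x)=0$ directly, whereas you construct $A$ explicitly from a basis of $\ker h_\mu(Z,Z)$; for the ``moreover'' part the paper passes through the SVD of $k(Z,Z)$ to show $Pu_i=u_i$ for the nonzero eigenvectors, while you absorb $P$ into $k(Z,Z)^+$ via $\ker P\subset\ker k(Z,Z)^+$ and symmetry --- these are the same fact viewed from opposite sides.
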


Now we just define $k_{s,\mu}^Z$ for $s\le\ell$ as follows:
\begin{equation}
    k_{s,\mu}^Z(x, y):= \sum_{i=1}^s \kappa_if_i(x)f_i(y).
    \label{eq:k-s-mu-formal}
\end{equation}
\begin{thm}\label{thm:k-s-mu-z-decay}
    We have
    $\mu(k_\mu^Z - k_{s,\mu}^Z) \le \sum_{i=s+1}^\ell\sigma_i$
    for any $Z = (z_i)_{i=1}^\ell \subset\X$.
\end{thm}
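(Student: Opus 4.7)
The plan is to reduce both sides of the inequality to tail sums of eigenvalues and then dominate one by the other via an operator-monotonicity argument. First, I would observe from \eqref{eq:k-s-mu-formal} that $(k_\mu^Z - k_{s,\mu}^Z)(x,y) = \sum_{i=s+1}^\ell \kappa_i f_i(x)f_i(y)$; integrating the diagonal against $\mu$ and invoking Lemma~\ref{lem:or-eig} (which gives $\lVert f_i\rVert_{L^2(\mu)}=1$ whenever $\kappa_i>0$, while $\kappa_i=0$ terms vanish regardless) yields the identity
\[
\mu(k_\mu^Z - k_{s,\mu}^Z) \;=\; \sum_{i=s+1}^\ell \kappa_i.
\]

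Next, I would identify the $\kappa_i$ with the top $\ell$ eigenvalues of the integral operator $\K^Z$ and compare $\K^Z$ to $\K$. Let $S:\H_k\hookrightarrow L^2(\mu)$ denote the natural inclusion, so $\K = SS^*$. A short computation using the reproducing property together with $A^+AA^+ = A^+$ gives $k^Z(x,y) = \langle P_Z k(\cdot,x), P_Z k(\cdot,y)\rangle_{\H_k}$, which in turn yields the operator identity $\K^Z = SP_Z S^*$ on $L^2(\mu)$; here Proposition~\ref{prop:ae} licenses the free identification of $k^Z$ with $k_\mu^Z$ modulo $\mu$-null sets. Combined with Lemma~\ref{lem:or-eig}, this shows the nonzero $\kappa_i$ are exactly the nonzero eigenvalues of $\K^Z$, and since $\K^Z$ has rank at most $\ell$, the decreasing list $\kappa_1\ge\cdots\ge\kappa_\ell$ (padded with zeros if necessary) coincides with the top $\ell$ eigenvalues of $\K^Z$.

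The key step is then the operator comparison $\K\succeq \K^Z$. Since $I-P_Z$ is an orthogonal projection in $\H_k$,
\[
\K - \K^Z \;=\; S(I-P_Z)S^* \;=\; (SP_Z^\perp)(SP_Z^\perp)^* \;\succeq\; 0.
\]
By Weyl's monotonicity principle for compact positive self-adjoint operators, this gives $\kappa_i\le\sigma_i$ for every $i\ge 1$, and summing from $i=s+1$ to $\ell$ completes the proof. The only technical subtlety I anticipate is the handling of indices with $\kappa_i=0$, where $f_i$ may fail to be well-defined (or normalized) as an element of $L^2(\mu)$; this is harmless since such terms contribute zero to both the integral and the eigenvalue sum, but should be flagged once in the argument.
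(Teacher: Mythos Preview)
Your proposal is correct and follows essentially the same route as the paper: reduce the left-hand side to $\sum_{i=s+1}^\ell\kappa_i$ via Lemma~\ref{lem:or-eig}, then obtain $\kappa_i\le\sigma_i$ by Weyl's inequality from the positivity of $\K-\K^Z$. The only cosmetic difference is that you justify $\K\succeq\K^Z$ via the explicit factorization $S(I-P_Z)S^*$, whereas the paper invokes positive definiteness of $k-k_\mu^Z$ on a full-measure set through Proposition~\ref{prop:ae}; these are equivalent phrasings of the same fact.
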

\begin{proof}
    The left-hand side is equal to $\sum_{i=s+1}^\ell\kappa_i$
    from Lemma \ref{lem:or-eig} and the definition of the kernels.
    Thus, it suffices to prove $\kappa_i\le\sigma_i$ for each $i$.
    It directly follows from the min-max principle (or Weyl's inequality) as
    $k - k_\mu^Z$ is positive definite on an $A\subset\X$
    with $\mu(A)=1$ from Proposition \ref{prop:ae}.
\end{proof}

\begin{rem}
    The choice of the matrix $H$ with
    $H^\top H = h_\mu(Z, Z)$
    does not affect the theory but might affect the numerical errors.
    We have used the matrix square-root $h_\mu(Z, Z)^{1/2}$,
    i.e., the symmetric and positive semi-definite matrix $H$ with
    $H^2 = h_\mu(Z, Z)$, throughout the experiments in Section~\ref{sec:kernel quadrature}, so that we just need to take the pseudo-inverse of
    positive semi-definite matrices.
\end{rem}

\paragraph{Approximate Mercer Decomposition.}
When we have no access to the function $h_\mu$,
we can just approximate it by using an empirical measure.
For a $X=(x_j)_{j=1}^M\subset\X$,
denote by $h_X$ the function given by
replacing $\mu$ in $h_\mu$ with the empirical measure with points $X$:
\[
    h_X(x, y) = \frac1M\sum_{j=1}^M k(x, x_j)k(x_j, y)=\frac1M k(x, X)k(X, y).
\]
We can actually replace every $h_\mu$ by $h_X$ in the above construction to
define $k_X^Z$ and $k_{s, X}^Z$.
This approximation is already mentioned by \citet{san16} 
without theoretical guarantee.
Another remark is that, when restricted on the set $X$,
it is equivalent to the best $s$-rank approximation of $k^Z(X, X)$
in the Gram-matrix case \citep{tro17,wan19},
since the $L^2$-norm for the uniform measure on $X$
just corresponds to the $\ell^2$-norm in $\R^{\lvert X\rvert}$.

Note that we have $k_X^Z(X, X) = k^Z(X, X)$ from Proposition \ref{prop:ae}
in the discrete case.
As we have $\ker h_X(Z, Z) = \ker k(Z, X)k(X, Z) = \ker k(X, Z)$,
we additionally obtain the following sufficient condition
from Proposition \ref{prop:ae}.
\begin{prop}\label{prop:discrete}
    $k_X^Z(x, y)= k^Z(x, y)$ holds for all $x, y\in X$. 
    Moreover, if $\ker k(X, Z)\subset \ker k(Z, Z)$,
    then we have $k_X^Z = k^Z$ over the whole $\X$.
\end{prop}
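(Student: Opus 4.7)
The plan is to reduce Proposition~\ref{prop:discrete} to Proposition~\ref{prop:ae} by specializing the measure to the empirical measure $\mu_X := \frac{1}{M}\sum_{j=1}^M \delta_{x_j}$ associated with $X = (x_j)_{j=1}^M$. Since $h_{\mu_X}(x, y) = \frac{1}{M}\sum_{j=1}^M k(x, x_j) k(x_j, y) = h_X(x, y)$, the general construction that produces $k_\mu^Z$ from $h_\mu$ produces precisely $k_X^Z$ when applied to $\mu_X$; in particular $k_{\mu_X}^Z = k_X^Z$. Proposition~\ref{prop:ae} then supplies a measurable set $A \subset \X$ with $\mu_X(A) = 1$ on which $k^Z = k_X^Z$.

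For the first assertion I would observe that any set of full $\mu_X$-measure must contain $X$: if some $x_j$ were not in $A$, then $\mu_X(\X \setminus A) \ge 1/M > 0$, contradicting $\mu_X(A) = 1$. Hence $X \subset A$, so $k_X^Z(x, y) = k^Z(x, y)$ for every $x, y \in X$.

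For the second assertion it suffices to check that the hypothesis $\ker h_{\mu_X}(Z, Z) \subset \ker k(Z, Z)$ from the second half of Proposition~\ref{prop:ae} is equivalent to the stated condition $\ker k(X, Z) \subset \ker k(Z, Z)$. Writing $h_X(Z, Z) = \frac{1}{M} k(Z, X) k(X, Z) = \frac{1}{M} k(X, Z)^\top k(X, Z)$ and invoking the standard identity $\ker B^\top B = \ker B$ yields $\ker h_X(Z, Z) = \ker k(X, Z)$, so the two hypotheses coincide. The second half of Proposition~\ref{prop:ae} then gives $A = \X$, whence $k_X^Z = k^Z$ on all of $\X$.

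No substantial obstacle is expected here: both halves are direct specializations of Proposition~\ref{prop:ae} once one recognizes that the $h_X$-based construction of $k_X^Z$ is nothing but the general $h_\mu$-based construction applied to $\mu = \mu_X$. The only minor point worth spelling out is the elementary observation that $\mu_X$-almost-everywhere equality upgrades to pointwise equality on the atoms of $\mu_X$, which is what converts the general almost-everywhere statement into the clean discrete one stated here.
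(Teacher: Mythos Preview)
Your proposal is correct and matches the paper's own argument essentially verbatim: the paper also derives both parts directly from Proposition~\ref{prop:ae} applied to $\mu_X$, noting that $\ker h_X(Z,Z)=\ker k(Z,X)k(X,Z)=\ker k(X,Z)$. The only difference is that you spell out explicitly why a set of full $\mu_X$-measure must contain $X$, which the paper leaves implicit.
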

In particular, we have $k_X^Z = k^Z$ whenever $Z\subset X$.
These (at least $\mu$-a.s.) equalities
given in Proposition \ref{prop:ae} \& \ref{prop:discrete}
are necessary for the applications to kernel quadrature,
since we need $k - k_\mathrm{app}$ to be positive definite
for exploiting the existing guarantees such as 
Theorem~\ref{thm:pkq} in the next section.

Although checking $k_X^Z=k^Z$ is not an easy task,
from the first part of Proposition \ref{prop:discrete},
$k_{s,X}^Z$ satisfies the following estimate
in terms of the empirical measure $\mu_X$.
\begin{prop}\label{prop:emp-eig-decay-x}
    Let $Z\subset \X$ be a fixed subset
    and $X$ be an $M$-point independent sample from $\mu$.
    Then, we have
    \[
        \E{\mu_X(k^Z - k_{s, X}^Z)}
        = \E{\mu_X(k_X^Z - k_{s, X}^Z)}
        \le \sum_{i>s}\sigma_i,
    \]
    where the expectation is taken regarding the draws of $X$.
\end{prop}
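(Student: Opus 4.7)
My plan is to reduce the bound to Theorem~\ref{thm:k-s-mu-z-decay} applied with the random empirical measure $\mu_X$ in place of $\mu$, and then control the resulting random spectral tail via Lemma~\ref{lem:trace}. The stated equality $\mu_X(k^Z - k_{s,X}^Z) = \mu_X(k_X^Z - k_{s,X}^Z)$ is immediate from Proposition~\ref{prop:discrete}: both diagonal integrals are taken with respect to $\mu_X$, which is supported on $X$, and $k_X^Z$ agrees with $k^Z$ on $X\times X$. So it suffices to bound the second expression.

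For the core inequality, the key observation is that $k_X^Z$ and $k_{s,X}^Z$ are precisely the objects $k_{\mu_X}^Z$ and $k_{s,\mu_X}^Z$ from Section~\ref{sec:new LR}: the construction of the Mercer-type decomposition of $k^Z$ depends on the underlying measure only through the matrix $h_\mu(Z,Z)$, which becomes $h_X(Z,Z)$ exactly when $\mu$ is replaced by $\mu_X$. Applying Theorem~\ref{thm:k-s-mu-z-decay} with $\mu_X$ in place of $\mu$ then yields
\[
\mu_X(k_X^Z - k_{s,X}^Z)\le \sum_{i=s+1}^\ell \sigma_i^X,
\]
where $(\sigma_i^X)$ denotes the non-increasingly ordered eigenvalues of the integral operator $\K_X:L^2(\mu_X)\to L^2(\mu_X)$ associated with $(k,\mu_X)$.

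The last step connects this random spectrum to the Gram matrix $k(X,X)$. Since $\mu_X$ is supported on the $M$ points of $X$, the non-zero eigenvalues of $\K_X$ coincide with those of $\tfrac{1}{M}k(X,X)$, giving $\sigma_i^X = \lambda_i^X/M$ for the ordered eigenvalues $(\lambda_i^X)$ of $k(X,X)$. Extending the summation by non-negativity to the full matrix spectrum and invoking Lemma~\ref{lem:trace} for the i.i.d.\ sample $X$ of size $M$ then gives $\E{\sum_{i=s+1}^\ell \sigma_i^X}\le \E{\tfrac{1}{M}\sum_{i=s+1}^M\lambda_i^X}\le \sum_{i>s}\sigma_i$, which closes the argument. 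I do not expect a genuine obstacle here; the argument is essentially bookkeeping once one recognises $k_X^Z$ as the $\mu_X$-Mercer object, and the only mildly delicate point is checking that Theorem~\ref{thm:k-s-mu-z-decay} is stated in sufficient generality to admit a finitely supported random measure.
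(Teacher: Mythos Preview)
Your proposal is correct, and the underlying mechanism---a Weyl-type comparison followed by Lemma~\ref{lem:trace}---is the same as the paper's, but the two arguments are organized differently. You recognise $k_{s,X}^Z$ as $k_{s,\mu_X}^Z$ and invoke Theorem~\ref{thm:k-s-mu-z-decay} with $\mu_X$ in place of $\mu$, so the Weyl step is already absorbed there; all that remains is the elementary identification of the spectrum of $\K_X$ on $L^2(\mu_X)$ with that of $\tfrac{1}{M}k(X,X)$. The paper instead works entirely at the matrix level: it first proves an auxiliary lemma showing that for positive semi-definite $A,B$ with $B=C^\top C=D^\top D$ the matrices $CAC^\top$ and $DAD^\top$ share their nonzero spectrum, uses this to identify the $\kappa_i^X$ (eigenvalues of $H_Xk(Z,Z)^+H_X^\top$) with those of $\tfrac{1}{M}k^Z(X,X)$, and only then applies Weyl against $\tfrac{1}{M}k(X,X)$. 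Your route is more economical in that it reuses the operator-level statement and dispenses with the matrix lemma; the paper's route is more self-contained, since every step is an explicit matrix computation and one need not revisit whether the constructions of Section~\ref{sec:new LR} (Lemma~\ref{lem:or-eig}, Proposition~\ref{prop:ae}) go through for a finitely supported measure---though, as you note, Proposition~\ref{prop:discrete} makes that verification immediate.
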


We can also give a bound of the resulting error $\mu(k^Z - k_{s,X}^Z)$
again by using
the arguments from learning theory, but under an additional assumption as stated in the following. 
Nevertheless, Proposition~\ref{prop:emp-eig-decay-x}
is already sufficient for our application in kernel quadrature;
see Theorem~\ref{thm:main-kq-emp}.

\begin{prop}\label{thm:k-s-x}
    Under the same setting as in Proposition~\ref{prop:emp-eig-decay-x},
    if $\ker k(X, Z) \subset \ker k(Z, Z)$ holds almost surely for the draws of $X$,
    we have
    \begin{align*}
        \E{\mu(\sqrt{k^Z - k^Z_{s, X}})}
        \le 2\sqrt{\sum_{i>M}\sigma_i}
        + 4\sqrt{\sum_{i>m}\sigma_i} \icml{ \\
        +\ }{+} \frac{\sqrt{k_{\max}}}{M}\left(\frac{80m^2\log(1+2M)}9 + 69\right)\icml{&}{}.
    \end{align*}
    for any integer $m\ge1$.
\end{prop}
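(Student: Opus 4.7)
The plan is to run the argument behind Corollary~\ref{cor:wce-iid} again, but with the kernel $k$ replaced by $k^Z$ and the landmark sample $Z$ replaced by $X$. Under the standing hypothesis $\ker k(X,Z)\subset\ker k(Z,Z)$, Proposition~\ref{prop:discrete} guarantees $k^Z_X=k^Z$ on the whole of $\X$ almost surely, so $k^Z_{s,X}$ is the rank-$s$ truncation of a genuine Mercer decomposition of $k^Z$ with respect to $\mu_X$, and (as in Lemma~\ref{lem:proj-trivial}) the induced map $(P^Z_{X,s})^\perp$ is an orthogonal projection in $\H_{k^Z}$ with $\lVert(P^Z_{X,s})^\perp k^Z(\cdot,x)\rVert_{\H_{k^Z}}^2=k^Z(x,x)-k^Z_{s,X}(x,x)$.

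With these objects in place I would apply Theorem~\ref{thm:low-dim-proj} to the kernel $k^Z$, the sample $X$ of size $M$, and the projection $P=(P^Z_{X,s})^\perp$. This produces
\[
    \E{\mu(\sqrt{k^Z-k^Z_{s,X}})}
    \le 2\,\E{\mu_X(\sqrt{k^Z-k^Z_{s,X}})}
    +4\sqrt{\sum_{i>m}\sigma_i^{(Z)}}
    +\frac{\sqrt{k^Z_{\max}}}{M}\!\left(\frac{80m^2\log(1+2M)}{9}+69\right),
\]
where $\sigma_i^{(Z)}$ are the eigenvalues of $\K^Z$. Since $k-k^Z$ is pointwise PSD, the operator $\K-\K^Z$ on $L^2(\mu)$ is positive semidefinite and Weyl's inequality gives $\sigma_i^{(Z)}\le\sigma_i$ and, trivially, $k^Z_{\max}\le k_{\max}$; these two monotonicity facts rewrite the last two terms in the desired form. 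For the empirical term Cauchy--Schwarz against $\mu_X$ combined with Jensen yields
\[
    \E{\mu_X(\sqrt{k^Z-k^Z_{s,X}})}
    \le \sqrt{\E{\mu_X(k^Z-k^Z_{s,X})}}
    \le \sqrt{\sum_{i>s}\sigma_i},
\]
the last step being Proposition~\ref{prop:emp-eig-decay-x}; the bound collapses to $\sqrt{\sum_{i>M}\sigma_i}$ in the rank-saturation regime $s\ge M$ that the statement records.

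The main obstacle is justifying the use of Theorem~\ref{thm:low-dim-proj} for the projection at hand: $(P^Z_{X,s})^\perp$ depends on the entire sample $X$ through both the matrix $h_X(Z,Z)$ and the Mercer basis it induces, so one has to check that the Rademacher-complexity symmetrization argument behind Theorem~\ref{thm:low-dim-proj} tolerates a projection which is measurable with respect to the sample itself. A smaller but real subtlety is that $\K^Z$ is only determined via the $\mu$-almost everywhere identity $k^Z=k^Z_\mu$ of Proposition~\ref{prop:ae}, so the Weyl comparison $\sigma_i^{(Z)}\le\sigma_i$ must be argued at the level of Hilbert--Schmidt operators on $L^2(\mu)$ rather than pointwise on kernels.
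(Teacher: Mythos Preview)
Your approach is correct and leads to the same bound, but the paper organises the argument differently. Rather than passing to $\H_{k^Z}$ and re-running Theorem~\ref{thm:low-dim-proj} for the smaller kernel, the paper stays in $\H_k$ throughout: it verifies by a direct matrix computation (exploiting the hypothesis in the form $H^+Hk(Z,Z)^+=k(Z,Z)^+$) that the family $\{\sqrt{\kappa_i}f_i:\kappa_i>0\}$ is already orthonormal in $\H_k$, defines $P$ as the $\H_k$-orthogonal projection onto $\mathrm{span}\{\sqrt{\kappa_i}f_i:i>s\}$, checks $\lVert Pk(\cdot,x)\rVert_{\H_k}^2=k^Z(x,x)-k^Z_{s,X}(x,x)$, and then applies Theorem~\ref{thm:low-dim-proj} for the original kernel $k$ and sample $X$. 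This produces the $\sigma_i$ and $k_{\max}$ directly, so no Weyl step is needed. Your route is conceptually tidier (just change the kernel and quote the theorem), at the price of the extra monotonicity argument $\sigma_i^{(Z)}\le\sigma_i$.

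Two remarks on your write-up. First, the appeal to Lemma~\ref{lem:proj-trivial} is loose: that lemma concerns the Nystr\"om map $k(\cdot,x)\mapsto k^Z_s(\cdot,x)$, not a Mercer truncation of $k^Z$ with respect to an \emph{independent} empirical measure $\mu_X$. What you actually need is that $\{\sqrt{\kappa_i}f_i:\kappa_i>0\}$ is orthonormal in $\H_{k^Z}$, which does follow once $k^Z=\sum_i\kappa_if_i\otimes f_i$ holds as an identity of functions (guaranteed here by $k^Z_X=k^Z$) with the $f_i$ linearly independent; this is precisely the step the paper carries out explicitly in $\H_k$. Second, the measurability concern you raise is a non-issue: Theorem~\ref{thm:low-dim-proj} is stated for projections that may depend on the sample. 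Finally, both your argument and the paper's proof yield $2\sqrt{\sum_{i>s}\sigma_i}$ for the empirical term; the $M$ in the displayed statement appears to be a typo, so your ``rank-saturation'' manoeuvre is unnecessary.
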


\begin{rem}\label{rem:assumption}
    The assumption $\ker k(X, Z) \subset \ker k(Z, Z)$
    seems to be very hard to check in practice.
    An example with this property is $(\X, k, \mu)$ such that
    $\X = \R^D$ with $D, M > \ell$, the kernel $k$ is just the Euclidean inner product on $\R^D$,
    and $\mu$ is given by a Gaussian distribution with a nonsingular covariance matrix.

    This said, we have some ways to avoid this issue in practice.
    One way is to use $X\cup Z$ instead of $X$ so that the condition automatically holds.
    Then, the above order of estimate should still hold
    when $\ell\ll M$,
    though it complicates the analysis.
    Another way is effective when we use $k_X^Z$ for constructing a kernel quadrature
    from an empirical measure given by $X$ itself; see the next section for details.
\end{rem}

\section{Application to Kernel Quadrature}\label{sec:kernel quadrature}
Let us give error bounds for kernel quadrature
as a consequence of the previous sections.
We are mainly concerned with the kernel quadrature of the form
\eqref{eq:kq-weighted} without weight, i.e., the case when $g=1$
for efficiently discretizing the probability measure $\mu$.

Given an $n$-point quadrature rule $Q_n: f\mapsto \sum_{i=1}^n w_if(x_i)$
with weights $w_i\in\R$ and points $x_i\in\X$,
the worst-case error of $Q_n$ with respect to the RKHS $\H_k$ and
the target measure $\mu$ is defined as
\begin{equation*}
    \wce(Q_n; \H_k, \mu):=\sup_{\lVert f\rVert_{\H_k}\le 1}
    \lvert Q_n(f) - \mu(f)\rvert.
\end{equation*}
Note that it is equal to
$\mmd_k(Q_n, \mu)$,
the maximum mean discrepancy (with $k$)
between $Q_n$ regarded as a (signed) measure
and $\mu$ \citep{gre06}.
We call $Q_n$ {\it convex} if it is a probability measure,
i.e., $w_i\ge0$ and $\sum_{i=1}^nw_i=1$.

Suppose we are given an $s$-rank kernel approximation
$k_\mathrm{app}(x, y)= \sum_{i=1}^s c_i\phi_i(x)\phi_i(y)$ with
$c_i\ge0$ and
$k - k_\mathrm{app}$ being positive definite ($\mu$-almost surely).
The following is taken from \citet[][Theorem 6 \& 8]{hayakawa21b}.

\begin{thm}\label{thm:pkq}
    If an $n$-point convex quadrature $Q_n$
    satisfies
    $Q_n(\phi_i) = \mu(\phi_i)$ for $1\le i\le s$ and
    $Q_n(\sqrt{k - k_\mathrm{app}}) \le \mu(\sqrt{k - k_\mathrm{app}})$,
    then we have
    \[
        \wce(Q_n; \H_k, \mu) \le 2\mu(\sqrt{k - k_\mathrm{app}}).
    \]
    Moreover, such a quadrature $Q_n$ exists with $n=s+1$.
\end{thm}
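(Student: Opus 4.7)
The plan is to exploit the sum-of-kernels decomposition $k = k_\mathrm{app} + (k - k_\mathrm{app})$. Since both summands are positive definite, Aronszajn's theory identifies $\H_k$ with $\{f_1 + f_2 : f_1\in\H_{k_\mathrm{app}},\ f_2\in\H_{k-k_\mathrm{app}}\}$ and
\[
\lVert f\rVert_{\H_k}^2 = \inf\bigl\{\lVert f_1\rVert_{\H_{k_\mathrm{app}}}^2 + \lVert f_2\rVert_{\H_{k-k_\mathrm{app}}}^2 : f = f_1 + f_2\bigr\}.
\]
Given $f\in\H_k$ with $\lVert f\rVert_{\H_k}\le 1$, I would fix such a minimizing split $f = f_1 + f_2$, so that each summand norm is at most $1$, and bound $|Q_n(f) - \mu(f)|$ piece by piece.

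For the first piece, $f_1$ is a linear combination of $\phi_1,\ldots,\phi_s$, so the hypothesis $Q_n(\phi_i) = \mu(\phi_i)$ gives $Q_n(f_1) = \mu(f_1)$. For the second, Cauchy--Schwarz in $\H_{k-k_\mathrm{app}}$ yields the pointwise bound $|f_2(x)|\le \lVert f_2\rVert_{\H_{k-k_\mathrm{app}}}\sqrt{(k-k_\mathrm{app})(x,x)}$, and since $Q_n$ is a probability measure, $|Q_n(f_2)|\le Q_n(|f_2|)\le \lVert f_2\rVert\,Q_n(\sqrt{k-k_\mathrm{app}})$, with the analogue for $\mu$. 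Combining with the assumption $Q_n(\sqrt{k-k_\mathrm{app}})\le \mu(\sqrt{k-k_\mathrm{app}})$ and the triangle inequality,
\begin{align*}
|Q_n(f) - \mu(f)| &= |Q_n(f_2) - \mu(f_2)| \\
&\le \lVert f_2\rVert\bigl(Q_n(\sqrt{k-k_\mathrm{app}}) + \mu(\sqrt{k-k_\mathrm{app}})\bigr) \\
&\le 2\mu(\sqrt{k-k_\mathrm{app}}),
\end{align*}
and taking the supremum over $f$ delivers the worst-case bound.

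For the existence claim with $n = s+1$, the plan is a Tchakaloff/Carath\'eodory-style recombination. Consider the moment map $\Phi: \X\to\R^{s+1}$, $x\mapsto (\phi_1(x), \ldots, \phi_s(x), \sqrt{(k-k_\mathrm{app})(x,x)})$. The measure $\mu$ itself is feasible, hitting the target $s$-tuple $(\mu(\phi_1),\ldots,\mu(\phi_s))$ while keeping the last coordinate equal to $\mu(\sqrt{k-k_\mathrm{app}})$, so the feasible set of probability measures is non-empty. I would then extract a basic feasible solution of the associated linear program -- minimize $\nu(\sqrt{k-k_\mathrm{app}})$ subject to the $s$ equalities $\nu(\phi_i)=\mu(\phi_i)$ and the normalization $\nu(\X)=1$ -- among finitely supported probability measures; such a vertex is supported on at most $s+1$ atoms. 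The main obstacle is shaving the atom count from the $s+2$ that a naive Carath\'eodory in $\R^{s+1}$ gives down to $s+1$: this hinges on the last constraint being an inequality rather than an equality, so that only $s$ equality constraints plus normalization pin down the vertex, and the remaining coordinate is free to inherit the inequality $\nu(\sqrt{k-k_\mathrm{app}})\le \mu(\sqrt{k-k_\mathrm{app}})$ rather than equality.
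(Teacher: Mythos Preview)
Your argument is correct on both counts. The Aronszajn sum-of-kernels decomposition cleanly separates the approximation error into the finite-rank part (killed by the moment-matching hypothesis) and the remainder (controlled pointwise via the reproducing property and then integrated using convexity of $Q_n$); the chain of inequalities you wrote is exactly right. For the existence claim your LP/Carath\'eodory reasoning is sound, and you correctly identify the key point that the inequality constraint on $\sqrt{k-k_\mathrm{app}}$, as opposed to an equality, is what allows the basic feasible solution to live on $s+1$ rather than $s+2$ atoms.

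One remark: the paper does not actually prove this theorem---it quotes it verbatim from \citet[][Theorem~6~\&~8]{hayakawa21b}---so there is no in-paper proof to compare against. Your approach is essentially the one taken in that reference: the bound there is derived from the same RKHS splitting and pointwise Cauchy--Schwarz estimate, and the existence part there is phrased as a Tchakaloff/recombination statement (their Theorem~8 and the surrounding discussion of convex hulls and linear programming). So while strictly speaking the paper offers nothing to compare to, your proof matches the cited source's argument closely.
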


Although there is a randomized algorithm
for constructing the $Q_n$ stated in the above theorem
\citep[][Algorithm 2 with modification]{hayakawa21b},
it has two issues;
it requires exact values of $\mu(\phi_i)$ (and $\mu(\sqrt{k - k_\mathrm{app}})$)
and its computational complexity has no useful upper bound
unless we have additional assumptions
such as well-behaved moments of test functions $\phi_i$ \citep{hayakawa21a} or structure like a product kernel with a product measure \citep{hayakawa22hyper}.
This said, we can deduce updated convergence results
for outputs of the algorithm as in Remark \ref{rem:result}.

\subsection{Kernel Recombination}
Instead of considering an ``exact" quadrature,
what we do in practice in this low-rank approach is matching the integrals
against a large empirical measure \citep[see also][Section 6]{ada22},
say $\mu_Y=\frac1N\sum_{i=1}^N\delta_{y_i}$ with $Y=(y_i)_{i=1}^N$.
If we have
\begin{equation}
    \begin{cases}
        Q_n(\phi_i)=\mu_Y(\phi_i),\quad 1\le i\le s,\\
        Q_n(\sqrt{k - k_\mathrm{app}})\le \mu_Y(\sqrt{k - k_\mathrm{app}}),
    \end{cases}
    \label{eq:rec-constraint}
\end{equation}
then, from Theorem \ref{thm:pkq} with a target measure $\mu_Y$
and the triangle inequality of MMD,
we have
\begin{align}
    \wce(Q_n; \H_k, \mu)
    &\le \mmd_k(Q_n, \mu_Y) + \mmd_k(\mu_Y, \mu)\nonumber\\
    \icml{}{&}\le 2\mu_Y\icml{&}{}(\sqrt{k-k_\mathrm{app}}) + \mmd_k(\mu_Y, \mu).
    \label{eq:estimate-empirical}
\end{align}
Indeed, such a quadrature $Q_n$ with $n=s+1$
and points given by a subset of $Y$
can be constructed
via an algorithm called {\it recombination} \citep{lit12,tch15,cos20,hayakawa21b}.

Existing approaches of this kernel recombination
have then been using an approximation $k_\mathrm{app}$ typically
given by $k_s^Z$ whose randomness is independent from the sample $Y$,
but it is not a necessary requirement as long as
we can expect an efficient bound of $\mu_Y(\sqrt{k-k_\mathrm{app}})$
in some sense.
Another small but novel observation is that $k-k_\mathrm{app}$
being positive definite is only required on the sample $Y$
in deriving the estimate \eqref{eq:estimate-empirical};
not over the support of $\mu$ in contrast to Theorem~\ref{thm:pkq}.
These observations circumvent the issues mentioned in Remark~\ref{rem:assumption}
when using $k_\mathrm{app}= k_Y^Z$ ($k_{s,X}^Z$ with $X=Y$).

Let us now denote the kernel recombination in a form of function as
$Q_n = \mathrm{KQuad}(k_\mathrm{app}, Y)$,
where the output $Q_n$ is an $n$-point convex quadrature
satisfying $n=s+1$ and \eqref{eq:rec-constraint};
note that the constraint is slightly different from what is given
in \citet[Algorithm 1]{hayakawa21b},
but we can achieve \eqref{eq:rec-constraint}
by replacing $k_{1,\mathrm{diag}}$ with $\sqrt{k_{1,\mathrm{diag}}}$
in the cited algorithm.

We can now prove the performance of low-rank approximations
given in the previous section.
Indeed,  $k_{s,Y}^Z$ and $k_{s, \mu}^Z$
have the following same estimate.

\begin{thm}\label{thm:main-kq-emp}
    Let $Z \subset \X$ be a fixed subset and
    $Y$ be an $N$-point independent sample from $\mu$.
    Then, a random convex quadrature $Q_n = \mathrm{KQuad}(k_{s,Y}^Z, Y)$
    satisfies
    \begin{align}
        &\E{\wce(Q_n; \H_k, \mu)} \icml{\nonumber\\
        &}{}\le 2\mu(\sqrt{k - k^Z})
        + 2\sqrt{\sum_{i>s}\sigma_i}
        + \sqrt{\frac{c_{k,\mu}}N},
        \label{eq:main-kq-emp}
    \end{align}
    where 
    $c_{k,\mu}:=\mu(k)- \iint_{\X\times\X} k(x, y) \dd\mu(x)\dd\mu(y)$
    and the expectation is taken regarding the draws of $Y$.
    The estimate \eqref{eq:main-kq-emp}
    holds also for $Q_n=\mathrm{KQuad}(k_{s,\mu}^Z, Y)$.
\end{thm}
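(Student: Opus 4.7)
The plan is to apply the inequality \eqref{eq:estimate-empirical} with target measure $\mu_Y$ pointwise in the randomness of $Y$, take expectation, and treat the two resulting contributions separately. For generic $k_\mathrm{app}\in\{k_{s,Y}^Z,\,k_{s,\mu}^Z\}$ this gives
\[
    \E{\wce(Q_n;\H_k,\mu)} \le 2\,\E{\mu_Y(\sqrt{k-k_\mathrm{app}})} + \E{\mmd_k(\mu_Y,\mu)}.
\]

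For the MMD term, the standard variance identity $\E{\mmd_k(\mu_Y,\mu)^2} = c_{k,\mu}/N$ for RKHS mean embeddings of an i.i.d.~sample, combined with Jensen's inequality, yields the $\sqrt{c_{k,\mu}/N}$ contribution. For the power-function term I would use pointwise subadditivity of the square root on the diagonal,
\[
    \sqrt{k-k_\mathrm{app}} \le \sqrt{k-k^Z} + \sqrt{k^Z-k_\mathrm{app}},
\]
valid wherever both summands are nonnegative. The first piece integrates in expectation to $\mu(\sqrt{k-k^Z})$ by Fubini. For the second, Cauchy--Schwarz gives $\mu_Y(\sqrt{k^Z-k_\mathrm{app}}) \le \sqrt{\mu_Y(k^Z-k_\mathrm{app})}$, and Jensen reduces the task to bounding $\E{\mu_Y(k^Z-k_\mathrm{app})}$ by $\sum_{i>s}\sigma_i$. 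For $k_\mathrm{app}=k_{s,Y}^Z$ this is precisely Proposition~\ref{prop:emp-eig-decay-x}. For $k_\mathrm{app}=k_{s,\mu}^Z$, the $Y$-independence of the approximation lets one replace $\mu_Y$ by $\mu$ directly: $\E{\mu_Y(k^Z-k_{s,\mu}^Z)} = \mu(k^Z-k_{s,\mu}^Z) = \mu(k_\mu^Z-k_{s,\mu}^Z) \le \sum_{i>s}\sigma_i$, combining Proposition~\ref{prop:ae} with Theorem~\ref{thm:k-s-mu-z-decay}.

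The main obstacle is justifying that \eqref{eq:estimate-empirical} is applicable at all, namely that $k-k_\mathrm{app}$ is positive definite on the empirical support $Y$, so that recombination actually produces a valid convex rule. For $k_\mathrm{app}=k_{s,Y}^Z$ this is exactly where the observation preceding the theorem pays off: Proposition~\ref{prop:discrete} gives the equality $k_Y^Z(y,y') = k^Z(y,y')$ for all $y,y'\in Y$, and the truncation $k_Y^Z-k_{s,Y}^Z$ is a positive semi-definite kernel by the Mercer-type construction of Lemma~\ref{lem:or-eig}, so the decomposition $k-k_{s,Y}^Z = (k-k^Z) + (k^Z-k_{s,Y}^Z)$ is positive definite on $Y$. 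For $k_\mathrm{app}=k_{s,\mu}^Z$, Proposition~\ref{prop:ae} only supplies $k^Z=k_\mu^Z$ $\mu$-almost everywhere, but since $Y\sim\mu^{\otimes N}$ this holds at every $y_i$ almost surely, which is sufficient for the expectation bound. Once this positivity on $Y$ is settled, the remainder is a routine combination of subadditivity, Cauchy--Schwarz, and Jensen.
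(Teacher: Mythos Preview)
Your proposal is correct and follows essentially the same route as the paper's proof: apply \eqref{eq:estimate-empirical}, split the diagonal via $\sqrt{k-k_\mathrm{app}}\le\sqrt{k-k^Z}+\sqrt{k^Z-k_\mathrm{app}}$ on $Y$ (using Proposition~\ref{prop:discrete} or Proposition~\ref{prop:ae} to identify $k^Z$ with $k_Y^Z$ resp.\ $k_\mu^Z$ there), bound the second piece by Cauchy--Schwarz/Jensen and Proposition~\ref{prop:emp-eig-decay-x} or Theorem~\ref{thm:k-s-mu-z-decay}, and handle the MMD term via $\E{\mmd_k(\mu_Y,\mu)^2}=c_{k,\mu}/N$. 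Your explicit discussion of why $k-k_\mathrm{app}$ is positive semi-definite on $Y$ is a welcome addition that the paper leaves largely implicit.
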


\subsection{Numerical Examples}
In this section, we compare the numerical performance of
$k_{s, Y}^Z$ and $k_{s,\mu}^Z$ for kernel quadrature
with the conventional Nystr{\"o}m approximation
for a non-i.i.d.~$Z$
in the setting that we can explicitly compute the worst-case error.
\paragraph{Periodic Sobolev Spaces.}
The class of RKHS we use is
called periodic Sobolev spaces of functions on $\X = [0, 1]$
(a.k.a. Korobov spaces),
and given by the following kernel
for a positive integer $r$:
\[
    k_r(x, y) = 1 + \frac{(-1)^{r-1}(2\pi)^{2r}}{(2r)!} B_{2r}(\lvert x - y \rvert),
\]
where $B_{2r}$ is the $2r$-th Bernoulli polynomial \citep{wah90,bac17}.
We consider the case $\mu$ being the uniform measure,
where the eigenfunctions of the integral operator $\K$
are known to be $1, \sqrt{2}\cos(2\pi m\, \cdot),
\sqrt{2}\sin(2\pi m\, \cdot)$ with eigenvalues respectively $1, m^{-2r}, m^{-2r}$
for each positive integer $m$.
This RKHS is commonly used for measuring the performance of kernel quadrature
methods \citep{kan16,bac17,bel19,hayakawa21b}.
We also consider its products:
$k_r^{\otimes d}(\bm{x}, \bm{y}) = \prod_{i=1}^d k_r(x_i, y_i)$
and $\mu$ being the uniform measure on the hypercube $\X = [0, 1]^d$.

By considering the eigenvalues,
we can see that
$h_\mu= k_{2r}^{\otimes d}$ for each kernel $k_r^{\otimes d}$
from Remark \ref{rem:square-kernel}.

\paragraph{Experiments.}
In the experiments for the kernel $k_r^{\otimes d}$,
we compared the worst-case error
of $n$-point kernel quadrature rules
given by $Q_n = \mathrm{KQuad}(k_\mathrm{app}, Y)$
with $k_\mathrm{app} = k_s^H, k_s^Z, k_{s, Y}^Z, k_{s,\mu}^Z$
($s=n-1$)
under the following setting:
\begin{itemize}
    \item $Y$ is an $N$-point independent sample from $\mu$
    with $N=n^2$ (Figure~\ref{fig1}) or $N=n^3$ (Figure~\ref{fig2}).
    \item $H$ is the uniform grid $\{i/n\mid i=1, \ldots, n\}$ ($d=1$)
    or the Halton sequence with Owen scrambling \citep{hal60,owe17} ($d\ge2$).
    \item $Z$ is the union of $H$ and another $20n$-point independent sample
    from $\nu^{\otimes d}$, where $\nu$ is the $1$-dimensional $(2, 5)$-Beta distribution,
    whose density is proportional to $x(1-x)^4$ for $x\in[0, 1]$.
\end{itemize}

We additionally compared `{\bf Monte Carlo}':
uniform weights $1/n$ with
i.i.d.~sample $(x_i)_{i=1}^n$ from $\mu$,
`{\bf Uniform Grid}' ($d=1$):
points in $H$ with uniform weights $1/n$
(known to be {\it optimal} for each $n$),
and 
`{\bf Halton}' ($d\ge2$):
points in an independent copy of $H$ with uniform weights $1/n$.

The aim of this experiment was to see if
the proposed methods ($k_{s,Y}^Z$ and $k_{s,\mu}^Z$)
can actually recover a `good' subspace of the RKHS given by $k^Z$
with $Z$ not summarizing $\mu$.
To do so, we mixed $H$ (a `good' summary of $\mu$) and an i.i.d.~sample
from $\nu$ to determine $Z$.

Figure~\ref{fig1} shows the results
for $(d, r)=(1, 1), (2, 1), (3, 3)$
with $N=n^2$ and $n=4, 8, 16, 32, 64, 128$.
From Figure~\ref{fig1}(a, b),
we can see that our methods indeed recover (and perform slightly better than)
the rate of $k^H$ from a contaminated sample $Z$.
In Figure~\ref{fig1}(c), the four
low-rank methods all perform equally well,
and it seems that the dominating error is given by the term caused by $\mmd_k(\mu_Y, \mu)$.

\begin{figure}
    \vskip 0.2in
    \centering
    \includegraphics[width=0.9\hsize]{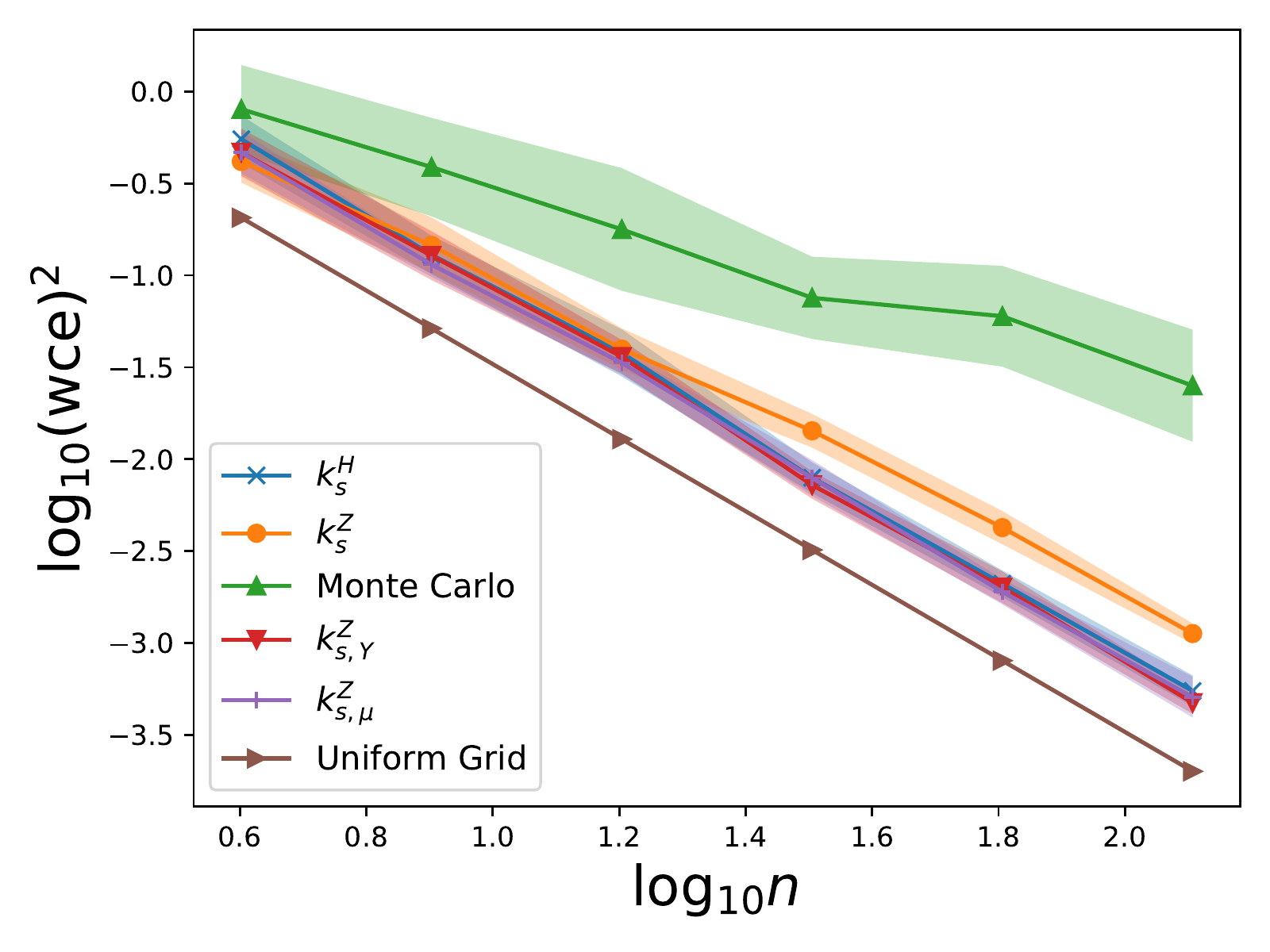}
    \vskip -0.1in
    \caption*{(a) $d=1$, $r=1$}
    \includegraphics[width=0.9\hsize]{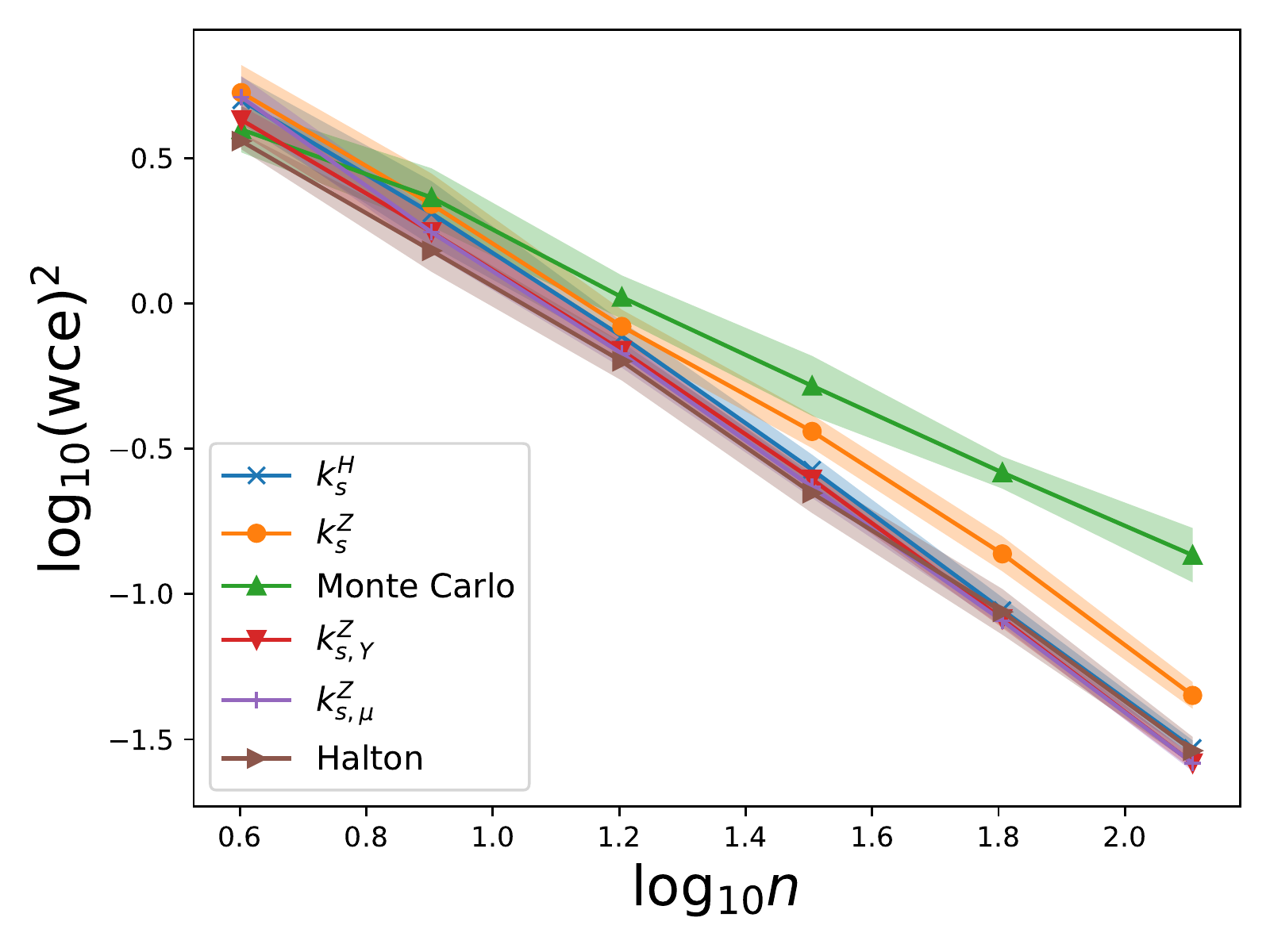}
    \vskip -0.1in
    \caption*{(b) $d=2$, $r=1$}
    \includegraphics[width=0.9\hsize]{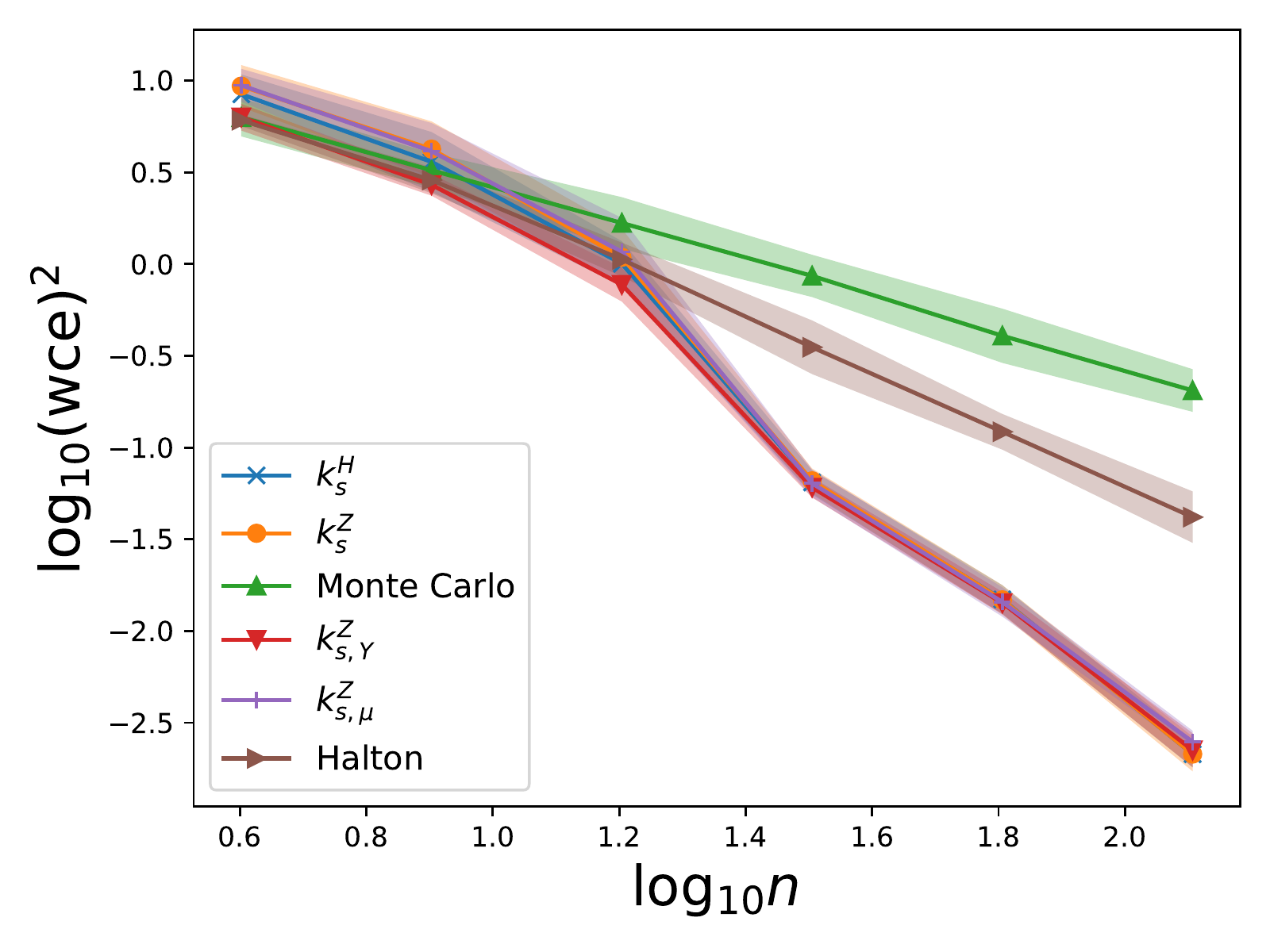}
    \vskip -0.1in
    \caption*{(c) $d=3$, $r=3$}
    \caption{Experiments in periodic Sobolev spaces
    with reproducing kernel $k_r^{\otimes d}$.
    Average of $\log_{10}(\wce(Q_n;\H_k, \mu)^2)$ over 20 samples plotted with their standard deviation.}
    \label{fig1}
    \vskip -0.2in
\end{figure}

\begin{figure}[ht]
    \vskip 0.2in
    \centering
    \includegraphics[width=0.9\hsize]{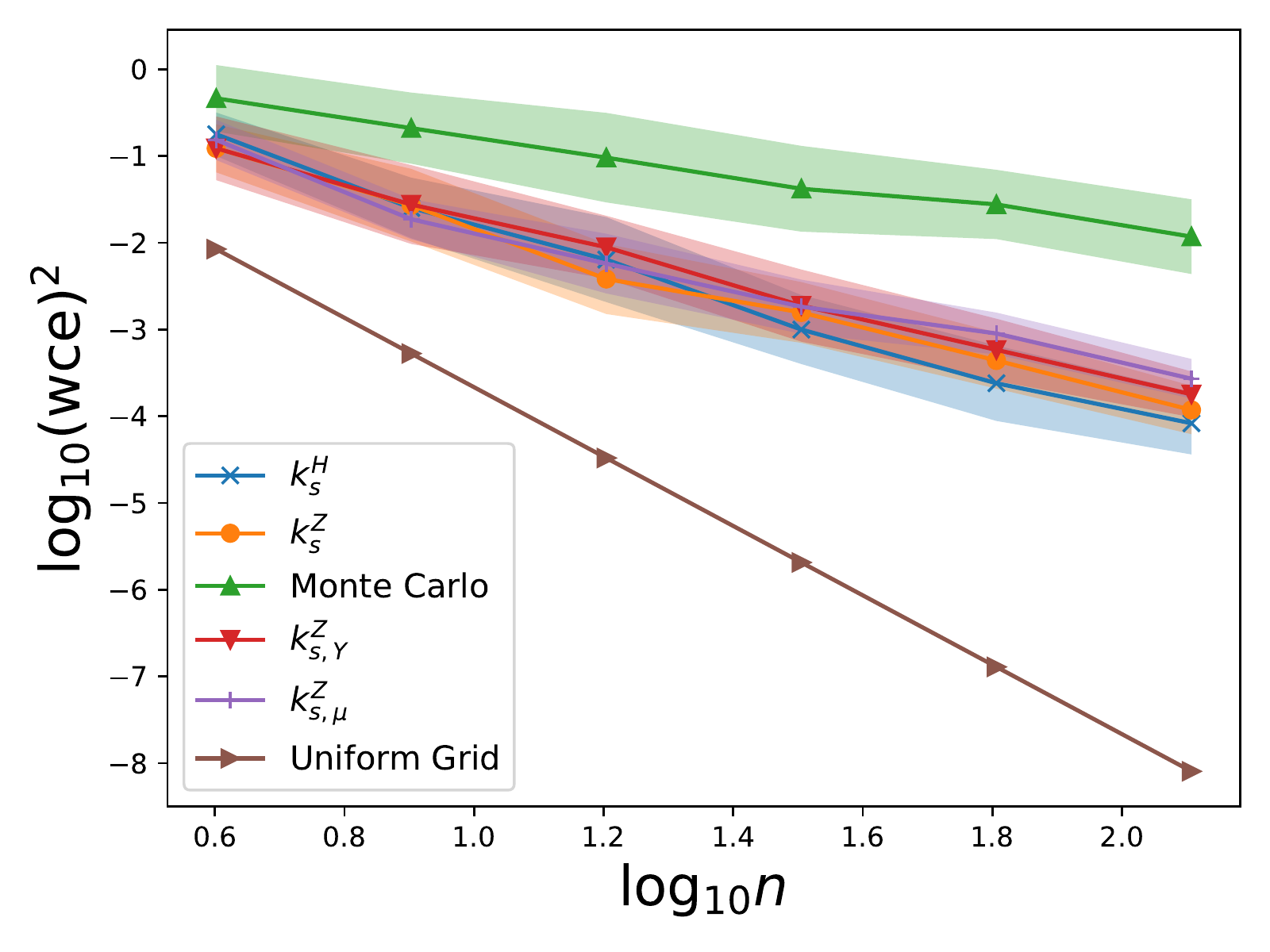}
    \vskip -0.1in
    \caption*{(a) $N = n^2$}
    \includegraphics[width=0.9\hsize]{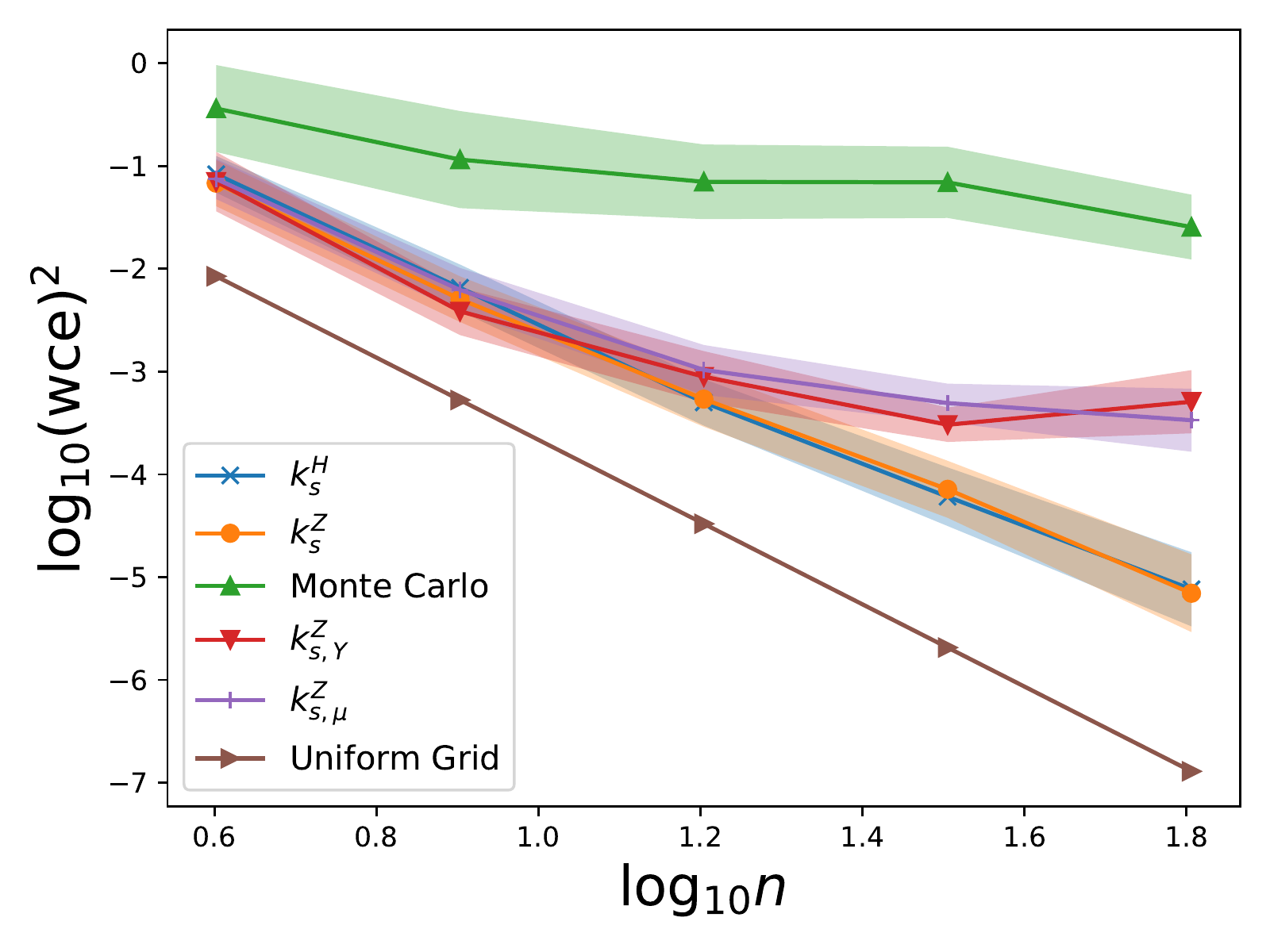}
    \vskip -0.1in
    \caption*{(b) $N = n^3$}
    \caption{Experiments in $k_2$
    with $N = n^2, n^3$ for recombination algorithms.
    Average of $\log_{10}(\wce(Q_n;\H_k, \mu)^2)$ over 20 samples plotted with their standard deviation.}
    \label{fig2}
    \vskip -0.2in
\end{figure}

Figure~\ref{fig2} shows the results for $(d, r)=(1, 2)$
with $N = n^2$ or $N = n^3$ and $n=4,8,16,32,64$.
In this case, we can see that $k_{s,Y}^Z$ or $k_{s,\mu}^Z$
eventually suffers from the numerical instability,
which is also reported by \citet{san16}.
Since their error inflation is not completely
hidden even in the case $N=n^2$
unlike the previous experiments,
one possible reason for the instability
is that taking the pseudo-inverse
of $k(Z, Z)$ or $h_\mu(Z, Z)^{1/2}$ in the algorithm
becomes highly unstable when the spectral decay is fast.
Although they have preferable guarantees in theory,
its numerical error seems to harm the overall efficiency,
and this issue needs to be addressed e.g. by circumventing
the use of pseudo-inverse in future work.

\begin{rem}\label{rem:k-s-z-stable}
    Unlike the kernel quadrature with $k_{s,\mu}^Z$ or $k_{s,Y}^Z$,
    that with $k_s^Z$ does not suffer from
    a similar numerical instability despite the use of $k(Z, Z)^+_s$.
    This phenomenon can be explained by the nature of 
    \citet[][Algorithm 1]{hayakawa21b};
    it only requires (stable)
    {\it test functions} $\phi_i = u_i^\top k(Z, \cdot)$
    ($i=1,\ldots,s$) for its equality constraints,
    where $u_i$ is the $i$-th eigenvector of $k(Z, Z)$,
    while the (possibly unstable)
    diagonal term $k_s^Z(x, x)$ appears
    in the inequality constraint,
    which can empirically be omitted \citep[][Section E.2]{hayakawa21b}.
\end{rem}

\paragraph{Computational Complexity.}
By letting $\ell, N$ (larger than $s$)
respectively be the cardinality of $Z$ and $Y$,
we can express the computational steps
of $\mathrm{KQuad}(k_\mathrm{app}, Y)$
with $k_\mathrm{app} = k_s^Z, k_{s,Y}^Z, k_{s,\mu}^Z$
as follows:
\begin{itemize}
    \item Using $k_s^Z$ takes $\ord{s\ell N + s\ell^2 + s^3\log(N/s)}$,
    but it can be reduced to $\ord{\ell N + s\ell^2 + s^3\log(N/s)}$
    by omitting the (empirically unnecessary) inequality constraint
    \citep[][Remark 2]{hayakawa21b}.
    \item Using $k_{s,Y}^Z$ takes
    $\ord{\ell^3 + \ell^2 N + s^3\log(N/s)}$,
    where $\ord{\ell^3}$ and $\ord{\ell^2 N}$ respectively
    come from computing $k(Z, Z)^+$ and $h_Y(Z, Z)$.
    \item Using $k_{s,\mu}^Z$ takes
    $\ord{\ell^3 + s\ell N + s^3\log(N/s)}$
    (if $h_\mu$ available), where $\ord{\ell^3}$
    is from computing $k(Z, Z)^+$.
\end{itemize}
For example, in the case of Figure~\ref{fig1}(c) with $n=128$,
the average time per one trial was respectively
26.5, 226, 216 seconds for $k_s^Z, k_{s,Y}^Z, k_{s,\mu}^Z$,
while it was 52.6, 57.8, 41.2 seconds for the case
of Figure~\ref{fig2}(b) with $n=64$.\footnote{
All the experiments were conducted on a MacBook Pro with Apple M1 Max chip and 32GB unified memory.
Code is available at the {\ttfamily nystrom} folder in \url{https://github.com/satoshi-hayakawa/kernel-quadrature}.}

\section{Concluding Remarks}
In this paper,
we have studied the performance
of several Nystr{\"o}m-type approximations $k_\mathrm{app}$
of a positive definite kernel $k$ associated with
a probability measure $\mu$,
in terms of the error $\mu(\sqrt{k - k_\mathrm{app}})$.
We first improved the bounds for $k_s^Z$,
the conventional Nystr{\"o}m approximation based on
an i.i.d.~$Z$ and the use of SVD, by leveraging  results in statistical learning theory.
We then went beyond the i.i.d.~setting and considered general $Z$ including DPPs; we further introduced two competitors of $k_s^Z$,
i.e., $k_{s,\mu}^Z$ and $k_{s,X}^Z$,
which are given by directly computing the Mercer decomposition
of the finite-rank kernel $k^Z$ against
the measure $\mu$ and the empirical measure $\mu_X$,
respectively.
Finally, we used our results to improve
the theoretical guarantees for convex kernel quadrature \citet{hayakawa21b},
and provided numerical results to illustrate the difference
between the conventional $k_s^Z$ and
the newly proposed $k_{s,\mu}^Z$ and $k_{s,X}^Z$.

Despite its nice theoretical properties,
a limitation of our second contribution,
i.e., the proposed kernel approximations, is that
they involve the computation of a pseudo-inverse,
which can be numerically unstable
when there is a rapid spectral decay.
This point should be addressed in future work,
but one promising approach in the context of kernel quadrature
is to conceptually learn from the stability of $k_s^Z$
mentioned in Remark~\ref{rem:k-s-z-stable};
if we see the construction of the low-rank kernel
as optimization of the vectors $u_i$ for which
functions $u_i^\top k(Z, \cdot)$ well approximate
$\H_{k^Z}$ in terms of $L^2(\mu)$ metric,
we can possibly leverage the stability of convex optimization
for instance.

\section*{Acknowledgements}
The authors would like to thank Ken'ichiro Tanaka and the anonymous reviewers for helpful comments.
This work was supported in part by the EPSRC [grant number EP/S026347/1], in part by The Alan Turing Institute under the EPSRC grant EP/N510129/1, the Data Centric Engineering Programme (under the Lloyd’s Register Foundation grant G0095), the Defence and Security Programme (funded by the UK Government) and the Office for National Statistics \& The Alan Turing Institute (strategic partnership) and in part by the Hong Kong Innovation and Technology Commission (InnoHK Project CIMDA).

\bibliography{cite}
\bibliographystyle{icml2023}


\newpage
\appendix
\onecolumn

\section{Tools from statistical learning theory}\label{sec:stat-learn}
In this section, $\F$ always denotes a class of functions from $\X$ to $\R$,
i.e., $\F\subset\R^\X$.
Let us define the Rademacher complexity of $\F$
with respect to the sample $Z = (z_i)_{i=1}^\ell\subset\X$
as follows \citep[e.g., ][Definition 3.1]{moh18}:
\[
    \mathcal{R}_Z(\F):=\E{\sup_{f\in\F}
        \frac1\ell\sum_{j=1}^\ell s_jf(z_j)
    \lmid Z}
\]
where the conditional expectation is taken with regard to
the Rademacher variables,
i.e., i.i.d. variables $s_j$ uniform in $\{\pm1\}$.

The following is a version of the uniform law of large numbers,
though we only use the one side of the inequality.
\begin{prop}[{\citealp[][Theorem 3.3]{moh18}}]\label{prop:ulln}
    Let $Z$ be an $\ell$-point independent sample from $\mu$.
    If there is a $B>0$ such that $\lVert f\rVert_\infty\le B$ for every $f\in\F$,
    then with probability at least $1-\delta$, we have
    \[
        \sup_{f\in\F}(\mu(f) - \mu_Z(f)) \le 2\E{\mathcal{R}_Z(\F)}
        + \sqrt{\frac{2B^2}\ell \log\frac1\delta}.
    \]
\end{prop}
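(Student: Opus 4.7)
The plan is to use the classical two-step argument combining McDiarmid's bounded-differences inequality with a symmetrization that passes from an expected uniform deviation to the Rademacher complexity.

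First, I would define $\Phi(z_1,\ldots,z_\ell) := \sup_{f\in\F}(\mu(f) - \frac{1}{\ell}\sum_{j=1}^\ell f(z_j))$ and verify that $\Phi$ satisfies the bounded-differences condition: replacing a single coordinate $z_i$ by any $z_i'\in\X$ changes $\frac{1}{\ell}\sum_j f(z_j)$ by at most $2B/\ell$ uniformly in $f$, since $\lVert f\rVert_\infty\le B$, and this uniform bound survives the supremum. McDiarmid's inequality then gives
\[
\P{\Phi(Z) - \E{\Phi(Z)} > t} \le \exp\!\Bigl(-\tfrac{\ell t^2}{2B^2}\Bigr),
\]
and inverting this tail bound at level $\delta$ yields $\Phi(Z) \le \E{\Phi(Z)} + \sqrt{(2B^2/\ell)\log(1/\delta)}$ with probability at least $1-\delta$.

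Second, I would control the expectation $\E{\Phi(Z)}$ by $2\E{\mathcal{R}_Z(\F)}$ via symmetrization. Introduce an independent ``ghost'' sample $Z'=(z_j')_{j=1}^\ell$ i.i.d.\ from $\mu$, so that $\mu(f) = \E{\frac{1}{\ell}\sum_{j=1}^\ell f(z_j')\mid Z}$. Pulling this conditional expectation outside the supremum through Jensen's inequality and then merging the outer expectations produces
\[
\E{\Phi(Z)} \le \E{\sup_{f\in\F}\frac{1}{\ell}\sum_{j=1}^\ell\bigl(f(z_j')-f(z_j)\bigr)}.
\]
Since each pair $(z_j,z_j')$ is exchangeable, multiplying each difference by an i.i.d.\ Rademacher sign $s_j$ preserves the joint distribution; splitting by the triangle inequality and re-exploiting the symmetry of $s_j$ gives the factor two and isolates the conditional average over signs, i.e.\ $\E{\Phi(Z)} \le 2\E{\mathcal{R}_Z(\F)}$.

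The main obstacle here is not technical — both McDiarmid and symmetrization are standard — but careful bookkeeping: verifying the bounded-differences constant $2B/\ell$ (a factor of two is easy to lose), and making sure the final Rademacher term is the one defined with the conditional expectation over sign flips only, while the outer expectation is over the draws of $Z$, which is precisely the quantity $\E{\mathcal{R}_Z(\F)}$ appearing in Proposition~\ref{prop:ulln}.
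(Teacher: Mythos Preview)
Your argument is correct and is precisely the standard textbook proof: McDiarmid with bounded-differences constants $c_i=2B/\ell$ to control the deviation of $\Phi(Z)$ about its mean, followed by ghost-sample symmetrization to bound $\E{\Phi(Z)}$ by $2\E{\mathcal{R}_Z(\F)}$; the constants you track match the statement exactly. The paper does not give its own proof of Proposition~\ref{prop:ulln} but merely cites it from \citet[Theorem~3.3]{moh18}, whose proof is the same McDiarmid-plus-symmetrization route you outline.
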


For a pseudo metric $d$ on $\F$,
we denote the {\it $\ve$-convering number} of $\F$ by $\N(\F, d; \ve)$.
Namely, $\N(\F, d; \ve)$ is the infimum of positive integers $N$ such that
there exist $f_1, \ldots, f_N\in\F$ satisfying
$\min_{1\le i\le N}d(f_i, g)\le\ve$ for all $g\in\F$.

Let us define a pseudo-metric $d_Z(f, g):=\sqrt{\frac1\ell\sum_{j=1}^\ell (f(z_i) - g(z_i))^2}$.
The following assertion is a version of Dudley's integral entropy bound
\citep[][Lemma A.3; see \citet{sre10+} for a correction of the constant]{sre10}.
\begin{prop}[Dudley integral]\label{prop:dudley}
For any $\ell$-point sample $Z=(z_i)_{i=1}^\ell \subset\X$,
we have
    \[
        \mathcal{R}_Z(\mathcal{F})
        \le \frac{12}{\sqrt{\ell}}\int_0^\infty \sqrt{\log \N(\mathcal{F}, d_Z; \ve)}\dd\ve.
    \]
\end{prop}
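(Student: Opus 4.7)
The plan is to carry out the classical chaining argument due to Dudley. I fix a geometric mesh $\ve_k = 2^{-k}\ve_0$ for $k\ge0$, where $\ve_0$ is taken large enough that $\F$ sits inside a single $d_Z$-ball (so $T_0$ can be chosen to consist of a single function; the contribution of this level to $\mathcal{R}_Z(\F)$ vanishes in expectation because the Rademacher variables are centered). For each $k\ge1$, I select a minimal $\ve_k$-cover $T_k \subset \F$ of cardinality $\N(\F, d_Z; \ve_k)$, and for every $f\in\F$ I write $\pi_k(f)\in T_k$ for a nearest element, so that $d_Z(f, \pi_k(f))\le \ve_k$.

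Using the telescoping identity $f = \pi_0(f) + \sum_{k\ge1}(\pi_k(f) - \pi_{k-1}(f)) + (f - \lim_k \pi_k(f))$ and the fact that $d_Z(f, \pi_K(f)) \le \ve_K \to 0$ (so the tail term contributes negligibly to the Rademacher average by Cauchy--Schwarz), I push the supremum inside the sum to obtain
\[
  \mathcal{R}_Z(\F) \le \sum_{k\ge1} \mathbb{E}\!\left[\sup_{f\in\F}\frac{1}{\ell}\sum_{j=1}^\ell s_j(\pi_k(f)-\pi_{k-1}(f))(z_j) \,\middle|\, Z\right].
\]
At level $k$, the increments $\pi_k(f)-\pi_{k-1}(f)$ range over a finite set of cardinality at most $|T_k|\cdot|T_{k-1}| \le \N(\F,d_Z;\ve_k)^2$, and each has $d_Z$-norm bounded by $\ve_k + \ve_{k-1} = 3\ve_k$ by the triangle inequality. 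Massart's finite-class lemma (obtained by combining Hoeffding's inequality on Rademacher sums with a union bound, then integrating a sub-Gaussian tail) then yields the level estimate
\[
  \mathbb{E}[\,\cdot\,] \le \frac{3\ve_k}{\sqrt\ell}\sqrt{2\log \N(\F, d_Z; \ve_k)^2} \;=\; \frac{6\ve_k}{\sqrt\ell}\sqrt{\log\N(\F, d_Z; \ve_k)}.
\]

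Finally, I turn $\sum_{k\ge1} \ve_k\sqrt{\log\N(\F, d_Z; \ve_k)}$ into a Riemann upper sum for the Dudley integral by using $\ve_k = 2(\ve_k - \ve_{k+1})$ together with the monotonicity of $\ve\mapsto\N(\F, d_Z;\ve)$, giving
\[
  \mathcal{R}_Z(\F) \le \frac{12}{\sqrt\ell}\sum_{k\ge1}(\ve_k - \ve_{k+1})\sqrt{\log\N(\F, d_Z;\ve_k)} \le \frac{12}{\sqrt\ell}\int_0^\infty\sqrt{\log\N(\F, d_Z;\ve)}\dd\ve,
\]
where the integrand vanishes beyond the $d_Z$-diameter of $\F$, so the upper limit can equivalently be taken as $\ve_0$. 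The main obstacle is bookkeeping for the constant $12$: the chaining constant depends on the factor $\sqrt 2$ from Massart, the factor $3$ from the triangle inequality across two adjacent scales, the factor $2$ from the mesh-ratio conversion $\ve_k \mapsto \ve_k - \ve_{k+1}$, and a careful handling of the tail as $K\to\infty$ and of the trivial $k=0$ contribution. This is exactly the calibration addressed in the corrected version of \citet{sre10} cited in the statement.
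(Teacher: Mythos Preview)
The paper does not actually prove this proposition; it is quoted directly from the literature (Lemma~A.3 of \citet{sre10}, with the constant corrected in \citet{sre10+}). Your argument is the standard dyadic chaining proof that underlies that reference: a geometric mesh, Massart's finite-class lemma at each scale, and conversion of the sum into a Riemann upper bound for the entropy integral. The bookkeeping you give for the constant $12 = 3 \cdot 2 \cdot 2$ (triangle inequality across adjacent scales, $\sqrt{2\log N^2} = 2\sqrt{\log N}$, and the mesh-ratio $\ve_k = 2(\ve_k - \ve_{k+1})$) is correct, and the handling of the $k=0$ term and the $K\to\infty$ tail via Cauchy--Schwarz is the usual one. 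In short, your proof is correct and is essentially the proof behind the paper's citation.
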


The following is a straightforward modification of \citet[][Lemma 4]{sch20}
tailored to our setting.
It originates from an analysis of empirical risk minimizers,
and this kind of technique has also been known in earlier work under the name of local Rademacher complexities
\citep{gyorfi2006distribution,koltchinskii2006local,gine2006concentration}.
\begin{prop}\label{prop:sh}
    Let $\F\subset L^\infty(\mu)$ be a set of functions
    with $f\ge0$ and $\lVert f\rVert_{L^\infty(\mu)}\le F$ for all $f\in\F$,
    where $F>0$ is a constant.
    If $\hat{f}$ is a random function in $\F$ possibly depending on $Z$,
    then, for every $\ve>0$, we have
    \[
        \E{\mu(\hat{f})} \le 2\E{\mu_Z(\hat{f})} + \frac{F}\ell\left(
            \frac{80}9\log N + 64
        \right) + 5\ve,
    \]
    where $N:=\max\{3, \N(\F, \lVert\cdot\rVert_{L^1(\mu)}; \ve)\}$.
\end{prop}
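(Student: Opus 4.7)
The plan is to combine an $\ve$-covering argument with Bernstein's inequality for non-negative bounded functions, exploiting the self-bounding structure $\mathrm{Var}(f)\le F\mu(f)$ to extract the multiplicative factor $2$ on $\mu_Z(\hat f)$.

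First I would fix a minimum-cardinality $\ve$-cover $\{f_1,\ldots,f_N\}$ of $\F$ in $L^1(\mu)$. Since $0\le f_i\le F$, the per-sample variance satisfies $\mathrm{Var}(f_i(z_1))\le \mu(f_i^2)\le F\mu(f_i)$, and Bernstein's inequality gives, with probability at least $1-\delta$,
\[
    \mu(f_i)-\mu_Z(f_i)\le \sqrt{\frac{2F\mu(f_i)\log(1/\delta)}{\ell}}+\frac{2F\log(1/\delta)}{3\ell}.
\]
Applying $\sqrt{ab}\le a/2+b/2$ to the square-root term absorbs half of $\mu(f_i)$ into the left-hand side and yields the self-bounding inequality $\mu(f_i)\le 2\mu_Z(f_i)+c_1F\log(1/\delta)/\ell$ for an explicit constant $c_1$. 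A union bound over the $N$ cover elements lifts this to a simultaneous inequality with $\log(1/\delta)$ replaced by $\log(N/\delta)$.

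Next I would pass from the cover to $\hat f$ by choosing $f_{i^*}$ with $\mu(|\hat f-f_{i^*}|)\le\ve$. The trivial bounds $\mu(\hat f)\le\mu(f_{i^*})+\ve$ and $\mu_Z(f_{i^*})\le\mu_Z(\hat f)+\mu_Z(|\hat f-f_{i^*}|)$ combined with the cover inequality give
\[
    \mu(\hat f)\le 2\mu_Z(\hat f)+2\mu_Z(|\hat f-f_{i^*}|)+c_1F\log(N/\delta)/\ell+\ve.
\]
The main obstacle is uniformly controlling the random residual $\mu_Z(|\hat f-f_{i^*}|)$. The trick is a second-layer covering: by picking $f_j$ that is $\ve$-close to $\hat f$ in $L^1(\mu)$, the residual $|\hat f-f_{i^*}|$ is within $\ve$ of the deterministic function $|f_j-f_{i^*}|$ in $L^1(\mu)$, so in particular $\mu(|f_j-f_{i^*}|)\le 2\ve$. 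Each of the at most $N^2$ such deterministic pairwise differences is non-negative and bounded by $F$, so a second application of Bernstein's inequality (with variance bound $F\mu(|f_j-f_{i^*}|)\le 2F\ve$), the same AM--GM absorption, and a union bound over the $N^2$ pairs give $\mu_Z(|\hat f-f_{i^*}|)\le 2\ve+c_2F\log(N/\delta)/\ell$ uniformly. Substituting yields, on a $(1-\delta)$-event, $\mu(\hat f)\le 2\mu_Z(\hat f)+5\ve+c_3F\log(N/\delta)/\ell$, where the factor $5\ve=\ve+2\cdot 2\ve$ tracks exactly the $\ve$ contributions of the two approximation steps.

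Finally, to convert the high-probability inequality into the stated expectation bound I would integrate the tail: parametrising $\delta=e^{-t}$ and integrating over $t\ge 0$ turns the $c_3F\log(N/\delta)/\ell$ contribution into a term of the form $c_3F(\log N+1)/\ell$ on the expectation side. Careful bookkeeping of the absolute constants across the two Bernstein applications, the two AM--GM absorptions, and the tail integration pins down the coefficients $\tfrac{80}{9}$ of $\log N$ and the additive $64$. The hypothesis $N\ge 3$ plays no structural role beyond ensuring $\log N>0$, so that no degenerate case arises. The principal difficulty is the middle paragraph: transferring the self-bounding Bernstein inequality from the deterministic cover to the random residual $|\hat f-f_{i^*}|$ requires this second-layer covering, which is where the exact constants and the aggregation to $5\ve$ originate.
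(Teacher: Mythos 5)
Your overall architecture---an $\ve$-cover, Bernstein's inequality with the self-bounding variance $\mathrm{Var}(f)\le F\mu(f)$, an AM--GM absorption to create the factor $2$, a union bound, and a final tail integration---is close in spirit to the paper's proof (which works directly in expectation with a ghost sample $Z'$ and the normalized maximum $T=\max_{f\in\F_\ve}\sum_i(f(z_i)-f(z_i'))/r(f)$ with $r(f)=\max\{c\sqrt{\ell^{-1}\log N},\sqrt{\mu(f)}\}$, then applies Bernstein and integrates the tail of $T^2$). The gap is your middle step. To return from the cover element $f_{i^*}$ to $\hat f$ on the empirical side you must bound $\mu_Z(\lvert\hat f-f_{i^*}\rvert)$, and the ``second-layer covering'' does not do this: Bernstein plus a union bound controls the empirical means of the $N^2$ deterministic differences $\lvert f_j-f_{i^*}\rvert$, but the quantity you need is the empirical mean of the data-dependent residual $\lvert\hat f-f_j\rvert$, and $\ve$-closeness to a deterministic function in $L^1(\mu)$ gives no control of an average over the very sample $Z$ used to select $\hat f$. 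The reduction is circular: after the second layer you are left with exactly the term you set out to remove. Concretely, for atomless $\mu$ take $\F=\{F\}\cup\{F(1-\bm{1}_A):\mu(A)\le\ve/F\}$; the single function $F$ is an $\ve$-cover in $L^1(\mu)$, yet $\hat f=F(1-\bm{1}_{A_Z})$ with $A_Z$ a tiny neighbourhood of the sample points has $\mu_Z(\lvert\hat f-f_{i^*}\rvert)=F$, far above your claimed $2\ve+c_2F\log(N/\delta)/\ell$; note this example even has $\mu_Z(\hat f)=0$ while $\mu(\hat f)\ge F-\ve$, so no argument based on a bare $L^1(\mu)$-cover can give the stated conclusion.

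This cover-to-$\hat f$ transfer is also the delicate point in the paper's proof: it is the step \eqref{eq:sh1}, which charges only $2\ve$ for swapping $\hat f$ with $f^*$ on both the sample and the ghost sample, and the swap on the $Z$-side is justified when the cover controls $\lvert\hat f-f^*\rvert$ at the sample points---e.g.\ a uniform-norm cover, as in the original lemma of \citet{sch20} and as is actually constructed in the application: in Proposition~\ref{prop:aux} the matrix-ball cover satisfies $\lvert f_U(x)-f_V(x)\rvert\le\delta\sqrt{k_{\max}}$ for \emph{every} $x$, so there nothing is lost. If you strengthen your covering to $\lVert\cdot\rVert_\infty$ (or any cover valid pointwise on the sample), your residual terms are bounded by $\ve$ deterministically, the second layer becomes unnecessary, and the rest of your outline (self-bounding Bernstein per cover element, AM--GM, union bound, tail integration) does go through as a legitimate high-probability-first variant of the paper's expectation argument---though recovering the specific constants $\tfrac{80}{9}$ and $64$ would require mirroring the paper's choices of $r(f)$ and $c=\sqrt{8F/9}$ rather than your two-stage union bound.
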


\begin{proof}
    The proof here essentially follows the original proof,
    where we re-compute the constants as the condition is slightly different;
    see also \citet[][Theorem 2.6]{hayakawa-suzuki} and its remark.
     
    Let $Z^\prime = (z_1^\prime, \ldots z_\ell^\prime)$ be an independent copy of $Z$.
    Let $\F_\ve$ be an $\ve$-covering of $\F$ in $L^1(\mu)$
    with the cardinality $N$
    and $f^*$ be a random element of $\F_\ve$ such that
    $\mu(\lvert \hat{f}-f^*\rvert)\le \ve$.
    Then, we have
    \begin{equation}
        \left\lvert\E{\mu_Z(\hat{f})} - \E{\mu(\hat{f})}\right\rvert
        =\left\lvert\E{\frac1\ell\sum_{i=1}^\ell(\hat{f}(z_i)-\hat{f}(z_i^\prime))}\right\rvert
        \le \E{\left\lvert\frac1n\sum_{i=1}^\ell(f^*(z_i) - f^*(z_i^\prime))\right\rvert}
        +2\ve
        \label{eq:sh1}
    \end{equation}
    
    Define $T:= \max_{f\in\F_\ve}\sum_{i=1}^\ell(f(z_i) - f(z_i^\prime))/r(f)$,
    where we let $r(f):=\max\{c\sqrt{\ell^{-1}\log N}, \sqrt{\mu(f)}\}$
    for each $f\in\F_\ve$ with a constant $c>0$ fixed afterwards. Thus, we obtain
    \begin{equation}
        \E{\left\lvert\frac1n\sum_{i=1}^\ell(f^*(z_i) - f^*(z_i^\prime))\right\rvert}
        \le \E{\frac{r(f^*)T}\ell}
        \le \frac12\E{r(f^*)^2} + \frac1{2\ell^2}\E{T^2}.
        \label{eq:sh2}
    \end{equation}
    
    The first term can evaluated as
    \begin{equation}
        \E{r(f^*)^2}\le c^2\frac{\log N}\ell + \E{\mu(f^*)}
        \le c^2\frac{\log N}\ell + \mathbb{E}[\mu(\hat{f})] + \ve.
        \label{eq:sh3}
    \end{equation}
    For the second term,
    we first have
    \[
        \sum_{i=1}^\ell\E{\left(\frac{f(z_i) - f(z_i^\prime)}{r(f)}\right)^2}
        \le
        \sum_{i=1}^\ell\E{\frac{f(z_i)^2 + f(z_i^\prime)^2}{r(f)^2}}
        \le 2F\ell,
        \qquad f\in\F_\ve.
    \]
    Since
    we have $\lvert f(z_i) - f(z_i^\prime)\rvert/r(f)\le 2F/r(f)
    \le 2F\frac{\sqrt{\ell}}{c\sqrt{\log N}}$ uniformly for $f\in\F_\ve$,
    Bernstein's inequality combined with the union bound yields
    \[
        \P{T^2\ge t}=\P{T\ge\sqrt{t}}
        \le 2N\exp\left(-\frac{t}{4F(\ell + \frac{\sqrt{\ell t}}{3c\sqrt{\log N}})}\right)
        \le 2N\exp\left(-\frac{3c\sqrt{\log N}}{8F\sqrt{\ell}}\sqrt{t}\right)
    \]
    for $t\ge 9c^2\ell\log N$.
    Therefore, we have
    \begin{align*}
        \E{T^2}
        = \int_0^\infty \P{T^2\ge t} \dd t
        &\le 9c^2\ell\log N + \int_{9c^2\ell\log N}^\infty2N\exp\left(-\frac{3c\sqrt{\log N}}{8F\sqrt{\ell}}\sqrt{t}\right)\dd t\\
        &= 9c^2\ell\log N + 4N\left(
            8F\ell + \frac{64F^2\ell}{9c^2\log N}
        \right)\exp\left(
            - \frac{9c^2\log N}{8F}
        \right)
    \end{align*}
    Let us now set $c=\sqrt{8F/9}$ so that $9c^2=8F$.
    Then, we obtain $\E{T^2}\le 8F\ell\log N + 64F\ell$
    since $N\ge3$ by assumption.
    By combining it with \eqref{eq:sh1}--\eqref{eq:sh3},
    we finally obtain
    \[
        \left\lvert\E{\mu_Z(\hat{f})} - \E{\mu(\hat{f})}\right\rvert
        \le \frac12\E{\mu(\hat{f})} + \frac{(\frac{40}9 F\log N + 32F)}\ell + \frac52\ve,
    \]
    from which the desired inequality readily follows.
\end{proof}

\section{Proofs}\label{sec:proofs}
\subsection{Properties of the pseudo-inverse}
For a matrix $A\in\R^{m\times n}$,
its Moore--Penrose pseudo-inverse $A^+$
\citep{pen55}
is defined as the unique matrix $X\in\R^{n\times m}$
that satisfies
\[
    AXA=A,
    \quad XAX=X,
    \quad (AX)^\top = AX,
    \quad (XA)^\top = XA.
\]
It also satisfies that
$A^+ A$ is the orthogonal projection onto
the orthogonal complement
of $\ker A$ (the range of $A^\top$),
while $AA^+$ is the orthogonal projection onto the range of $A$
\citep{pen55,shi72}.
We use these general properties of $A^+$
throughout Section~\ref{sec:proofs}.
See e.g. \citet{dri05}
for the concrete construction of such a matrix.

\subsection{Proof of Lemma \ref{lem:proj-trivial}}
\begin{proof}
    Recall that
    we have the SVD $k(Z, Z) = U\mathop\mathrm{diag}(\lambda_1, \ldots,\lambda_\ell)U^\top$
    with an orthogonal matrix $U = [u_1, \ldots, u_\ell]$.
    and $\lambda_1\ge\cdots\ge\lambda_\ell\ge0$.
    By using this notation, we have
    \begin{equation}
        k_s^Z(x, y)
        = \sum_{\substack{1\le j\le s\\ \lambda_j>0}}\frac1\lambda_j
        (u_j^\top k(Z, x))(u_j^\top k(Z, y)).
        \label{eq:explicit-k-z}
    \end{equation}
    If we denote by $Q_j:\H_k\to\H_k$
    the projection onto $\mathop\mathrm{span}\{u_j^\top k(Z, \cdot)\}$,
    we have
    \begin{align}
        (u_j^\top k(Z, x))(u_j^\top k(Z, y))
        &= \ip{u_j^\top k(Z, \cdot), k(\cdot, x)}_{\H_k} \ip{u_j^\top k(Z, \cdot), k(\cdot, y)}_{\H_k}
        \nonumber\\
        &= \lVert u_j^\top k(Z, \cdot) \rVert_{\H_k}^2
        \ip{Q_jk(\cdot, x), Q_jk(\cdot, y)}_{\H_k}\nonumber\\
        &= \lambda_j \ip{Q_jk(\cdot, x), Q_jk(\cdot, y)}_{\H_k},
        \label{eq:emp-ip}
    \end{align}
    where the last inequality follows from $\ip{u_i^\top k(Z, \cdot), u_j^\top k(Z, \cdot)}_{\H_k}
    =u_i^\top k(Z, Z)u_j = \delta_{ij}\lambda_j$.
    Now let $\tilde{P}_{Z,s}$ be the orthogonal projection onto $\mathop\mathrm{span}\{u_j^\top k(Z, \cdot)\}_{j=1}^s$ in $\H_k$.
    We prove $\tilde{P}_{Z, s} = P_{Z, s}$.
    From the orthogonality of $\{u_j^\top k(Z, \cdot)\}_{j=1}^s$ we have
    $\tilde{P}_{Z, s} = \sum_{j=1}^sQ_j$ and
    \begin{align*}
        \ip{k(\cdot, x), k_s^Z(\cdot, y)}_{\H_k}= 
        k_s^Z(x, y)
        &= \sum_{j=1}^s \ip{Q_jk(\cdot, x), Q_jk(\cdot, y)}_{\H_k}\\
        &= \ip{\tilde{P}_{Z, s}k(\cdot, x), \tilde{P}_{Z, s}k(\cdot, y)}_{\H_k}
        = \ip{k(\cdot, x), \tilde{P}_{Z, s}k(\cdot, y)}_{\H_k}
    \end{align*}
    for all $x,y\in\X$.
    In particular, $k_s^Z(\cdot, y) = \tilde{P}_{Z, s}k(\cdot, y)$,
    so we have $\tilde{P}_{Z, s} = P_{Z, s}$.
\end{proof}

\subsection{Proof of Lemma \ref{lem:emp}}
\begin{proof}
    The inequality follows from Cauchy--Schwarz.
    Let us prove the equality.

    We use the notation $Q_j$ from the proof of Lemma \ref{lem:proj-trivial}.
    We first obtain $P_Z k(\cdot, z_i) = k(\cdot, z_i)$ for $i=1,\ldots,\ell$,
    since $P_Z$ is a projection onto $\mathop\mathrm{span}\{k(\cdot, z_i)\}_{i=1}^\ell$.
    Thus, we have $P_{Z, s}^\perp k(\cdot, z_i) = (P_Z - P_{Z, s}) k(\cdot, z_i)
    = (Q_{s+1}+\cdots+Q_\ell) k(\cdot, z_i)$,
    and so
    \begin{align*}
        \frac1\ell\sum_{i=1}^\ell
        \lVert P_{Z,s}^\perp k(\cdot, z_i)\rVert_{\H_k}^2
        =\frac1\ell\sum_{i=1}^\ell\sum_{\substack{s+1\le j\le \ell\\ \lambda_j>0}}
            \frac1{\lambda_j}
            (u_j^\top k(Z, z_i))^2
    \end{align*}
    by using \eqref{eq:emp-ip}.
    Since $k(Z, Z)=U\mathop\mathrm{diag}(\lambda_1, \ldots, \lambda_\ell)U^\top
    = \sum_{i=1}^\ell \lambda_i u_i u_i^\top$,
    we can explicitly calculate
    \[
        u_j^\top k(Z, z_i) = u_j^\top \sum_{i=1}^\ell \lambda_i u_i u_i^\top \bm{1}_j
        = \lambda_j u_i^\top\bm{1}_j,
    \]
    where $\bm{1}_j \in\R^\ell$ is the vector with $1$ in the $j$-th coordinate and
    $0$ in the other coordinates.
    As $U$ is an $\ell\times\ell$ orthogonal matrix,
    we actually have $\sum_{i=1}^\ell (u_i^\top\bm{1}_j)^2 = 1$ for each $j=1,\ldots,\ell$.
    \begin{equation}
        \frac1\ell\sum_{i=1}^\ell\sum_{\substack{s+1\le j\le \ell\\ \lambda_j>0}}
            \frac1{\lambda_j}
            (u_j^\top k(Z, z_i))^2
        = \frac1\ell \sum_{i=1}^\ell\sum_{j=s+1}^\ell
        \lambda_j (u_i^\top \bm{1}_j)^2
        = \frac1\ell\sum_{j=s+1}^\ell \lambda_j,
        \label{eq:empirical-trace}
    \end{equation}
    and the proof is complete.
\end{proof}

\subsection{Proof of Lemma \ref{lem:trace}}
\begin{proof}
    From the min-max principle,
    we have
    \begin{equation}
        \lambda_j = \min_{\substack{V_{j-1}\subset \R^\ell \\ \dim V_{j-1} \le j-1}}
        \max_{x_j \in V_{j-1}^\perp,\ \lVert x_j\rVert_2 = 1} x_j^\top k(Z, Z)x_j,
        \label{eq:min-max-mat}
    \end{equation}
    where $V_{j-1}$ is a linear subspace of $\R^\ell$.
    Recall the Mercer expansion $k(x, y) = \sum_{i=1}^\infty \sigma_ie_i(x)e_i(y)$.
    By letting $e_j(Z) = (e_j(z_1), \ldots, e_j(z_\ell))^\top\in\R^\ell$,
    we can write
    $k(Z, Z) = \sum_{i=1}^\infty \sigma_ie_i(Z)e_i(Z)^\top$.
    We assume that this equality holds in the following.
    We especially write the remainder term as
    $k_{s+1}(Z, Z):=k(Z, Z) - \sum_{i=1}^s \sigma_ie_i(Z)e_i(Z)^\top$
    
    Consider taking $V_s = \mathop\mathrm{span}\{e_1(Z), \ldots, e_s(Z)\}$
    and
    \[
        x_j\in\mathop\mathrm{argmax}_{x \in V_{j-1}^\perp,\ \lVert x\rVert_2 = 1}
        x^\top k(Z, Z)x,
        \qquad
        V_j = \mathop\mathrm{span}(V_{j-1} \cup \{x_j\})
    \]
    for $j=s+1, \ldots, \ell$
    in \eqref{eq:min-max-mat}.
    Then, $\lambda_j^\prime := x_j^\top k(Z, Z)x$ satisfies $\lambda_j\le\lambda_j^\prime$,
    and so we have
    \[
        \sum_{j=s+1}^\ell \lambda_j
        \le \sum_{j=s+1}^\ell \lambda_k^\prime
        = \sum_{j=s+1}^\ell x_j^\top k(Z, Z) x_j
        = \sum_{j=s+1}^\ell x_j^\top k_{s+1}(Z, Z) x_j,
    \]
    where we have used that $x_j^\top e_i(Z) = 0$ for any $i\le s < j$
    in the last inequality.
    By taking some $\{x_1 ,\ldots, x_s\}\subset\R^\ell$,
    we can make $\{x_1, \ldots, x_\ell\}$ a orthonormal basis of $\R^\ell$,
    so we obtain
    \[
        \sum_{j=s+1}^\ell \lambda_j
        \le \sum_{j=s+1}^\ell x_j^\top k_{s+1}(Z, Z) x_j
        \le \sum_{j=1}^\ell x_j^\top k_{s+1}(Z, Z) x_j
        = \mathop\mathrm{tr} k_{s+1}(Z, Z).
    \]
    
    Therefore, we have
    \[
        \frac1\ell\sum_{j=s+1}^\ell \lambda_j
        \le \frac1\ell \mathop\mathrm{tr} k_{s+1}(Z, Z)
        = \frac1\ell \sum_{i=1}^\ell k_{s+1}(z_i, z_i),
    \]
    and we obtain the desired inequality in expectation since
    $\E{k_{s+1}(z_i, z_i)}=\sum_{j=s+1}^\infty\sigma_j$.
\end{proof}

\subsection{Proof of Theorem \ref{thm:low-dim-proj}}
We first prove the following generic proposition by exploiting
the ingredients given in Section \ref{sec:stat-learn}.

\begin{prop}\label{prop:aux}
    Let $Q$ be an arbitrary deterministic $m$-dimensional orthogonal projection in $\H_k$.
    Then, for any random orthogonal projection $P$ possibly depending on $Z$,
    we have
    \begin{equation}
        \mu(\lVert PQk(\cdot, x)\rVert_{\Hil_k}) \le
        \mu_Z(\lVert PQk(\cdot, x)\rVert_{\Hil_k})
        + \sqrt{\frac{k_{\max}}\ell}\left(36m + \sqrt{2\log\frac1\delta}\right)
        \label{eq:main-root-hp}
    \end{equation}
    with probability at least $1-\delta$.
    
    Furthermore, with regard to the expectation, we also have
    \begin{equation}
        \E{\mu(\lVert PQk(\cdot, x)\rVert_{\Hil_k})}
        \le 2\E{\mu_Z(\lVert PQk(\cdot, x)\rVert_{\Hil_k})}
        + \frac{\sqrt{k_{\max}}}{\ell}\left(\frac{80m^2\log(1+2\ell)}9 + 69\right).
        \label{eq:main-root-ex}
    \end{equation}
\end{prop}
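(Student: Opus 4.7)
The plan is to reduce the infinite-dimensional projection class to a bounded subset of $\R^{m \times m}$ and then apply the tools from Appendix~\ref{sec:stat-learn}. First, I would fix an orthonormal basis $q_1, \ldots, q_m$ of $Q(\H_k)$ and set $q(x) := (q_1(x), \ldots, q_m(x))^\top \in \R^m$; note that $\lVert q(x)\rVert_2^2 = \lVert Qk(\cdot, x)\rVert_{\H_k}^2 \le k(x, x) \le k_{\max}$. Using the reproducing property to expand $Qk(\cdot, x) = \sum_i q_i(x) q_i$ and then the self-adjointness of $P$, we obtain
\begin{align*}
\lVert PQk(\cdot, x) \rVert_{\H_k}^2 = q(x)^\top M_P\, q(x), \qquad (M_P)_{ij} := \ip{Pq_i, Pq_j}_{\H_k},
\end{align*}
where $M_P \in \R^{m \times m}$ is symmetric with $0 \le M_P \le I$. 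Hence the class $\F := \{x \mapsto \lVert PQk(\cdot, x)\rVert_{\H_k}\}$ embeds in $\tilde{\F} := \{x \mapsto \lVert Bq(x)\rVert_2 : B \in \R^{m\times m},\ \lVert B\rVert_{\mathrm{op}} \le 1\}$ via $B := M_P^{1/2}$. The Lipschitz estimate $\lvert\lVert Bq(x)\rVert_2 - \lVert B' q(x)\rVert_2\rvert \le \lVert B-B'\rVert_{\mathrm{op}}\sqrt{k_{\max}}$ combined with the standard $(1+2/\eta)^{m^2}$ bound on the $\eta$-covering of the operator-norm unit ball in $\R^{m\times m}$ gives $\log \mathcal{N}(\tilde{\F}, d; \ve) \le m^2 \log(1 + 2\sqrt{k_{\max}}/\ve)$ for $d$ equal to either $d_Z$ or $\lVert\cdot\rVert_{L^1(\mu)}$.

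For \eqref{eq:main-root-hp}, I would plug this covering bound into Dudley's integral (Proposition~\ref{prop:dudley}) and, after substituting $u = \ve/\sqrt{k_{\max}}$, estimate
\begin{align*}
\mathcal{R}_Z(\F) \le \mathcal{R}_Z(\tilde{\F}) \le \frac{12m\sqrt{k_{\max}}}{\sqrt\ell}\int_0^1 \sqrt{\log(1+2/u)}\,du \le \frac{18m\sqrt{k_{\max}}}{\sqrt{\ell}},
\end{align*}
where the last bound is a routine numerical evaluation of the integral. Applying the uniform law of large numbers (Proposition~\ref{prop:ulln}) with envelope $B = \sqrt{k_{\max}}$ then yields $\sup_{f \in \F}(\mu(f) - \mu_Z(f)) \le \sqrt{k_{\max}/\ell}\bigl(36m + \sqrt{2\log(1/\delta)}\bigr)$ with probability at least $1-\delta$, and specializing to $f_P$ for the random $P$ gives \eqref{eq:main-root-hp}.

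For the expectation bound \eqref{eq:main-root-ex}, I would apply Proposition~\ref{prop:sh} to $\F$ with $F = \sqrt{k_{\max}}$, $\hat f(x) = \lVert PQk(\cdot, x)\rVert_{\H_k}$, and $\ve := \sqrt{k_{\max}}/\ell$; the $L^1(\mu)$ covering estimate gives $\log N \le m^2 \log(1+2\ell)$ (assuming this exceeds $\log 3$, otherwise the inequality is trivial), and the three residual terms combine as
\begin{align*}
\frac{F}{\ell}\Bigl(\frac{80}{9}\log N + 64\Bigr) + 5\ve \le \frac{\sqrt{k_{\max}}}{\ell}\Bigl(\frac{80 m^2 \log(1+2\ell)}{9} + 69\Bigr),
\end{align*}
which is \eqref{eq:main-root-ex}. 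The hard part, and the conceptual core of the argument, is the opening reduction: one must recognize that although $P$ is a projection on an infinite-dimensional space, $\lVert PQk(\cdot, x)\rVert_{\H_k}$ depends only on the restriction of $P$ to the $m$-dimensional subspace $Q(\H_k)$, collapsing the class to a bounded subset of $\R^{m^2}$ on which the standard covering arguments apply; once this reduction is in place, the rest is a direct plug-in of the Appendix~\ref{sec:stat-learn} toolkit.
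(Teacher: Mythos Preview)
Your proposal is correct and follows essentially the same approach as the paper: both fix an orthonormal basis of $Q(\H_k)$, observe that $\lVert PQk(\cdot,x)\rVert_{\H_k}^2 = q(x)^\top M_P\,q(x)$ with $0\le M_P\le I_m$, embed into the class $\{x\mapsto\lVert Bq(x)\rVert_2 : \lVert B\rVert_{\mathrm{op}}\le1\}$, and then apply the $(1+2/\eta)^{m^2}$ covering bound followed by Dudley's integral plus Proposition~\ref{prop:ulln} for \eqref{eq:main-root-hp} and Proposition~\ref{prop:sh} with $\ve=\sqrt{k_{\max}}/\ell$ for \eqref{eq:main-root-ex}. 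The constants and intermediate estimates match the paper's exactly.
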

\begin{proof}
    Let $\{v_1, \ldots, v_m\}$ be an orthonormal basis of $Q\H_k$.
    Let also $\{u_i\}_{i\in I}$ and
    $\{u_i\}_{i\in J}$ be respectively
    an orthonormal basis of $P\H_k$ and $(P\H_k)^\perp$,
    so $\{u_i\}_{i\in I\cup J}$ is an orthonormal basis of $\H_k$.
    
    Let us compute $\lVert PQk(\cdot, x)\rVert_{\H_k}^2$.
    Since we have
    \[
        PQk(\cdot, x) = P\left(
            \sum_{j=1}^m \ip{v_j, k(\cdot, x)}_{\H_k} v_j
        \right)
        = \sum_{j=1}^m v_j(x) Pv_j
        = \sum_{i\in I}\sum_{j=1}^m v_j(x) \ip{u_i, v_j}_{\H_k} u_i
    \]
    (where we can exchange the summation as they converge in $\H_k$),
    we obtain
    \[
        \lVert PQk(\cdot, x)\rVert_{\H_k}^2
        = \sum_{i\in I}\left(\sum_{j=1}^m v_j(x) \ip{u_i, v_j}_{\H_k}\right)^2
        = \lVert A_{P,Q}\bm{v}_x\rVert_{\ell^2(I)}^2
        = \bm{v}_x^\top A_{P,Q}^* A_{P,Q}\bm{v}_x,
    \]
    where $\bm{v}_x = (v_1(x), \ldots, v_m(x))^\top \in\R^m$
    and $A_{P, Q}$
    is a linear operator $\R^m \to \ell^2(I)$
    given by $a=(a_1,\ldots,a_m)^\top \mapsto (\sum_{j=1}^m\ip{u_i, v_j}_{\H_k}a_j)_{i\in I}$,
    and $A_{P, Q}^*:\ell^2(I) \to \R^m$ is its dual (defined by the property $\ip{a, A_{P, Q}^* b}_{\R^m} = \ip{A_{P, Q}a, b}_{\ell^2(I)}$), which can be understood as the ``transpose'' of $A_{P, Q}$.
    Note that $A_{P, Q}^* A_{P, Q}$ can be regarded as an $m\times m$ matrix and we have
    \begin{equation*}
        (A_{P, Q}^* A_{P, Q})_{j, h}
        = \sum_{i\in I}\ip{u_i, v_j}_{\H_k}\ip{u_i, v_h}_{\H_k}
        = \ip{P v_j, P v_h}_{\H_k}.
    \end{equation*}
    We can also define $B_{P, Q}= A_{P^\perp, Q}$ by replacing $P$ with $P^\perp$.
    Then we have
    \[
        (A_{P, Q}^* A_{P, Q})_{j, h} + (B_{P, Q}^* B_{P, Q})_{j,h}
        = \ip{P v_j, P v_h}_{\H_k} + \ip{P^\perp v_j, P^\perp v_h}_{\H_k}
        = \ip{v_j, v_h}_{\H_k}
        = \delta_{jh},
    \]
    so $A^\top_{P, Q}A_{P, Q}$ is an $m\times m$
    positive semi-definite matrix
    with $A^\top_{P, Q}A_{P, Q} \le I_m$.
    
    It thus suffices to consider a uniform estimate of
    $\mu(\sqrt{\bm{v}_x^\top S \bm{v}_x}) - \mu_Z(\sqrt{\bm{v}_x^\top S \bm{v}_x})$
    with a positive semi-definite matrix $S \le I_m$.
    This $S$ can be written as $S=U^\top U$ by using a $U\in\R^{m\times m}$
    with $\lVert U\rVert_2\le 1$,
    so we shall solve the following problem:
    \begin{quote}
        Find a uniform upper bound of
        $\mu(\lVert U\bm{v}_x\rVert_2) - \mu_Z(\lVert U\bm{v}_x\rVert_2)$
        for any matrix $U\in\R^{m\times m}$ with $\lVert U\rVert_2 \le 1$.
    \end{quote}
    Now we can reduce our problem to a routine work of bounding the covering number of
    the function class $\F:=
    \{f_U:= x \mapsto \lVert U\bm{v}_x\rVert_2\mid U\in\mathcal{U}\}$,
    where $\mathcal{U}:=\{U\in\R^{m\times m}\mid \lVert U\rVert_2\le1\}$.
    
    For any $x\in\X$, we have
    \[
        \lVert \bm{v}_x \rVert_2^2 = \sum_{j=1}^\ell v_j(x)^2
        = \lVert Qk(\cdot, x)\rVert_{\H_k}^2
        \le \lVert k(\cdot, x)\rVert_{\H_k}^2
        = k(x, x).
    \]
    If $\mathcal{U}_\delta$ is a $\delta$-covering of $\mathcal{U}$,
    then $\{f_U\}_{U\in\mathcal{U}_\delta}$ gives a $\delta\sqrt{k_{\max}}$-covering.
    Indeed, for any $U, V\in \mathcal{U}$
    with $\lVert U -V \rVert_2\le\delta$, we have
    \[
        d_Z(f_U, f_V)^2
        =\frac1\ell\sum_{i=1}^\ell (\lVert U\bm{v}_{z_i}\rVert_2
        - \lVert V\bm{v}_{z_i}\rVert_2)^2
        \le \frac1\ell\sum_{i=1}^\ell \lVert (U - V)\bm{v}_{z_i}\rVert_2^2
        \le \delta^2 \frac1\ell\sum_{i=1}^\ell \lVert\bm{v}_{z_i}\rVert_2^2
        \le \delta^2k_{\max}.
    \]
    Here, we have the covering number bound
    $\log \N(\mathcal{U}, \lVert\cdot\rVert_2; \delta)\le m^2\log\left(1+\frac2\delta\right)$
    for $\delta\le1$ (and $0$ for $\delta\ge1$)
    as $\mathcal{U}$ can be seen as a unit ball of $\R^{m^2}$
    in a certain norm \citep[][Example 5.8]{hds-book},
    so $\log \N(\F, d_Z; \ve) \le m^2\log(1 + 2\sqrt{k_{\max}}/\ve)$
    for $\ve\le \sqrt{k_{\max}}$.
    
    Therefore, from Proposition \ref{prop:dudley}, we have
    \begin{align*}
        \mathcal{R}_Z(\F)
        &\le \frac{12}{\sqrt{\ell}}\int_0^{\sqrt{k_{\max}}} \sqrt{m^2 \log
        \left(1 + \frac{2\sqrt{k_{\max}}}{\ve}\right)}\dd\ve\\
        &=\frac{12m\sqrt{k_{\max}}}{\sqrt{\ell}}\int_0^1 \sqrt{\log
        \left(1 + \frac{2}{t}\right)}\dd t
        \le \frac{18m\sqrt{k_{\max}}}{\sqrt{\ell}},
    \end{align*}
    where we have used the estimate
    \[
        \int_0^1 \sqrt{\log
        \left(1 + \frac{2}{t}\right)}\dd t
        \le \int_0^1 \frac12\left(
            1 + \log\left(1 + \frac{2}{t}\right)
        \right)\dd t
        = \frac12 + \frac12\log\frac{27}4
        \le \frac32.
    \]
    
    Since we also have a bound $\lVert f_U\rVert_\infty \le \lVert U\rVert_2\sqrt{k_{\max}}$,
    we can use Proposition \ref{prop:ulln} to obtain
    \begin{align*}
        \mu(\lVert PQk(\cdot, x)\rVert_{\Hil_k}) - \mu_Z(\lVert PQk(\cdot, x)\rVert_{\Hil_k})
        \le \sup_{f\in\F}(\mu_Z(f) - \mu(f))
        \le \sqrt{\frac{k_{\max}}\ell}\left(36m + \sqrt{2\log\frac1\delta}\right)
    \end{align*}
    with probability at least $1-\delta$.
    So we have proven \eqref{eq:main-root-hp}.
    
    We next prove \eqref{eq:main-root-ex}
    by using Proposition \ref{prop:sh}.
    We have the same bound
    for $\log \N(\F, \lVert\cdot\rVert_{L^1}(\mu); \ve)$
    from the same argument as above,
    and so we especially get
    \[
        \log N\left(\F, \lVert\cdot\rVert_{L^1(\mu)}; \frac{\sqrt{k_{\max}}}{\ell}\right)
        \le m^2\log (1+2\ell).
    \]
    As $\lVert f\rVert_{L^\infty(\mu)}\le \sqrt{k_{\max}}=:F$ holds for all $f\in\F$,
    we can now apply Proposition \ref{prop:sh}
    with $\ve = F/\ell$ to obtain the desired conclusion.
\end{proof}

We next prove the following proposition that includes the desired assertion
by using Proposition~\ref{prop:aux}.
\begin{prop}\label{stronger}
    Let $Z=(z_i)_{i=1}^\ell$ be an $\ell$-point independent
    sample from $\mu$.
    Let $P$ be a random orthogonal projection in $\H_k$
    possibly depending on $Z$.
    For any integer $m\ge1$, with probability at least $1-\delta$,
    we have
    \[
        \int_\X \lVert Pk(\cdot, x)\rVert_{\Hil_k} \dd\mu(x)
        \le \frac1\ell\sum_{i=1}^\ell \lVert Pk(\cdot, z_i)\rVert_{\Hil_k}
        + \sqrt{\frac{k_{\max}}{\ell}}\left(36m + \sqrt{\frac92\log\frac2\delta}\right)
        + 3\sqrt{\sum_{j>m}\sigma_j}.
    \]
    Furthermore, in expectation, we have the following bound:
    \begin{align}
        \E{\int_\X \lVert Pk(\cdot, x)\rVert_{\Hil_k} \dd\mu(x)}
        &\le \E{\frac2\ell\sum_{i=1}^\ell \lVert Pk(\cdot, z_i)\rVert_{\Hil_k}}\nonumber\\
        &\quad + \frac{\sqrt{k_{\max}}}{\ell}\left(\frac{80m^2\log(1+2\ell)}9 + 69\right)
        + 4\sqrt{\sum_{j>m}\sigma_j}.
    \end{align}
\end{prop}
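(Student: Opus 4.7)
The plan is to reduce the statement to an application of Proposition~\ref{prop:aux} by choosing a suitable deterministic projection $Q$ and controlling the residual with Mercer's decomposition. Specifically, I would let $Q:\Hil_k\to\Hil_k$ be the orthogonal projection onto $\mathop\mathrm{span}\{\sqrt{\sigma_j}e_j : j\le m\}$ (the top-$m$ eigendirections of $\K$ in $\Hil_k$). Writing $k(\cdot,x)= Qk(\cdot,x) + Q^\perp k(\cdot,x)$ and using the triangle inequality plus the fact that $P$ is a contraction, one has
\[
  \lVert Pk(\cdot,x)\rVert_{\Hil_k}
  \le \lVert PQk(\cdot,x)\rVert_{\Hil_k} + \lVert Q^\perp k(\cdot,x)\rVert_{\Hil_k}
\]
pointwise in $x$, together with the matching lower bound obtained from the reverse triangle inequality. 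Applied to $\mu$ and $\mu_Z$ respectively, this isolates the $PQ$-term, which is now a projection composed with a \emph{deterministic} $m$-dimensional projection, so Proposition~\ref{prop:aux} applies directly.

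The second step is to identify the tail contribution. Since $(\sqrt{\sigma_j}e_j)_j$ is an ONB of $\Hil_k$, Mercer's theorem yields
\[
  \lVert Q^\perp k(\cdot,x)\rVert_{\Hil_k}^2 = \sum_{j>m}\sigma_j e_j(x)^2,
\]
so that $\mu\bigl(\lVert Q^\perp k(\cdot,\cdot)\rVert_{\Hil_k}^2\bigr)=\sum_{j>m}\sigma_j$ by orthonormality of $(e_j)$ in $L^2(\mu)$, and Jensen/Cauchy--Schwarz gives $\mu(\lVert Q^\perp k(\cdot,\cdot)\rVert_{\Hil_k})\le \sqrt{\sum_{j>m}\sigma_j}$. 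This handles the $\mu$-side without further randomness.

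For the high-probability version, I would then combine three ingredients with a union bound of total failure probability $\delta$: (i) Proposition~\ref{prop:aux} applied to the $PQ$-term with confidence $1-\delta/2$, yielding the factor $\sqrt{k_{\max}/\ell}\,(36m+\sqrt{2\log(2/\delta)})$; (ii) the deterministic $\mu$-side bound $\mu(\lVert Q^\perp k\rVert_{\Hil_k})\le \sqrt{\sum_{j>m}\sigma_j}$; and (iii) a Hoeffding bound on $\mu_Z(\lVert Q^\perp k\rVert_{\Hil_k})$ around its mean $\mu(\lVert Q^\perp k\rVert_{\Hil_k})$, valid since the integrand is bounded by $\sqrt{k_{\max}}$, which with confidence $1-\delta/2$ adds an extra $\sqrt{k_{\max}\log(2/\delta)/(2\ell)}$. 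Summing the two $\sqrt{\log(2/\delta)}$ contributions gives $(\sqrt{2}+\sqrt{1/2})\sqrt{\log(2/\delta)}=\sqrt{\tfrac{9}{2}\log(2/\delta)}$, which matches the claimed constant. For the expectation form, step (iii) becomes unnecessary because $\mathbb{E}[\mu_Z(\lVert Q^\perp k\rVert_{\Hil_k})]=\mu(\lVert Q^\perp k\rVert_{\Hil_k})\le\sqrt{\sum_{j>m}\sigma_j}$ by Fubini, and one invokes the expectation bound in Proposition~\ref{prop:aux} instead.

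The main obstacles are essentially bookkeeping: (a) making sure the triangle-inequality decomposition is applied on both sides in a compatible way so that the $\mu_Z(\lVert Pk(\cdot,z_i)\rVert_{\Hil_k})$ terms survive with the right sign, and (b) tracking the accumulated constants in front of $\sqrt{\sum_{j>m}\sigma_j}$, where each of the $\mu$-side residual, the $\mu_Z$-side residual (via Hoeffding or Fubini), and possibly an extra slack from Proposition~\ref{prop:aux}'s factor-$2$ on the empirical term contribute one copy, giving the stated $3$ (high-probability) and $4$ (expectation) coefficients.
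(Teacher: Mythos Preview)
Your proposal is correct and follows essentially the same route as the paper: take $Q=P_m$ the projection onto $\mathop\mathrm{span}\{\sqrt{\sigma_j}e_j:j\le m\}$, apply Proposition~\ref{prop:aux} to the $PQ$-part, and control the $Q^\perp$-tail via $\mu(\lVert Q^\perp k(\cdot,x)\rVert_{\H_k}^2)=\sum_{j>m}\sigma_j$ together with Hoeffding (high-probability) or Fubini (expectation). The only cosmetic difference is that the paper packages the decomposition through an auxiliary minorant $f_-(x)=\lVert PP_mk(\cdot,x)\rVert_{\H_k}-\lVert PP_m^\perp k(\cdot,x)\rVert_{\H_k}$ rather than your direct triangle inequalities; in fact your bookkeeping yields coefficients $2$ and $3$ (not $3$ and $4$) in front of $\sqrt{\sum_{j>m}\sigma_j}$, which is slightly sharper and a fortiori proves the stated bounds.
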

\begin{proof}
Note that we use the fact that for any projection operator $P$
$\lVert Pf\rVert \le \lVert f\rVert$ frequently within the proof.
For an $\ell$-point sample $Z = (z_1, \ldots, z_\ell) \subset \X$,
let us denote $\mu_Z$ be the mapping $f\mapsto \frac1\ell\sum_{i=1}^\ell f(z_i)$.
If we have $f_-,  f \in L^1(\mu)$ with $f_- \le f$,
we can generally obtain
\begin{align}
    \mu(f) - \mu_Z(f)
    &= (\mu(f) - \mu(f_-)) + (\mu(f_-) - \mu_Z(f_-))
     + (\mu_Z(f_-) - \mu_Z(f)) \nonumber\\
    &\le \mu(f - f_-) + (\mu(f_-) - \mu_Z(f_-)).
    \label{eq:main-root-1}
\end{align}
We here use $f(x) = \lVert Pk(\cdot, x)\rVert_{\Hil_k}$
and $f_-(x) = \lVert PP_mk(\cdot, x)\rVert_{\Hil_k} - 
\lVert PP_m^\perp k(\cdot, x)\rVert_{\Hil_k}$ for an $m$,
where $P_m$ is the projection operator onto $\mathop\mathrm{span}\{e_1,\ldots,e_m\}$
in $\H_k$
and $P_m^\perp$ is its orthogonal complement.
In this case,
$\mu(f - f_-)$ can easily be estimated by Cauchy--Schwarz as follows:
\begin{align}
    \mu(f - f_-)
    \le \mu(2\lVert PP_m^\perp k(\cdot, x)\rVert_{\Hil_k})
    &\le 2\mu(\lVert P_m^\perp k(\cdot, x)\rVert_{\Hil_k})\nonumber\\
    &\le 2\sqrt{\mu(\lVert P_m^\perp k(\cdot, x)\rVert_{\Hil_k}^2)}
    =2\sqrt{\sum_{j>m}\sigma_j},
    \label{eq:main-root-2}
\end{align}
where we have used the fact
\[
    \lVert P_m^\perp k(\cdot, x)\rVert_{\H_k}^2
    = \lVert k(\cdot, x)\rVert_{\H_k}^2
    - \lVert P_m k(\cdot, x)\rVert_{\H_k}^2
    = k(x, x) - \sum_{i=1}^m \sigma_ie_i(x)^2
    = \sum_{i=m+1}^\infty \sigma_ie_i(x)^2.
\]
We also bound $\mu(f_-) - \mu_Z(f_-)$ by
\begin{equation}
    \mu(f_-) - \mu_Z(f_-)
    \le \mu(\lVert PP_mk(\cdot, x)\rVert_{\Hil_k}) - \mu_Z(\lVert PP_mk(\cdot, x)\rVert_{\Hil_k})
    +\mu_Z(\lVert P_m^\perp k(\cdot, x)\rVert_{\Hil_k}),
    \label{eq:main-root-3}
\end{equation}
where
we have used the second inequality in \eqref{eq:main-root-2} for $\mu_Z$.
The last term $\mu_Z(\lVert P_m^\perp k(\cdot, x)\rVert_{\Hil_k})$ above
is estimated
either in expectation or in high probability as follows:
\begin{equation}
    \begin{cases}
        \text{$\displaystyle\E{\mu_Z(\lVert P_m^\perp k(\cdot, x)\rVert_{\Hil_k})}
        \le \sqrt{\sum_{j>m}\sigma_j}$.}\\
        \text{$\displaystyle\mu_Z(\lVert P_m^\perp k(\cdot, x)\rVert_{\Hil_k})
        \le \sqrt{\sum_{j>m}\sigma_j} + 
        \sqrt{\frac{k_{\max}}{2\ell}\log\frac1\delta}$
        with probability at least $1-\delta$.}
    \end{cases}
    \label{eq:main-root-4}
\end{equation}
The latter follows from a simple calculation of Hoeffing's inequality.

Thus, it suffices to derive a bound for
$\mu(\lVert PP_mk(\cdot, x)\rVert_{\Hil_k}) - \mu_Z(\lVert PP_mk(\cdot, x)\rVert_{\Hil_k})$
or its expectation;
we do it by letting $Q=P_m$ and $\hat{f}=f$ in Proposition \ref{prop:aux}.
By combining (just summing up) the inequalities \eqref{eq:main-root-1}--\eqref{eq:main-root-4},
and \eqref{eq:main-root-hp}, 
we obtain the desired inequality in high probability. 
For the result in expectation,
we first combine the inequalities
\eqref{eq:main-root-1}--\eqref{eq:main-root-4},
and \eqref{eq:main-root-ex} to get the bound
\[
    \E{\mu(f)} - \E{\mu_Z(f)}
    \le
    \E{\mu_Z(\lVert PP_m k(\cdot, x)\rVert_{\H_k})}
    +\frac{\sqrt{k_{\max}}}{\ell}\left(\frac{80m^2\log(1+2\ell)}9 + 69\right)
    + 3\sqrt{\sum_{j>m}\sigma_j}
\]
(recall $f(x) = \lVert P k(\cdot, x)\rVert_{\H_k}$).
Since we can also estimate $\E{\mu_Z(\lVert PP_m k(\cdot, x)\rVert_{\H_k})}$
as
\begin{align*}
    \E{\mu_Z(\lVert PP_m k(\cdot, x)\rVert_{\H_k})}
    &\le \E{\mu_Z(\lVert P k(\cdot, x)\rVert_{\H_k})}
    + \E{\mu_Z(\lVert PP_m^\perp k(\cdot, x)\rVert_{\H_k})}\\
    &\le \E{\mu_Z(\lVert P k(\cdot, x)\rVert_{\H_k})}
    + \sqrt{\sum_{j>m}\sigma_j},
\end{align*}
we obtain the desired conclusion.
\end{proof}

\subsection{Proof of Remark \ref{rem:result}}\label{sec:proof-rem}
\begin{proof}
    We assume $\ell\ge 3$ here.
    Let $F(x):= - \beta^{-1}x^{1-1/d}\exp(-\beta x^{1/d})$.
    If $d\ge 2$, its derivative is
    \[
        F^\prime(x) = \exp(-\beta x^{1/d})  - \frac{1-1/d}\beta x^{-1/d}\exp(-\beta x^{1/d})
        = \left(1 - \frac{1-1/d}\beta x^{-1/d}\right)\exp(-\beta x^{1/d}).
    \]
    Thus, if $x\ge (\log\ell)^d/\beta^d$, we have $F^\prime(x) \ge d\exp(-\beta x^{1/d})$. This inequality is still true if $d=1$.
    By taking $m=\lfloor (2\log \ell)^d /\beta^d \rfloor$, we obtain
    \[
        \sum_{i>m}\sigma_i
        \lesssim \int_{2(\log\ell)^d / \beta^d}^\infty \exp(- \beta x^{1/d})\dd x
        \le - d F(2(\log \ell)^d / \beta^d)
        = \frac{2^{d-1}d}{\beta^d}\cdot \frac{(\log \ell)^{d-1}}{\ell^2}.
    \]
    Therefore, this choice of $m$ satisfies
    \[
        \sqrt{\sum_{i>m}\sigma_i} = \ord{\frac{(\log\ell)^{(d-1)/2}}\ell},
        \qquad
        m^2 = \ord{(\log\ell)^{2d}}.
    \]
    Combining these with the inequality in Corollary~\ref{cor:wce-iid} gives the desired estimate.
\end{proof}

\subsection{Proof of Proposition \ref{prop:wce-decomp}}
\begin{proof}
    We basically just compute the trace of the operator $P_Z^\perp\K$.
    Indeed, we have
    \begin{equation}
        \int_\X \lVert P_Z^\perp k(\cdot, x) \rVert_{\H_k}^2
        = \int_\X (k(x, x) - k^Z(x, x))\dd\mu(x),
        \label{eq:prop-tr-1}
    \end{equation}
    and, from \eqref{eq:mu-mercer}, we also have the following identity:
    \begin{equation}
        \int_\X k(x, x)\dd\mu(x) = \sum_{i=1}^\infty \ip{e_i, \K e_i}_{L^2(\mu)}.
        \label{eq:prop-tr-2}
    \end{equation}
    For $k^Z$,
    as we can write $k^Z(x, y) = \sum_{i=1}^\ell g_i(x)g_i(y)$
    by using $g_i\in L^2(\mu)$
    (see e.g., \eqref{eq:explicit-k-z}),
    we can also have
    \begin{equation}
        \int_\X k^Z(x, x)\dd\mu(x)
        = \sum_{i\in I} \ip{e_i, \K^Z e_i}_{L^2(\mu)}
        = \sum_{i=1}^\infty \ip{e_i, \K^Z e_i}_{L^2(\mu)},
        \label{eq:prop-tr-3}
    \end{equation}
    where $\K^Z:L^2(\mu)\to L^2(\mu)$ is the integral operator given by
    $g\mapsto \int_\X k^Z(\cdot, x)g(x)\dd\mu(x)$,
    and $(e_i)_{i\in I}$ is an orthonormal basis of $L^2(\mu)$
    including $(e_i)_{i=1}^\infty$.
    The second equality follows from the fact that
    $\K - \K^Z$ is a (semi-)positive definite operator since $k-k^Z$ is a positive definite kernel,
    and so we have $0\le \ip{e_i, \K^Z e_i}_{L^2(\mu)} \le \ip{e_i, \K e_i}_{L^2(\mu)} = 0$
    for any $i\in I \setminus\mathbb{Z}_{>0}$.
    For this integral operator,
    since we have $k^Z(\cdot, x) = P_Z k(\cdot, x)$,
    we can prove
    \[
        \K^Z g = \int_\X P_Z k(\cdot, x)g(x)\dd\mu(x)
        = P_Z \int_\X k(\cdot, x)g(x)\dd\mu(x)
        = P_Z \K g
    \]
    for any $g\in L^2(\mu)$ under the well-definedness of $\K$.
    Thus, from \eqref{eq:prop-tr-1}--\eqref{eq:prop-tr-3}, we have
    \begin{equation}
        \int_\X \lVert P_Z^\perp k(\cdot, x) \rVert_{\H_k}^2
        = \sum_{i=1}^\infty \ip{e_i, (\K - \K^Z)e_i}_{L^2(\mu)}
        = \sum_{i=1}^\infty \ip{e_i, P_Z^\perp \K e_i}_{L^2(\mu)}.
        \label{eq:prop-tr-4}
    \end{equation}

    For general $f\in\H_k$ and $g\in L^2(\mu)$, we can prove
    \[
        \ip{f, \K g}_{\H_k}
        = \ip{f, \int_\X k(\cdot, x)g(x)\dd\mu(x)}_{\H_k}
        = \int_\X \ip{f, k(\cdot, x)}_{\H_k} g(x)\dd\mu(x)
        = \ip{f, g}_{L^2(\mu)},
    \]
    so that in particular
    \[
        \ip{g, P_Z^\perp \K g}_{L^2(\mu)}
        = \ip{\K g, P_Z^\perp \K g}_{\H_k}
        = \lVert P_Z^\perp \K g \rVert_{\H_k}^2.
    \]
    By letting $g=e_i$ in the above equation,
    we can deduce the desired equality from \eqref{eq:prop-tr-4}.
    For the inequality, use the bound
    \[
        \lVert P_Z^\perp \K e_i\rVert_{\H_k}^2
        \le \lVert \K e_i\rVert_{\H_k}^2 = \lVert \sigma_i e_i\rVert_{\H_k}^2
        = \sigma_i \lVert \sqrt{\sigma_i} e_i\rVert_{\H_k}^2 = \sigma_i
    \]
    for each $i>m$.
\end{proof}

\subsection{Proof of Corollary \ref{cor:dpp}}
\begin{proof}
    From Proposition \ref{prop:wce-decomp}
    and \eqref{eq:weighted-wce},
    it suffices to prove for an arbitrary $g\in L^2(\mu)$ that
    \[
        \lVert P_Z^\perp \K g\rVert_{\H_k}^2
        = \inf_{w_i}\sup_{\lVert f\rVert_{\H_k}\le1}\left\lvert \mu(fg)-
        \sum_{i=1}^\ell w_if(z_i)\right\rvert^2
        \le 4\sum_{i>\ell}\sigma_i.
    \]
    It is indeed an immediate consequence of \citet[][Theorem 4]{bel21}.
\end{proof}

\subsection{Proof of Lemma \ref{rem:square-kernel}}
\begin{proof}
    Given the Mercer decomposition
    $k(x, y) = \sum_{i=1}^\infty \sigma_i e_i(x)e_i(y)$, we can compute
    \begin{align*}
        h_\mu(x, y)
        &=\int_\X k(x, t)k(t, y)\dd\mu(t)\\
        &=\sum_{i,j=1}^\infty \sigma_i\sigma_j e_i(x)e_j(y)\int_\X e_i(t)e_i(t)\dd\mu(t)\\
        &=\sum_{i,j=1}^\infty \delta_{ij} \sigma_i\sigma_j e_i(x)e_j(y)
        =\sum_{i=1}^\infty \sigma_i^2 e_i(x)e_i(y),
    \end{align*}
    where we have used the fact
    that $(e_i)_{i=1}^\infty$ is an orthonormal set in $L^2(\mu)$.
\end{proof}

\subsection{Proof of Lemma \ref{lem:or-eig}}
\begin{proof}
    From \eqref{eq:ip-z}, we have
    \begin{equation}
        \ip{f_i, f_j}_{L^2(\mu)}
        = v_i^\top (H^+)^\top H^\top H H^+ v_j
        = (HH^+ v_i)^\top (HH^+ v_j).
        \label{eq:k-z-ortho}
    \end{equation}
    Here, note that $\{v_i,\, \kappa_i>0\}\subset (\ker H^\top)^\perp$ as we have,
    for any $v\in\ker H^\top$,
    \[
        0 = v^\top Hk(Z, Z)^+H^\top v
        = \sum_{i=1}^\ell \kappa_i v^\top v_i v_i^\top v
        = \sum_{i=1}^\ell \kappa_i (v^\top v_i)^2.
    \]
    Therefore, $HH^+ v_i = v_i$ if $\kappa_i>0$
    since $HH^+$ is the projection onto $(\ker H^\top)^\perp$,
    and so $\{f_i,\, \kappa_i>0\}$ is orthonormal from \eqref{eq:k-z-ortho}.
    We can also see that $f_i = (H^+v_i)^\top k(Z, \cdot)$
    is an eigenfunction of $\K^Z$
    from the remark below \eqref{eq:k-z} and $HH^+v_i=v_i$.
\end{proof}

\subsection{Proof of Proposition \ref{prop:ae}}
\begin{proof}
    We rewrite $k_\mu^Z$ in terms of another summation as follows:
    \begin{align}
        k^Z_\mu(x, y)
        &:= \sum_{i=1}^\ell \kappa_i f_i(x)f_i(y) \nonumber\\
        &= k(x, Z)H^+ \left(\sum_{i=1}^\ell \kappa_i v_iv_i^\top\right) (H^\top)^+k(Z, y)
        \nonumber\\
        &= k(x, Z)H^+ H k(Z, Z)^+ H^\top (H^\top)^+k(Z, y)\nonumber\\
        &= \sum_{\lambda_i >0} \frac1{\lambda_i}
        u_i^\top H^\top (H^+)^\top k(Z, x) k(y, Z) H^+ H u_i,
        \label{eq:def-k-z-mu}
    \end{align}
    where $(\lambda_i, u_i)$ are eigenpairs of $k(Z, Z)$.
    Recall also that we have
    \begin{equation}
        k^Z(x, y) = k(x, Z)k(Z, Z)^+k(Z, y)
        =\sum_{\lambda_i>0}\frac1{\lambda_i}u_i^\top k(Z, x)k(y, Z)u_i.
        \label{eq:express-k-z}
    \end{equation}
    From \eqref{eq:def-k-z-mu} and this,
    it suffices to prove $u^\top k(Z, \cdot)
    = u^\top H^\top (H^+)^\top k(Z, \cdot)$
    in $L^2(\mu)$
    for any $u \in\R^\ell$.
    Indeed,
    we have
    \begin{align*}
        &\int_\X \left(u^\top k(Z, x) - u^\top H^\top (H^+)^\top k(Z, x) \right)^2\dd\mu(x)\\
        &=\int_\X \left(u^\top \left(I_\ell - H^\top (H^+)^\top \right) k(Z, x)\right)^2\dd\mu(x)\\
        &=u^\top \left(I_\ell - H^\top (H^+)^\top \right)
        \left(\int_\X k(Z, x)k(x, Z)\dd\mu(x)\right)
        (I_\ell - H^+H)u\\
        &=u^\top \left(I_\ell - H^\top (H^+)^\top \right)
        H^\top H
        (I_\ell - H^+H)u=0
    \end{align*}
    since
    $H^\top (H^+)^\top H^\top = H^\top$
    and $HH^+H=H$ hold ($I_\ell$ is the identity matrix).
    Thus, we obtain the desired assertion.

    Finally, we prove that $k_\mu^Z$ and $k^Z$ coincide when
    $\ker h_\mu(Z, Z) \subset \ker k(Z, Z)$.
    From \eqref{eq:def-k-z-mu} and \eqref{eq:express-k-z},
    it suffices to prove $H^+H u_i = u_i$ for indices $i$ with $\lambda_i>0$.
    Note that $H^+H$ is the orthogonal projection onto the orthogonal complement
    of $\ker H = \ker H^\top H = h_\mu(Z, Z)$ from a general property of the pseudo-inverse.
    Since $u_i$ is an eigenvector of $k(Z, Z)$ with a positive eigenvalue $\lambda_i$,
    it is orthogonal to any $v\in\ker k(Z, Z)$
    (as $u_i^\top v = \lambda_i^{-1} u_i^\top k(Z, Z) v = 0$).
    Therefore, if we have $\ker h_\mu(Z, Z) \subset \ker k(Z, Z)$,
    $u_i$ is also orthogonal to $\ker h_\mu(Z, Z)$
    and so $H^+H u_i = u_i$ as desired.
\end{proof}

\subsection{Proof of Proposition \ref{prop:emp-eig-decay-x}}
First, we give a proof for a folklore property of
products of positive semi-definite matrices.
\begin{lem}
    Let $\ell, m\ge n$ be positive integers
    and $A, B\in\R^{n\times n}$ be (symmetric) positive semi-definite matrices.
    Assume $B = C^\top C = D^\top D$ for a real matrix $C\in\R^{m\times n}$ and
    $D\in\R^{\ell\times n}$.
    Then, $CAC^\top$ and $DAD^\top$ have the same set of nonzero eigenvalues with the same multiplicity (in terms of real eigenvectors).
\end{lem}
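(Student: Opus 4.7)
The plan is to reduce both $CAC^\top$ and $DAD^\top$ to a common object, namely the matrix $AB = AC^\top C = AD^\top D$, by invoking the classical fact that for any rectangular real matrices $M\in\R^{p\times q}$ and $N\in\R^{q\times p}$, the products $MN$ and $NM$ share the same set of nonzero eigenvalues with the same (geometric, hence algebraic) multiplicities. This lets one compare $CAC^\top$ (an $m\times m$ matrix) and $DAD^\top$ (an $\ell\times\ell$ matrix) through an intermediate $n\times n$ matrix that does not depend on the choice of factorization.

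First, I would apply the auxiliary fact with $M = C\in\R^{m\times n}$ and $N = AC^\top\in\R^{n\times m}$, so that $MN = CAC^\top$ while $NM = AC^\top C = AB$; this gives that $CAC^\top$ and $AB$ have the same nonzero spectrum with identical multiplicities. Then, repeating the argument with $M = D$ and $N = AD^\top$ shows that $DAD^\top$ and $AD^\top D = AB$ share the same nonzero spectrum with multiplicities. Chaining these two statements through the common intermediate $AB$ yields the lemma.

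To justify the auxiliary fact, I would argue directly: if $MNv = \lambda v$ with $\lambda\ne0$ and $v\ne0$, then $Nv\ne0$ (otherwise $\lambda v = MNv = 0$), and $NM(Nv) = N(MNv) = \lambda(Nv)$, so $Nv$ is an eigenvector of $NM$ for the same $\lambda$. Moreover, $v\mapsto Nv$ restricted to the $\lambda$-eigenspace of $MN$ is injective, since $Nv=0$ forces $\lambda v = 0$; swapping the roles of $M,N$ gives the matching bound in the opposite direction, so the eigenspaces for each nonzero $\lambda$ have equal dimension. Because $CAC^\top$ and $DAD^\top$ are both real symmetric and positive semi-definite (as $A\succeq 0$ implies $v^\top CAC^\top v = (C^\top v)^\top A(C^\top v)\ge 0$, and similarly for $D$), geometric and algebraic multiplicities coincide, so there is no subtlety in what ``multiplicity'' means. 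I do not anticipate any real obstacle here; the entire argument is a short invocation of a standard linear-algebra identity, and the only mild care needed is to track dimensions so that the products $MN$ and $NM$ in each application actually evaluate to $CAC^\top$ vs.\ $AB$ and $DAD^\top$ vs.\ $AB$.
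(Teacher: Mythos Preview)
Your proposal is correct and follows essentially the same route as the paper: both arguments pass through the intermediate matrix $AB$ and use the eigenvector correspondence between $MN$ and $NM$ for nonzero eigenvalues (the paper instantiates this directly with the map $v\mapsto Cv$ from $S_\lambda(AB)$ to $S_\lambda(CAC^\top)$ and proves it is a bijection, while you state the $MN$--$NM$ fact abstractly and apply it with $M=C$, $N=AC^\top$). The only cosmetic difference is that the paper shows injectivity and surjectivity of a single map, whereas you obtain equality of dimensions by running the injection in both directions; the content is the same.
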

\begin{proof}
    For a real square matrix $M\in\mathbb{R}^{j\times j}$ and a real number $\lambda$,
    let us define $S_\lambda(M):=\{v\in\R^j\mid Mv = \lambda v\}$
    be the real eigenspace of $M$ corresponding to $\lambda$.

    We shall prove there is a bijection between $S_\lambda(AB)$ and $S_\lambda(CAC^\top)$
    for each real $\lambda\ne0$ (and the same for $S_\lambda(DAD^\top)$ by symmetry).
    Once we establish this,
    we see that each $\lambda\ne0$ has the same multiplicity as an eigenvalue of
    $CAC^\top$ and $DAD^\top$
    (multiplicity can be zero; in that case $\lambda$ is not an eigenvalue),
    and the desired assertion follows.

    Let us fix $\lambda\ne0$.
    If $v\in S_\lambda(CAC^\top)$, we have
    $CAC^\top (Cv) = CABv = \lambda (Cv)$,
    so $Cv\in S_\lambda(CAC^\top)$.
    We also have $Cv^\prime \ne Cv$ for another element $(v\ne) v^\prime\in S_\lambda(AB)$
    since $AC^\top(Cv^\prime - Cv) = AB(v^\prime - v) = \lambda (v^\prime -v) \ne 0$.
    Thus, matrix multiplication by
    $C$ is an injective map from $S_\lambda(AB)$ to $S_\lambda(CAC^\top)$.

    Let us finally prove $S_\lambda(AB)\ni v \mapsto Cv \in S_\lambda(CAC^\top)$ is surjective.
    Let $u\in S_\lambda(CAC^\top)$.
    Then, $u = \lambda^{-1} (\lambda u) = \lambda^{-1} CAC^\top u = C (\lambda^{-1}AC^\top u)$,
    so we can write $u = Cv$ for $v = \lambda^{-1}AC^\top u$.
    It remains to prove $v\in S_\lambda(AB)$,
    but we can see it as follows:
    \[
        ABv = AB\left(\frac1\lambda AC^\top u\right)
        =\frac1\lambda (AC^\top C)AC^\top u
        =\frac1\lambda AC^\top (CAC^\top u)
        =\frac1\lambda AC^\top (\lambda u)
        =\lambda v.
    \]
    Therefore, we have a bijection between $S_\lambda(AB)$ and $S_\lambda(CAC^\top)$
    and we are done.
\end{proof}

Recall $\mu(k_\mu^Z - k_{s,\mu}^Z)
\le \sum_{i=s+1}^\ell\kappa_i$ holds
for eigenvalues $\kappa_1\ge\cdots\kappa_\ell\ge0$ of  $H_\mu k(Z, Z)^+H_\mu^\top$
with $H_\mu^\top H_\mu = h_\mu(Z, Z)$
(that immediately follows from the definitions
of $k_\mu^Z$ and $k_{s,\mu}^Z$, and that $f_i$ are $L^2(\mu)$-orthonormal).
By replacing $\mu$ with $\mu_X$,
we have $\mu_X(k_X^Z - k_{s, X}^Z) \le \sum_{i=s+1}^\ell \kappa_i^X$
for eigenvalues of $\kappa_1^X \ge \cdots \ge \kappa_\ell^X\ge 0$ of $H_X k(Z, Z)^+H_X^\top$,
where $H_X^\top H_X = h_X(Z, Z) = \frac1M k(Z, X)k(X, Z)$.

By using the lemma, we can see that
$\kappa_i^X$ are actually the same as the eigenvalues of
$\frac1M k(X, Z)k(Z, Z)^+k(Z, X) = \frac1M k^Z(X, X)$.
As $k - k^Z$ is a positive definite kernel,
$k(X, X) - k^Z(X, X)$ is a positive semi-definite matrix,
the $i$-th largest eigenvalue of $k^Z(X, X)$
is bounded by the $i$-th largest eigenvalue of $k(X, X)$ (Weyl's inequality).

Now, let $\lambda_1^X \ge \lambda_2^X \ge \cdots \ge 0$
be the eigenvalues of $k(X, X)$.
From the above argument,
we have
\[
    \mu_X(k_X^Z - k_{s, X}^Z)
    \le \sum_{i=s+1}^\ell \kappa_i^X
    \le \frac1M\sum_{i=s+1}^\ell \lambda_i^X
    \le \frac1M\sum_{i=s+1}^M \lambda_i^X.
\]
Notice that we can apply Lemma \ref{lem:trace} with $X$ instead of $Z$,
and obtain $\E{\mu_X(k_X^Z - k^Z_{s, X})} \le \sum_{i>s} \sigma_i$
as desired.

\subsection{Proof of Proposition \ref{thm:k-s-x}}

\begin{proof}
    Fix a sample $X$ with $\ker k(X, Z)\subset \ker k(Z, Z)$
    and let us use the same notation as in $\mu$,
    i.e.,
    \begin{itemize}
        \item $H^\top H = h_X(Z, Z) = \frac1Mk(Z, X)k(X, Z)$;
        \item $Hk(Z, Z)^+H^\top = V\mathop\mathrm{diag}(\kappa_1, \ldots, \kappa_\ell) V^\top$
                with $\kappa_1\ge\cdots\kappa_\ell\ge0$
                and $V$ being orthogonal;
        \item $f_i = (H^+v_i)^\top k(Z, \cdot)$
        and $k_X^Z(x, y) = \sum_{i=1}^\ell \kappa_i f_i(x)f_i(y)$.
    \end{itemize}

    In this case, from the same argument as the last paragraph
    in the proof of Proposition \ref{prop:ae},
    we have $H^+H$ is an identity map over $(\ker h_X(Z, Z))^\perp
    = (\ker k(X, Z))^\perp \supset (\ker k(Z, Z))^\perp$.
    By considering the SVD of $k(Z, Z)$, we see that $(\ker k(Z, Z))^\perp$
    is exactly the linear subspace of $\R^\ell$ spanned by eigenvectors of $k(Z, Z)$
    with nonzero eigenvalues, which is equal to $\{k(Z, Z)v \mid v\in\R^\ell\}
    = \{k(Z, Z)^+v \mid v\in\R^\ell\}$.
    In particular, we have $H^+H k(Z, Z)^+ = k(Z, Z)^+$.

    We now prove that $\{\sqrt{\kappa_i} f_i \mid i\ge 1,\, \kappa_i>0\}$ actually forms
    an orthonoramal set in $\H_k$.
    Indeed, if $\kappa_i, \kappa_j>0$,
    we have
    \begin{align*}
        \ip{\sqrt{\kappa_i}f_i, \sqrt{\kappa_j}f_j}_{\H_k}
        &= \sqrt{\kappa_i\kappa_j} v_i^\top (H^+)^\top k(Z, Z) H^+ v_j\\
        &= \frac1{\sqrt{\kappa_i\kappa_j}}
            v_i^\top \left[ H k(Z, Z)^+ H^\top \right] (H^+)^\top k(Z, Z) H^+
            \left[ H k(Z, Z)^+ H^\top \right] v_j\\
        &= \frac1{\sqrt{\kappa_i\kappa_j}} v_i^\top H k(Z, Z)^+ k(Z, Z) k(Z, Z)^+ H^\top v_j\\
        &= \frac1{\sqrt{\kappa_i\kappa_j}} v_i^\top H k(Z, Z)^+ H^\top v_j
        = \delta_{ij},
    \end{align*}
    where we have used the fact that $v_i$ and $v_j$ are eigenvectors of $H k(Z, Z)^+ H^\top$
    with eigenvalues $\kappa_i$ and $\kappa_j$, respectively.

    Let $P:\H_k\to\H_k$ be the orthogonal projection onto
    $\mathop\mathrm{span}\{\sqrt{\kappa_i}f_i \mid i>s,\, \kappa_i>0\}$.
    Then, we have
    \begin{align*}
        Pk(\cdot, x) = \sum_{i=s+1}^\ell
        \ip{\sqrt{\kappa_i}f_i, k(\cdot, x)}_{\H_k}\sqrt{\kappa_i}f_i
        = \sum_{i=s+1}^\ell
        \sqrt{\kappa_i}f_i(x)\sqrt{\kappa_i}f_i,
    \end{align*}
    and so $\lVert Pk(\cdot, x)\rVert_{\H_k}^2
    = \sum_{i=s+1}^\ell \kappa_if_i(x)^2 = k^Z(x, x) - k_{s, X}^Z(x, x)$.
    Note that the projection $P$ is a random operator depending on the sample $X$.
    Now, we can use Theorem \ref{thm:low-dim-proj} with
    the empirical measure given by $X$ instead of $Z$ to obtain
    \begin{equation}
        \E{\mu(\sqrt{k^Z - k^Z_{s, X}})}
        \le 2\E{\mu_X(\sqrt{k_X^Z - k^Z_{s, X}})}
        + 4\sqrt{\sum_{i>m}\sigma_i} 
        + \frac{\sqrt{k_{\max}}}{M}\left(\frac{80m^2\log(1+2M)}9 + 69\right).
        \label{eq:final-k-s-x}
    \end{equation}    
    for any integer $m\ge1$,
    where we have used $\lVert Pk(\cdot, x)\rVert_{\H_k} = \sqrt{k^Z(x, x) - k^Z_{s, X}(x, x)}
    = \sqrt{k^Z_X(x, x) - k^Z_{s, X}(x, x)}$ almost surely.
    From Proposition \ref{prop:emp-eig-decay-x},
    we have
    \[
        \E{\mu_X(\sqrt{k_X^Z - k^Z_{s, X}})}^2
        \le \E{\mu_X(\sqrt{k_X^Z - k^Z_{s, X}})^2}
        \le \E{\mu_X(k_X^Z - k^Z_{s, X})}
        \le \sum_{i>s} \sigma_i,
    \]
    and combining it with \eqref{eq:final-k-s-x} leads to the desired conclusion.
\end{proof}

\subsection{Proof of Theorem \ref{thm:main-kq-emp}}

\begin{proof}
    We first prove the result for $Q_n = \mathrm{KQuad(k_{s, Y}, Y)}$.
    Since $k(x, x) \ge k^Z(x, x) = k^Z_Y(x, x) \ge k_{s, Z}^Y(x, x)$
    for $x\in Y$ from Proposition \ref{prop:discrete},
    we have
    \[
        \mu_Y(\sqrt{k - k_{s,Y}^Z})
        \le \mu_Y(\sqrt{k - k^Z}) + \mu_Y(\sqrt{k_Y^Z - k_{s, Y}^Z}).
    \]
    From Proposition \ref{prop:emp-eig-decay-x},
    by taking the expectation with regard to $Y$,
    we have
    \[
        \E{\mu_Y(\sqrt{k_Y^Z - k_{s, Y}^Z})}
        \le \sqrt{\E{\mu_Y(k_Y^Z - k_{s, Y}^Z)}}
        \le \sqrt{\sum_{i>s}\sigma_i},
    \]
    and so we obtain
    \[
        \E{\mu_Y(\sqrt{k - k_{s, Y}^Z})}
        \le \mu(\sqrt{k - k^Z}) + \sqrt{\sum_{i>s}\sigma_i}
    \]
    By combining it with \eqref{eq:estimate-empirical},
    it is now sufficient to show
    $\E{\mmd_k(\mu_Y, \mu)} \le \sqrt{c_{k,\mu}/N}$,
    but actually it follows from the identity
    $\E{\mmd_k(\mu_Y, \mu)^2} = c_{k, \mu}/N$,
    which can be shown by a straightforward calculation
    \citep[see, e.g.,][Proof of Theorem 7]{hayakawa21b}.

    In the case of $Q_n = \mathrm{KQuad}(k_{s,\mu}^Z, Y)$,
    we instead have the decomposition
    \[
        \mu_Y(\sqrt{k - k_{s,\mu}^Z})
        \le \mu_Y(\sqrt{k - k^Z}) + \mu_Y(\sqrt{k_\mu^Z - k_{s, \mu}^Z});
    \]
    Theorem~\ref{thm:k-s-mu-z-decay}
    yields the desired estimate for expectation.
\end{proof}

\end{document}